\documentclass[aop, preprint, 12pt]{imsart}

\RequirePackage[OT1]{fontenc}
\RequirePackage{amsthm,amsmath}
\RequirePackage[numbers]{natbib}
\RequirePackage[colorlinks,citecolor=blue,urlcolor=blue]{hyperref}
\usepackage{mathrsfs}
\usepackage{amsfonts}
\usepackage{color}
\usepackage{pifont}
\usepackage{amssymb}
\setlength{\parindent}{0mm}
\usepackage[english]{babel}
\usepackage[T1]{fontenc}
\usepackage{graphicx}
\usepackage{subfigure}
\usepackage{latexsym}
\usepackage{amstext}
\usepackage{mathrsfs}
\usepackage{hyperref}
\usepackage{dsfont}
\usepackage{graphics}
\usepackage{enumerate}
\usepackage{paralist}
\usepackage{color}
\usepackage{fullpage}
\newtheorem{prop}{Proposition}[section]

\newtheorem{lem}[prop]{Lemma}

\numberwithin{equation}{section}

\newcommand{\beq}{\begin{eqnarray}}
\newcommand{\beqq}{\begin{eqnarray*}}
\newcommand{\eeq}{\end{eqnarray}}
\newcommand{\eeqq}{\end{eqnarray*}}

%



\usepackage{mathtools}
\usepackage[english]{babel}
\usepackage[utf8]{inputenc}
\usepackage{amsmath}
\usepackage{graphicx}
\usepackage[colorinlistoftodos]{todonotes}
\usepackage{polynom}
\usepackage{amsfonts}
\usepackage{dsfont}
\usepackage{amsthm}
\usepackage{hyperref}
\usepackage{graphicx}
\setlength{\oddsidemargin}{0pt}
\setlength{\textwidth}{470pt}
\setlength{\marginparsep}{0pt} \setlength{\marginparwidth}{60pt}
\setlength{\topmargin}{20pt} \setlength{\headheight}{0pt}
\setlength{\headsep}{0pt} \setlength{\textheight}{640pt}
\setlength{\footskip}{20pt}
\newtheorem{theorem}{Theorem}[section]

\newtheorem{corollary}[theorem]{Corollary}

\usepackage{fullpage}
\usepackage[T1]{fontenc}
\usepackage{hyperref}
\usepackage{xcolor}
\definecolor{link-color}{rgb}{0.15,0.4,0.15}
\hypersetup{
  colorlinks,
  linkcolor = {link-color}, citecolor = {link-color},
}

\usepackage{graphicx}
\usepackage{hyperref}
\usepackage{amsmath,amsthm,amssymb}
\usepackage{bbm}
\usepackage{tabularx}
\usepackage{tikz}




\newcommand{\N}{\mathbb{N}}
\renewcommand{\P}{\mathbb{P}}
\newcommand{\E}{\mathbb{E}}





    \def\d{{\textnormal d}}





\newenvironment{eqnarr}{\begin{IEEEeqnarray}{rCl}}{\end{IEEEeqnarray}\ignorespacesafterend}

\renewcommand{\eqref}[1]{\hyperref[#1]{(\ref*{#1})}}












\newcommand*{\norm}[1]{\lVert #1 \rVert}





    \def\beq{\begin{eqnarr}}
    \def\eeq{\end{eqnarr}}
    \def\beqq{\begin{eqnarray*}} 
    \def\eeqq{\end{eqnarray*}} 

        \def\d{{\rm d}}

    \def\d{{\textnormal d}}
\newtheorem{remark}{Remark}[section]


\newcommand*{\pref}[1]{\hyperref[#1]{(\ref*{#1})}}
\newcommand*{\refpref}[2]{\hyperref[#2]{\ref*{#1}(\ref*{#2})}}




\everymath{\displaystyle}

\makeatletter
\def\namedlabel#1#2{\begingroup
    #2%
    \def\@currentlabel{#2}%
    \phantomsection\label{#1}\endgroup
}
\makeatother

  \newcommand{\D}{{\rm d}}


\newcommand{\U}{\texttt U}

\newcommand{\tk}{\texttt k}
\newcommand{\etk}{\emph{\texttt k}}

\newcommand{\Q}{\texttt Q}

\newcommand{\bP}{\texttt P}
\newcommand{\ebP}{\emph{\texttt P}}

\newcommand{\bS}{{\texttt S}}

\newcommand{\bF}{{\texttt F}}

\newcommand{\bL}{{\texttt L}}


\startlocaldefs
\numberwithin{equation}{section}
\theoremstyle{plain}

\endlocaldefs

\begin{document}

\begin{frontmatter}
\title{Stochastic Methods for the Neutron  Transport Equation II: Almost sure growth}

\runtitle{Recurrent extension for ssMp in a Wedge}

\begin{aug}
\author{\fnms{Simon C. Harris}\thanksref{t3}\ead[label=e2]{simon.harris@auckland.ac.nz}},
\author{\fnms{Emma Horton}\thanksref{t1}\ead[label=e5]{emma.horton94@gmail.com}},
 \author{\fnms{Andreas E. Kyprianou}\thanksref{t3}\ead[label=e3]{a.kyprianou@bath.ac.uk}}

\thankstext{t1}{This work was conducted whilst Emma Horton was supported by a PhD scholarship from industrial partner Wood (formerly Amec Foster Wheeler) at the University of Bath.}

\thankstext{t3}{Supported by EPSRC grant EP/P009220/1.}



\address{
E. L. Horton,\\
Institut \'Elie Cartan de Lorraine \\
Universit\'e de Lorraine\\
54506 Vandoeuvre-l\`es-Nancy Cedex, \\
France.\\
\printead{e5}
}

\address{
A.E. Kyprianou \\
Department of Mathematical Sciences \\
University of Bath\\
Bath, BA2 7AY\\
 UK.\\
\printead{e3}\\
}

\address{S. C. Harris\\
Department of Statistics\\
University of Auckland\\
Private Bag 92019\\
Auckland 1142\\
New Zealand\\
\printead{e2}
}

\end{aug}

\begin{abstract}\hspace{0.1cm}
The neutron transport equation (NTE) describes the flux of neutrons across a planar cross-section  in an inhomogeneous fissile medium when the process of nuclear fission is active. Classical work on the NTE emerges from the applied mathematics literature in the 1950s through the work of R. Dautray and collaborators, \cite{D, DL6, M-K}. The NTE also has a probabilistic representation through the semigroup of the underlying {\it physical process} when envisaged as a stochastic process; cf. \cite{D, LPS, MT, MWY}.  More recently, \cite{MultiNTE} and \cite{SNTE} have continued the probabilistic analysis of the NTE, introducing more recent ideas from the theory of spatial branching processes and quasi-stationary distributions. In this paper, we continue in the same vein and look at a fundamental description of stochastic growth in the supercritical regime. Our main result provides a significant improvement on the last known contribution to growth properties of the physical process in  \cite{MWY}, bringing neutron transport theory in line with modern branching process theory such as \cite{HHK, EKW}. An important aspect of the proofs focuses on the use of a skeletal path decomposition, which we derive for general branching particle systems in the new context of non-local branching generators.

\end{abstract}

\begin{keyword}[class=MSC]
\kwd[Primary ]{82D75, 60J80, 60J75}
\kwd{}
\kwd[; secondary ]{60J99}
\end{keyword}

\begin{keyword}
\kwd{Neutron Transport Equation, principal eigenvalue, semigroup theory, Perron-Frobenius decomposition}
\end{keyword}

\end{frontmatter}

\setcounter{tocdepth}{1}

\section{Introduction}\label{intro} In this article we continue our previous work in \cite{SNTE} and look in more detail at the stochastic analysis of the Markov process that lies behind the Neutron Transport Equation (NTE). We recall that the  latter describes the flux, $\Psi_t$, at time $t\geq 0$, of neutrons across a planar cross-section  in an inhomogeneous fissile medium (measured in number of neutrons per cm$^2$ per second). Neutron flux  is described in terms of  the configuration variables $ (r, v) \in D \times V$, where $D\subseteq\mathbb{R}^3$ is (in general) a non-empty, smooth, open, bounded and convex domain such that $\partial D$ has zero Lebesgue measure, and 
$V$ is the velocity space, which is given by $V = \{\upsilon\in \mathbb{R}^3:\upsilon_{\texttt{min}}\leq  |\upsilon|\leq \upsilon_{\texttt{max}}\}
$, where $0<\upsilon_{\texttt{min}}<\upsilon_{\texttt{max}}<\infty$. 

\smallskip
In its backwards form, the NTE is introduced as an integro-differential equation of the form 
\begin{align}
\frac{\partial}{\partial t}\psi_t(r, \upsilon) &=\upsilon\cdot\nabla\psi_t(r, \upsilon)  -\sigma(r, \upsilon)\psi_t(r, \upsilon)\notag\\
&+ \sigma_{\texttt{s}}(r, \upsilon)\int_{V}\psi_t(r, \upsilon') \pi_{\texttt{s}}(r, \upsilon, \upsilon')\d\upsilon' + \sigma_{\texttt{f}}(r, \upsilon) \int_{V}\psi_t(r, \upsilon') \pi_{\texttt{f}}(r, \upsilon, \upsilon')\d\upsilon',
\label{bNTE}
\end{align}
where the five fundamental quantities  $\sigma_{\texttt{s}}$, $\pi_{\texttt{s}}$, $\sigma_{\texttt{f}}$, $\pi_{\texttt{f}}$ and $\sigma$ (known as {\it cross-sections} in the physics literature) are all uniformly bounded and measurable with the following interpretation:
\begin{align*}
\sigma_{\texttt{s}}(r, \upsilon) &: \text{ the rate at which scattering occurs from incoming velocity $\upsilon$ at position $r$,}\\
\sigma_{\texttt{f}}(r, \upsilon) &: \text{  the rate at which fission occurs from incoming velocity $\upsilon$  at position $r$,}\\
\sigma(r, \upsilon) &: \text{ the sum of the rates } \sigma_{\texttt{f}}+ \sigma_{\texttt{s}} \text{ and is known as the total cross section,}\\
\pi_{\texttt{s}}(r, \upsilon, \upsilon') &: \text{  probability density that an incoming velocity $\upsilon$ at position $r$ scatters to an } \\
 &\hspace{0.5cm}\text{outgoing velocity, with probability $\upsilon'$ satisfying }\textstyle{\int_V}\pi_{\texttt{s}}(r, \upsilon, \upsilon'){\rm d}\upsilon'=1,\text{ and }\\
 \pi_{\texttt{f}}(r, \upsilon, \upsilon') &:  \text{  density of expected neutron yield at velocity $\upsilon'$ from fission with }   \\
 &\hspace{0.5cm}\text{incoming velocity  $\upsilon$ satisfying } \textstyle{\int_V\pi_{\texttt{f}}}(r, \upsilon, \upsilon')\d\upsilon' <\infty.
\end{align*}

It is also usual to assume the additional boundary conditions 
\begin{equation}
\left\{
\begin{array}{ll}
\psi_0(r, \upsilon) = g(r, \upsilon) &\text{ for }r\in  D, \upsilon\in{V},
\\
&
\\
\psi_t(r, \upsilon)= 0& \text{ for } t \ge 0 \text{ and } r\in \partial D,
\text{ if }\upsilon
\cdot{\bf n}_r>0,
\end{array}
\right.
\label{BC}
\end{equation}
where  ${\bf n}_r$ is the outward facing normal of $D$ at $r\in \partial D$ and $g: D\times {V}\to [0,\infty)$ is a bounded, measurable function which we will later assume has some additional properties. Roughly speaking, this means that neutrons at the boundary which are travelling in the direction of the exterior of the domain are lost to the system. 
\smallskip

{\color{black}We will also work with some of (but not necessarily all of) the following assumptions in our results:
\smallskip
\begin{itemize}
{\bf \item[(H1)] Cross-sections $\sigma_{\texttt{s}}$, $\sigma_{\texttt{f}}$, $\pi_{\texttt{s}}$ and $\pi_{\texttt{f}} $ are uniformly bounded away from infinity.
\vspace{0.2cm}
\item[(H2)] We have $\sigma_{\texttt{s}} \pi_{\texttt{s}}  + 
\sigma_{\texttt{f}} \pi_{\texttt{f}}>0$ on $D\times V\times V$.
\vspace{0.2cm}
\item[(H3)] There is an open ball $B$ compactly embedded in $D$ such that $\sigma_{\texttt{f}}\pi_{\texttt{f}} >0$ on $B\times V\times V$.
\vspace{0.2cm}
\item[(H4)] Fission offspring are bounded in number  by the constant $n_{\texttt{max}}> 1$.}
\end{itemize}

\smallskip

We note that these assumptions are sufficient but not necessary, and refer the reader to Remark 2.1 in~\cite{SNTE} for a discussion of their implications.}

\subsection{Rigorous interpretation of the NTE} As explained in the companion paper \cite{SNTE},
the NTE \eqref{bNTE} is not a  meaningful equation in the pointwise sense. Whereas previously \eqref{bNTE} has been interpreted as an abstract Cauchy process on the $L_2(D\times V)$ space, for probabilistic purposes, the NTE can be better understood in its mild form; see the review discussion in \cite{MultiNTE}.
In particular,  the NTE is henceforth understood as the unique bounded solution on bounded intervals of time which satisfy \eqref{BC} and  the so-called {\it mild equation}
\begin{equation}
\psi_t[g](r,\upsilon) = {\texttt{U}}_t[g](r,\upsilon) + \int_0^t {\texttt{U}}_s[({\bS} + {\bF})\psi_{t-s}[g]](r,\upsilon)\d s, \qquad t\geq 0, r\in D, \upsilon\in V.
\label{mild}
\end{equation}
for $g\in L^+_\infty(D\times V)$, the space of non-negative functions in $L_\infty(D\times V)$. In \eqref{mild}, the 
advection semigroup  is given by 
\begin{equation}
\texttt{U}_t[g]( r,\upsilon) = g( r+\upsilon t, \upsilon)\mathbf{1}_{(t<\kappa^D_{r,\upsilon})}, \qquad t\geq 0.
\label{adv}
\end{equation}
where $
\kappa_{r,\upsilon}^{D} := \inf\{t>0 : r+\upsilon t\not\in D\}, 
$
the scattering operator is given by
\begin{equation}
\bS g(r, \upsilon) = \sigma_{\texttt s}(r, \upsilon)\int_V g(r,\upsilon')\pi_{\texttt s}(r,\upsilon,\upsilon')\d \upsilon' - \sigma_{\texttt s}(r,\upsilon)g(r,\upsilon) ,
\label{S}
\end{equation}
and the fission operator is given by
\begin{equation}
\bF g(r, \upsilon)  = \sigma_{\texttt f}(r,\upsilon)\int_V g(r,\upsilon')  \pi_{\texttt f}(r,\upsilon,\upsilon')\d \upsilon'- \sigma_{\texttt f}(r,\upsilon)g(r,\upsilon),
\label{F}
\end{equation}
for $r\in D$, $\upsilon \in V$ and $g\in L^+_\infty(D\times V)$. 

\smallskip

The papers \cite{SNTE} and \cite{MultiNTE} discuss in further detail how the mild representation relates to the other classical representation of the NTE via an abstract Cauchy problem which has been treated in e.g. \cite{D, DL6, M-K}. To understand better why the mild equation \eqref{mild} is indeed a suitable representation fo the NTE, we need to understand the probabilistic model of the physical process of nuclear fission.

\begin{figure}[h!]
\includegraphics[width=0.5 \textwidth]{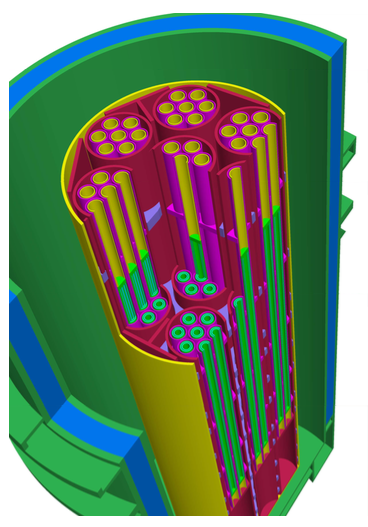}
\caption{The geometry of a nuclear reactor core representing a physical domain $D$, on to which the different cross-sectional values of  $ \sigma_{\emph{\texttt{s}}}, \sigma_{\emph{\texttt{f}}},  \pi_{\emph{\texttt{s}}},  \pi_{\emph{\texttt{f}}}$ as mapped as numerical values.}
\label{fig}
\end{figure}
\subsection{Neutron Branching Process}\label{NBPsect} Let us recall from \cite{SNTE}, the {\it neutron branching process} (NBP), whose expectation semigroup provides the solution to \eqref{mild}. It is modelled as  a branching process, which at time $t\geq0$ is represented by a configuration of particles which are specified via their physical location and velocity in $D\times V$, say $\{(r_i(t), \upsilon_i(t)): i = 1,\dots , N_t\}$, where $N_t$ is the number of particles alive at time $t\ge 0$.
In order to describe the process, we will represent it as a process in the space of finite atomic measures
\begin{equation}
X_t(A) = \sum_{i=1}^{N_t}\delta_{(r_i(t), \upsilon_i(t))}(A), \qquad A\in\mathcal{B}(D\times V), \;t\ge 0,
\label{atomicvalued}
\end{equation}
%
%
%
%
%
where $\delta$ is the Dirac measure, defined on $\mathcal{B}(D\times V)$, the Borel subsets of $D\times V$.
The evolution of $(X_t, t\geq 0)$ is a stochastic process valued in the space of  measures
$
\mathcal{M}(D\times V): = \{\textstyle{\sum_{i = 1}^n}\delta_{(r_i,\upsilon_i)}: n\in \mathbb{N}, (r_i,\upsilon_i)\in D\times V, i = 1,\cdots, n\}
$
which evolves randomly as follows.

\smallskip

A particle positioned at $r$ with velocity $\upsilon$ will continue to move along the trajectory $r + \upsilon t$, until one of the following things happen. 
\begin{enumerate}
\item[(i)] The particle leaves the physical domain $D$, in which case it is instantaneously killed. 

\item[(ii)] Independently of all other neutrons, a scattering event occurs when a neutron comes in close proximity to an atomic nucleus and, accordingly, makes an instantaneous change of velocity. For a neutron in the system with position and velocity $(r,\upsilon)$, if we write $T_{\texttt{s}}$ for the random time that scattering may occur, then  providing $r+\upsilon t\in D$, independently of the action of fission, 
$
\Pr(T_{\texttt{s}}>t) = \exp\{-\textstyle{\int_0^t} \sigma_{\texttt{s}}(r+\upsilon s, \upsilon){\rm d}s \}, $ for $t\geq0.$
\smallskip

When scattering occurs at space-velocity $(r,\upsilon)$, the new velocity is selected in $V$ independently with probability $\pi_{\texttt{s}}(r, \upsilon, \upsilon')\d\upsilon'$. 

\item[(iii)] Independently of all other neutrons, a fission event occurs when a neutron smashes into an atomic nucleus. 
For a neutron in the system  with initial position and velocity $(r,\upsilon)$, if we write $T_{\texttt{f}}$ for the random time that fission may occur, then, providing $r+\upsilon t\in D$, independently scattering, 
$
\Pr(T_{\texttt{f}}>t) = \exp\{-\textstyle{\int_0^t} \sigma_{\texttt{f}}(r+\upsilon s, \upsilon){\rm d}s \},$ for $t\geq 0.
$
\smallskip

When fission occurs,  the smashing of the atomic nucleus produces  lower mass isotopes and releases  a random number of neutrons, say $N\geq 0$, which are ejected from the point of impact with randomly distributed, and possibly correlated, velocities, say $\{\upsilon_i: i=1,\cdots, N\}$. The outgoing velocities are described by  the atomic random measure 
\begin{equation}
\label{PP}
\mathcal{Z}(A): = \sum_{i= 1}^{N } \delta_{\upsilon_i}(A), \qquad A\in\mathcal{B}(V).
\end{equation} 

When fission occurs at location $r\in\mathbb{R}^d$ from a particle with incoming velocity $\upsilon\in{V}$, 
we denote by ${\mathcal P}_{(r,\upsilon)}$ the law of $\mathcal{Z}$.
The probabilities ${\mathcal P}_{(r,\upsilon)}$ are such that, for  $\upsilon'\in{V}$, for bounded and measurable $g: V\to[0,\infty)$,
\begin{align}
\int_V g(\upsilon')\pi_{\texttt{f}}(r, v, \upsilon')\d\upsilon' &= {\mathcal E}_{(r,\upsilon)}\left[\int_V g(\upsilon')\mathcal{Z}(\d \upsilon')\right]
=: {\mathcal E}_{(r,\upsilon)}[\langle g, \mathcal{Z}\rangle],
\label{Erv}
\end{align}
where $ {\mathcal E}_{(r,\upsilon)}$ denotes expectation with respect to $ {\mathcal P}_{(r,\upsilon)}$.
Note, the possibility that $\Pr(N = 0)>0$, which will be tantamount to neutron capture (that is, where a neutron slams into a nucleus but no fission results and the neutron is absorbed into the nucleus). 
\end{enumerate}
\smallskip

%
%
%
%

Write $\mathbb{P}_{\mu}$ for the the law of $X$ when issued from an initial configuration $\mu\in\mathcal{M}(D\times V)$.
Coming back to how the physical process relates to the NTE, it was show in 
\cite{MultiNTE, SNTE, D, DL6} that, under the assumptions (H1) and (H2), the unique solution, which is bounded on bounded intervals of time,  to \eqref{mild} is given by
\begin{equation}
\psi_t[g](r,\upsilon) : = \mathbb{E}_{\delta_{(r, \upsilon)}}[\langle g, X_t \rangle], \qquad t\geq 0, r\in \bar{D}, \upsilon\in{V},
\label{semigroup}
\end{equation}
for $ g\in L^+_\infty(D\times V)$.
The NBP is thus parameterised by the quantities $\sigma_{\texttt s}, \pi_{\texttt s}, \sigma_{\texttt f}$ and the family of measures ${\mathcal P} =({\mathcal P}_{(r,\upsilon)} , r\in D,\upsilon\in V)$ and accordingly we refer to it as a $(\sigma_{\texttt s}, \pi_{\texttt s}, \sigma_{\texttt f}, \mathcal{P})$-NBP. It  is associated to the NTE via the relation \eqref{semigroup}, but this  association does not uniquely identify the NBP.  Nonetheless for a given quadruple $(\sigma_{\texttt s}, \pi_{\texttt s}, \sigma_{\texttt f}, \pi_{\texttt f})$, it was shown in \cite{SNTE} that under the assumptions (H1) and (H3), at least one NBP always exists that can be associated to it via \eqref{semigroup}.
\smallskip

There is, however, a second equation similar to \eqref{mild}, which describes the non-linear semigroup of the neutron branching process and which {\it does} uniquely identify the $(\sigma_{\texttt s}, \pi_{\texttt s}, \sigma_{\texttt f}, \mathcal{P})$-NBP.  Write the {\it branching generator} associated with the physical process by\footnote{Here and elsewhere, an empty product is always understood to be unity.}
\begin{equation}
G[g](r,\upsilon) =  \sigma_{\texttt f}(r,\upsilon){\mathcal E}_{(r,\upsilon)}\Bigg[\prod_{j =1}^N g(r, \upsilon_j) - g(r,\upsilon)\Bigg]
\label{G}
\end{equation}
for $r\in D$, $\upsilon \in V$ and $g\in L^+_\infty(D\times V)$ and define
\begin{equation}
u_t[g](r,\upsilon): = \mathbb{E}_{\delta_{(r, \upsilon)}}\left[\prod_{i=1}^{N_t}g(r_i(t), \upsilon_i(t)) \right], \qquad t\geq 0.
\label{ut}
\end{equation}
{\color{black} Formally speaking, by extending the domain in which particles live to include a cemetery state $\dagger$, corresponding to neutron capture or  neutrons going to the boundary $\partial D$,  we will always work with the convention (cf. \cite{INW1, INW2, INW3}) that functions appearing in additive functionals are valued as zero on $\{\dagger\}$, whereas in multiplicative functionals, they are valued as one on $\{\dagger\}$. One may think of this as requiring that empty sums are valued as zero where as empty products are valued as one. }
\smallskip

{\color{black}As shown in Section 8 of~\cite{SNTE}, we can break the expectation over the event of scattering or fission in \eqref{ut} and, appealing to standard manipulations (cf. \cite{MultiNTE, SNTE}) we see that, for $g\in L^+_\infty(D\times V)$, which is uniformly bounded by unity,
\begin{equation}
u_t[g] = \hat{\U}_t[g] +\int_0^t \U_s[\bS u_{t-s}[g] + G[u_{t-s}[g]]\d s, \qquad t\geq 0,
\label{non-linear}
\end{equation}
where
\begin{equation}
\hat{\U}_t[g](r, \upsilon) = g(r + \upsilon(t \wedge \kappa_{r, \upsilon}^D), \upsilon).
\label{advhat}
\end{equation}
Under the assumptions (H1), (H2) and (H4), it was also shown in \cite{SNTE} that \eqref{non-linear} has a unique solution in the space of non-negative functions, which are bounded over bounded intervals of time.}
\smallskip

Before moving on to the asymptotics of $(\psi_t, t\geq 0)$, let us make an important note regarding alternative representations of equations \eqref{mild} and \eqref{non-linear} for later use. In order to do so, let us momentarily introduce what we mean by a neutron random walk (NRW); cf. \cite{SNTE}.  A NRW on $D$,  is defined by its scatter rates, $\alpha(r,\upsilon)$, $r\in D, \upsilon\in V$, and scatter probability densities $\pi(r,\upsilon,\upsilon')$, $r\in D, \upsilon,\upsilon'\in V$ where $\textstyle{\int_V \pi(r,\upsilon, \upsilon')\d\upsilon'}=1$ for all $r\in D, \upsilon\in V$. When issued from $r \in D$ with a velocity $\upsilon$, the NRW will propagate linearly with that velocity until either it exits the domain $D$, in which case it is killed, or at the random time $T_{\texttt{s}}$ a scattering occurs, where
$
\Pr(T_{\texttt{s}}>t) = \exp\{-\textstyle{\int_0^t} \alpha(r+\upsilon s, \upsilon){\rm d}s \}, $ for $t\geq0.$ 
When the scattering event occurs in  position-velocity configuration $(r,\upsilon)$, a new velocity $\upsilon'$ is selected with probability $\pi(r,\upsilon,\upsilon')\d\upsilon'$. We refer more specifically to the latter as an $\alpha\pi$-NRW.
\smallskip

The linear mild equation \eqref{mild} and its accompanying non-linear mild form \eqref{non-linear}, although consistent with existing literature (cf. \cite{MultiNTE, SNTE, SNTE-III, MCNTE}) can be equally identified as the unique (in the same sense as mentioned in the previous paragraph) solution to the equations
\begin{equation}
\label{Qlin}
\psi_t[g](r,\upsilon) = {\texttt{Q}}_t[g](r,\upsilon) + \int_0^t {\texttt{Q}}_s[{\bF}\psi_{t-s}[g]](r,\upsilon)\d s, \qquad t\geq 0, r\in D, \upsilon\in V.\end{equation}
and
{\color{black}\begin{equation}
\label{Qnonlin}
u_t[g] = \hat{\Q}_t[g](r,\upsilon) +\int_0^t \Q_s[G[u_{t-s}[g]](r,\upsilon)\d s, \qquad t\geq 0, r\in D, \upsilon\in V,
\end{equation}
respectively, where for $g\in L_\infty^+(D\times V)$,
\[
\Q_t[g](r,\upsilon) = \mathbf{E}_{(r,\upsilon)}[g(R_t,\Upsilon_t)\mathbf{1}_{(t<\tau^D)}],
\]
and 
\[
\hat{\Q}_t[g](r,\upsilon) = \mathbf{E}_{(r,\upsilon)}[g(R_{t\wedge \tau^D},\Upsilon_{t\wedge \tau^D})],
\]
are the expectation semigroups associated with the $\sigma_{\texttt s}\pi_{\texttt s}$-NRW and $\tau^D = \inf\{t>0: R_t\not\in D\}$.}

\subsection{Lead order asymptotics of the expectation semigroup}

In the accompanying predecessor to this article, \cite{SNTE}, a Perron-Frobenius type asymptotic was developed for $(\psi_t, t\geq 0)$. In order to state it we need to introduce another assumption, which is slightly stronger than (H2). To this end, define 
\begin{align}
\alpha(r,\upsilon)\pi(r, \upsilon, \upsilon') = \sigma_{\texttt{s}}(r,\upsilon)\pi_{\texttt{s}}(r, \upsilon, \upsilon') + \sigma_{\texttt{f}}(r,\upsilon) \pi_{\texttt{f}}(r, \upsilon, \upsilon')\qquad r\in D, \upsilon, \upsilon'\in V.
\label{alpha}
\end{align}
Our new condition is:
\smallskip

{\color{black}
{\bf (H2)$^*$:  We have $\textstyle{\inf_{r\in D, \upsilon, \upsilon'\in V} \alpha(r,\upsilon)\pi(r,\upsilon,\upsilon')>0}$.}
}

\begin{theorem}\label{PF}
Suppose that (H1) and (H2)$^*$ hold.
Then, for semigroup  $(\psi_t,t\geq0)$  identified by \eqref{mild},  there exists  a $\lambda_*\in\mathbb{R}$, a  positive\footnote{To be precise, by a positive eigenfunction, we mean a mapping from $D\times V\to (0,\infty)$. This does not prevent it being valued zero on $\partial D$, as $D$ is an open bounded, convex domain.} right eigenfunction $\varphi \in L^+_\infty(D\times V)$ and a left eigenmeasure which is absolutely continuous with respect to Lebesgue measure on $D\times V$ with density $\tilde\varphi\in L^+_\infty(D\times V)$, both having associated eigenvalue ${\rm e}^{\lambda_* t}$, and such that $\varphi$  (resp. $\tilde\varphi$) is uniformly (resp. a.e. uniformly) bounded away from zero on each compactly embedded subset of $D\times V$. In particular for all $g\in L^+_{\infty}(D\times V)$ 
\begin{equation}
\langle\tilde\varphi, \psi_t[g]\rangle = {\rm e}^{\lambda_* t}\langle\tilde\varphi,g\rangle\quad  \text{(resp. } 
\psi_t[\varphi] = {\rm e}^{\lambda_* t}\varphi
\text{)} \quad t\ge 0.
\label{leftandright}
\end{equation}
Moreover, there exists $\varepsilon>0$ such that, for all $g\in L^+_\infty(D\times V)$,
\begin{equation}
  \left\|{\rm e}^{-\lambda_* t}{\varphi}^{-1}{\psi_t[g]}-\langle\tilde\varphi, g\rangle\right\|_\infty = O({\rm e}^{-\varepsilon t}) \text{ as $t\rightarrow+\infty$.}
\label{phiasymp}
\end{equation}

\end{theorem}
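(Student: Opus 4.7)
The plan is to realise $(\psi_t)_{t\geq 0}$ as a Feynman--Kac semigroup driven by an underlying NRW killed on exiting $D$, and then apply a Perron--Frobenius / quasi-stationary argument that exploits the Doeblin-type minorisation supplied by (H2)$^*$. Starting from \eqref{Qlin}, a Duhamel iteration (equivalently, a standard many-to-one identity for the branching particle system) gives
\[
\psi_t[g](r,\upsilon) = \mathbf{E}_{(r,\upsilon)}\!\left[g(R_t,\Upsilon_t)\exp\!\left(\int_0^t \beta(R_s,\Upsilon_s)\d s\right)\mathbf{1}_{(t<\tau^D)}\right],
\]
where $(R,\Upsilon)$ is an $\alpha\pi$-NRW with $\alpha$ and $\pi$ as in \eqref{alpha}, and $\beta$ is a bounded potential encoding the mean fission gain. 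Under (H1), $\beta$ is bounded, so $(\psi_t)$ is a bounded positive semigroup on $L^+_\infty(D\times V)$.

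The decisive input from (H2)$^*$ is a uniform lower bound on the scatter kernel $\alpha\pi$, which yields a two-sided Doeblin-type estimate for the killed and weighted semigroup. Concretely, using that a uniformly bounded number of scatters in a fixed time $t_0$ suffices to reach any compactly embedded subset of $D$ from any interior starting point, one obtains $\eta>0$, a probability measure $\nu$ on $D\times V$ and a positive function $h$ such that
\[
\eta\,h(r,\upsilon)\,\langle \nu, g\rangle \;\leq\; \psi_{t_0}[g](r,\upsilon) \;\leq\; C\,h(r,\upsilon)\,\|g\|_\infty
\]
for $g\in L^+_\infty(D\times V)$, with matching control on two consecutive minorising measures. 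This is the standard input of a Perron--Frobenius / quasi-stationary argument: it produces a simple, isolated principal eigenvalue $e^{\lambda_* t_0}$, a strictly positive right eigenfunction $\varphi$ and a positive left eigenmeasure; iterating $\psi_{t_0}$ once more convolves against the bounded scatter density $\pi$, so the left eigenmeasure is absolutely continuous with $L^+_\infty$-density $\tilde\varphi$. Uniform positivity of $\varphi$ and $\tilde\varphi$ on compactly embedded subsets is inherited from the same minorisation, and the semigroup relation \eqref{leftandright} then propagates the eigenfunction property to all $t\geq 0$.

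For the exponential rate \eqref{phiasymp}, I would pass to the Doob $h$-transform with $h=\varphi$, rewriting $e^{-\lambda_* t}\varphi^{-1}\psi_t[g\,\varphi]$ as a bona fide Markov semigroup that inherits the Doeblin condition from the original minorisation. Geometric ergodicity of Markov semigroups under Doeblin minorisation then provides a rate $\varepsilon>0$ and the uniform estimate claimed in \eqref{phiasymp}, with constant and rate depending only on $\eta$, $t_0$ and $\|\beta\|_\infty$.

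The main technical obstacle is handling the killing at $\partial D$ in tandem with the degeneracy of the transport part of the generator: the advection semigroup $\U_t$ merely translates along rays and smooths neither $r$ nor $\upsilon$, so \emph{all} mixing must come from scattering. Verifying both the lower and upper Doeblin estimates above in the presence of killing at the outgoing boundary (where $\varphi$ itself must vanish) requires a careful accounting of survival probabilities on interior compact sets, and the construction of $h$ must be sharp enough to vanish at the correct rate near $\partial D$. A second subtlety is establishing that the left eigenmeasure is absolutely continuous with an $L_\infty$-density rather than merely a bounded Radon measure: this forces one to use (H2)$^*$ through the uniform density bound on a single scattering, rather than the weaker irreducibility in (H2).
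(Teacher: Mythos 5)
The paper does not in fact prove Theorem \ref{PF} here: immediately above the statement the authors write that ``a Perron--Frobenius type asymptotic was developed for $(\psi_t,t\ge 0)$'' in the companion paper \cite{SNTE}, and Theorem \ref{PF} is imported verbatim from there. Your proposal is a reconstruction of the proof in \cite{SNTE}, not of anything in the present article, and it does capture the broad strategy used there: pass to a Feynman--Kac representation over the $\alpha\pi$-NRW (the analogue of \eqref{Qlin} for the advection-plus-scattering process, with a bounded potential coming from the mean fission gain), use (H2)$^*$ to obtain a Doeblin-type minorisation, invoke a quasi-stationary/Perron--Frobenius result, and control the rate via an $h$-transform. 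The acknowledgements to Villemonais are consistent with the fact that \cite{SNTE} leans on the Champagnat--Villemonais machinery for non-conservative semigroups.

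Two cautions, however. First, the pair of bounds you write down,
\[
\eta\,h(r,\upsilon)\,\langle \nu, g\rangle \;\leq\; \psi_{t_0}[g](r,\upsilon) \;\leq\; C\,h(r,\upsilon)\,\|g\|_\infty,
\]
is not the form of minorisation that the quasi-stationarity theory actually requires; in particular it is not true in general that a single function $h$ both minorises from below and dominates $\psi_{t_0}[g]$ up to a constant in the presence of boundary killing. What is needed is a lower Doeblin bound paired with a comparison of survival probabilities (a Harnack-type condition on $\mathbf P_{(r,\upsilon)}(t<\tau^D)$ between arbitrary starting points and the minorising measure $\nu$), and verifying the latter near $\partial D$ — where both $\varphi$ and the survival probability degenerate — is precisely where the bulk of the technical work in \cite{SNTE} lies. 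Your phrasing ``with matching control on two consecutive minorising measures'' gestures at this but elides the real difficulty. Second, the claim that ``geometric ergodicity of Markov semigroups under Doeblin minorisation'' immediately transfers under the $h$-transform deserves more care: one must first establish that the $\varphi$-transformed process is genuinely conservative on $D\times V$ (which requires the eigenfunction relation $\psi_t[\varphi]=e^{\lambda_* t}\varphi$, a conclusion of the theorem rather than an input), and then verify that the minorisation survives the reweighting by $\varphi(\cdot)/\varphi(r,\upsilon)$, which blows up near $\partial D$. These are the steps you flag as ``obstacles'' in your last paragraph, and you are right to flag them; but a complete proof would need to discharge them, and the paper at hand does not, because it simply cites \cite{SNTE} for the full argument.
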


\smallskip

In light of Theorem \ref{PF}, we can categorise the physical process according to the value of $\lambda_*$. In particular, when $\lambda_*>0$ we say the process is supercritical, when $\lambda_* =0$, the process is critical and when $\lambda_*<0$, the process is subcritical.

\subsection{Strong law of large numbers at supercriticality}
The main aim of this article as a continuation of \cite{SNTE} is to understand the almost sure behaviour of the $(\sigma_{\texttt s}, \pi_{\texttt s}, \sigma_{\texttt f}, \mathcal{P})$-NBP in relation to what is, in effect, a statement of mean growth in Theorem \ref{PF}, in the setting that $\lambda_*>0$. In the aforesaid article, it was noted that 
\begin{equation}
W_t: = {\rm e}^{-\lambda_* t}\frac{\langle \varphi, X_t\rangle}{\langle\varphi, \mu\rangle}, \qquad t\geq 0,
\label{addmg}
\end{equation}
is a unit mean  martingale under $\mathbb{P}_\mu$, $\mu\in \mathcal{M}(D\times V)$ and, moreover its convergence was studied. 
{\color{black}
In particular, since the martingale is non-negative, we automatically know that it must converge to a limiting random variable, that is, ${W_t\rightarrow W_\infty}$, $\mathbb{P_\mu}$-almost surely, where we can take $\textstyle{W_\infty:=\liminf_{t\to0} W_t}$ for definiteness. 
}
Before stating the result regarding the latter, we require one more assumption on the NBP:
\bigskip

{\color{black}\bf  (H3)$^*$: There exists a ball $B$ compactly embedded in $D$ such that 
\[
\inf_{r \in B, \upsilon, \upsilon' \in V}\sigma_{\texttt{f}}(r, \upsilon)\pi_{\texttt{f}}(r, \upsilon, \upsilon') > 0.
\]
}

The following result was derived in \cite{SNTE}.
\begin{theorem}\label{Kesten} For the  $(\sigma_{\emph{\texttt s}}, \pi_{\emph{\texttt s}}, \sigma_{\emph{\texttt f}}, \mathcal{P})$-NBP satisfying the assumptions (H1), (H2)$^*$, (H3)$^*$ and (H4), the martingale $(W_t, t\geq 0)$ 
{\color{black}
converges to $W_\infty$ $\mathbb{P}$-almost surely and in $L_2(\mathbb{P})$ 
if and only if $\lambda_*>0$,
otherwise $W_\infty=0$ $\mathbb{P}$-almost surely. 
}
\end{theorem}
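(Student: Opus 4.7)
The proof naturally splits according to the sign of $\lambda_*$.

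For the supercritical case $\lambda_* > 0$, the plan is to establish $L^2$-boundedness of $W_t$ uniformly in $t$; almost sure and $L^2$ convergence of the non-negative martingale then follow from standard martingale theory, and uniform integrability combined with $\mathbb{E}_\mu[W_t] = 1$ forces $\mathbb{E}_\mu[W_\infty] = 1$, ruling out triviality. The key ingredient is a many-to-two formula for the second moment, which may be derived either by a spine argument or by formally differentiating the non-linear equation \eqref{Qnonlin} twice at $g \equiv 1$, and which takes the shape
\begin{equation*}
\mathbb{E}_{\delta_{(r,\upsilon)}}\!\left[\langle \varphi, X_t\rangle^2\right] = \psi_t[\varphi^2](r,\upsilon) + \int_0^t \psi_s\!\left[\sigma_{\texttt f}\,\mathcal{E}_{(\cdot)}\!\big[\langle \psi_{t-s}[\varphi], \mathcal{Z}\rangle^2 - \langle \psi_{t-s}[\varphi]^2, \mathcal{Z}\rangle\big]\right]\!(r,\upsilon)\,\mathrm{d}s.
\end{equation*}
Substituting $\psi_\tau[\varphi] = e^{\lambda_*\tau}\varphi$ from Theorem~\ref{PF} pulls out a factor $e^{2\lambda_*(t-s)}$, leaving a time-independent source $F(r,\upsilon) = \sigma_{\texttt f}(r,\upsilon)\mathcal{E}_{(r,\upsilon)}[\langle\varphi,\mathcal{Z}\rangle^2 - \langle\varphi^2,\mathcal{Z}\rangle]$, which is bounded thanks to (H1), (H4), and $\varphi \in L_\infty^+$. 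A second application of Theorem~\ref{PF} gives $\psi_s[F] \lesssim e^{\lambda_* s}\varphi$ uniformly, so that $\mathbb{E}_\mu[W_t^2] \lesssim e^{-\lambda_* t} + \int_0^t e^{-\lambda_* s}\,\mathrm{d}s$, which is uniformly bounded in $t$ precisely when $\lambda_* > 0$.

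For the subcritical case $\lambda_* < 0$, I would apply Theorem~\ref{PF} to $g \equiv 1$ to obtain $\mathbb{E}_\mu[N_t] = \langle \psi_t[1], \mu\rangle = O(e^{\lambda_* t}) \to 0$. Markov's inequality then gives $\mathbb{P}_\mu(N_t \geq 1) \to 0$, and since the event $\{N_t \geq 1\}$ is monotone decreasing in $t$, this upgrades to extinction almost surely. Thereafter $\langle\varphi, X_t\rangle$ vanishes pathwise, and hence $W_\infty = 0$.

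The main obstacle is the critical case $\lambda_* = 0$, where the first moment no longer decays, Markov's inequality is useless, and the second-moment estimate above diverges. My plan here is to analyse the non-linear semigroup \eqref{Qnonlin} specialised to $g \equiv 0$, exploiting that $u_t[0](r,\upsilon) = \mathbb{P}_{\delta_{(r,\upsilon)}}(N_t = 0)$. Setting $v_t = 1 - u_t[0]$ and expanding $G[1 - v_{t-s}]$ to second order via the bounded-offspring condition (H4), the leading linear contribution cancels via the eigenfunction identity of Theorem~\ref{PF}, producing a Riccati-type integral inequality for $v_t$ whose quadratic coefficient is bounded below by a positive constant thanks to the strong fission assumption (H3)$^*$. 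A Gronwall-type comparison then delivers Kolmogorov-type decay $v_t = O(1/t) \to 0$, yielding almost sure extinction and $W_\infty = 0$ pathwise, completing the proof.
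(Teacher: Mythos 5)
The paper does not itself prove Theorem \ref{Kesten}: the sentence preceding the statement records that it is taken from the companion paper \cite{SNTE}, so there is no in-paper proof against which to compare. Internal evidence nonetheless confirms that your supercritical argument is the route taken there. The proof of Lemma \ref{Simonlemma} quotes equations (10.1), (10.3) and Corollary 5.3 of \cite{SNTE}, which furnish precisely a second-moment bound
\[
\E_{\delta_{(r,\upsilon)}}\bigl[W_t^2\bigr]\;\leq\; c\int_0^\infty {\rm e}^{-2\lambda_* s}\,\frac{\psi_s[1](r,\upsilon)}{\varphi(r,\upsilon)}\,\d s,
\]
and this is exactly what your many-to-two identity yields after substituting $\psi_\tau[\varphi]={\rm e}^{\lambda_*\tau}\varphi$ and bounding the resulting source term using (H1), (H4), $\varphi\in L^+_\infty(D\times V)$, and the uniform estimate in Theorem \ref{PF}. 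Your subcritical argument via Markov's inequality and the monotonicity of the event $\{N_t\geq 1\}$ is likewise fine.

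The genuine gap is the critical case $\lambda_*=0$. The proposed Riccati--Gronwall argument is plausible in spirit but, as written, ignores the spatial inhomogeneity that makes this case hard. The quadratic coefficient in the expansion of $G[1-v]$ is uniformly bounded below only on the ball $B$ of (H3)$^*$; outside $B$ the fission cross-section can vanish, so no pointwise Riccati inequality for $v_t(r,\upsilon)$ closes directly, and you have not shown that surviving lines of descent cannot persist indefinitely in regions where the quadratic term degenerates. A rigorous treatment would typically project $v_t$ onto the left eigenmeasure, use the spectral gap furnished by Theorem \ref{PF} to control the discrepancy between $v_t$ and the rank-one projection $\langle\tilde\varphi, v_t\rangle\varphi$, and only then run a scalar comparison argument for $\langle\tilde\varphi, v_t\rangle$. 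This is a substantive step you cannot elide, and it is not evident that it matches the method in \cite{SNTE}. Note also that you only need the qualitative conclusion $v_t\to 0$ for almost sure extinction (hence $W_\infty=0$ pathwise); the sharp Kolmogorov rate $O(1/t)$ is not required and chasing it may introduce unnecessary difficulty.
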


Note that when $\lambda_*\leq 0$, since $\textstyle{\lim_{t\to0}W_t =0}$ almost surely, it follows that, for each $\Omega$ compactly embedded in $D\times V$, $\textstyle{\lim_{t\to\infty}X_t(\Omega)=0}$. It therefore remains to describe the growth of $X_t(\Omega)$, $t\geq0$, for $\lambda_* > 0$. This is the main result of this paper, given below.  In order to state it, we must introduce the notion of a directionally continuous function on $D\times V$. Such functions are defined as having the property that, for all $r\in D$, $\upsilon \in V$, 
\[
\lim_{t\downarrow0} g(r+\upsilon t, \upsilon) = g(r,\upsilon).
\]

\begin{theorem}\label{SLLN} 
Suppose the assumptions of Theorem \ref{Kesten} hold. 
For all measurable and directionally continuous 
{\color{black} non-negative} 
$g$  on $D\times V$
such that, up to a multiplicative constant, $g\leq \varphi$, 
then for any  initial configuration $\mu\in\mathcal{M}(D\times V)$,
\[
{\rm e}^{-\lambda_* t}\frac{\langle g, X_t\rangle}{\langle\varphi, \mu\rangle}
\rightarrow \langle g,\tilde{\varphi}\rangle W_\infty 
\]
$\mathbb{P}_\mu$-almost surely {\color{black} and in $L_2(\mathbb{P})$}, as $t\rightarrow\infty$.
\end{theorem}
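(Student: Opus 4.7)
The plan is to upgrade the $L^2$ information that is implicit in Theorems~\ref{PF} and~\ref{Kesten} to $\mathbb{P}_\mu$-almost sure convergence by means of a skeletal path decomposition, in the spirit of~\cite{EKW, HHK}, carefully adapted to the non-local branching generator $G$ in~\eqref{G}.

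I would first establish the $L^2$ version of the statement. The spectral asymptotic~\eqref{phiasymp} reads
\[
\psi_t[g](r,\upsilon) = {\rm e}^{\lambda_* t}\varphi(r,\upsilon)\langle g,\tilde\varphi\rangle + O({\rm e}^{(\lambda_*-\varepsilon) t})
\]
uniformly, and a standard many-to-two computation for the NBP --- whose second moment input is supplied by (H4) --- yields both the finiteness of $\sup_{t\ge 0}\mathbb{E}_\mu[({\rm e}^{-\lambda_* t}\langle g, X_t\rangle)^2]$ and the bound
\[
\mathbb{E}_\mu\bigl[\bigl({\rm e}^{-\lambda_* t}\langle g, X_t\rangle/\langle\varphi,\mu\rangle - \langle g,\tilde\varphi\rangle W_t\bigr)^2\bigr]\to 0 \quad \text{as } t\to\infty.
\]
Combined with $W_t \to W_\infty$ in $L^2$ from Theorem~\ref{Kesten}, this delivers the $L^2$ conclusion.

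To upgrade to almost sure convergence I would introduce a skeleton. Setting $w(r,\upsilon):=\mathbb{P}_{\delta_{(r,\upsilon)}}(W_\infty>0)$, one expects, under the standing assumptions, $w$ to be strictly positive on $D\times V$, directionally continuous, and to solve a non-linear equation that can be read off from~\eqref{Qnonlin}. Colouring each particle red (prolific) independently with probability determined by $w$ produces a subtree $Z_t$ which is itself a branching Markov process, whose coloured branching generator is derived from $G$ and which is supercritical in a pathwise sense, since each red particle produces a.s.\ at least one red descendant. The SLLN is then proved along the skeleton at a discrete sequence $t_n = n\delta$ by combining the $L^2$ bound above with a Borel--Cantelli argument, and interpolated to continuous $t$ using directional continuity of $g$ together with a modulus-of-continuity estimate on $\langle\varphi, X_{t_n+s}\rangle$ for $s\in[0,\delta]$, driven by the uniform bounds in (H1) and the boundedness of $V$. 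Writing $X_t = Z_t + (\text{blue dressing})$, the blue dressing contribution to ${\rm e}^{-\lambda_* t}\langle g, X_t\rangle$ is then shown to vanish on $\{W_\infty>0\}$ by a further second moment calculation, while on $\{W_\infty=0\}$ the bound $g\le c\varphi$ and Theorem~\ref{Kesten} give the result directly.

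The main obstacle is the skeletal decomposition itself in the non-local branching setting: in the classical local-branching framework of~\cite{EKW, HHK}, offspring inherit the spatial-velocity configuration of the parent, whereas here fission produces several offspring at the same $r$ but with possibly correlated velocities distributed according to $\mathcal{P}_{(r,\upsilon)}$. Consequently the coloured branching generator must be derived from $G$ by an argument genuinely sensitive to non-locality, and the blue dressing subtrees must be shown to inherit enough regularity (Markov property, finite moments) for the Borel--Cantelli step to go through. This is precisely the skeletal path decomposition for non-local branching generators advertised in the introduction.
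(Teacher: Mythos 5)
Your overall architecture---build a skeleton of ``prolific'' particles, prove a Borel--Cantelli SLLN on the skeleton at lattice times, interpolate to continuous time via directional continuity---matches the paper's approach, and the observation that the non-local branching generator demands a new derivation of the coloured generator is exactly right.

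However, the final transfer step contains a genuine error. You claim that writing $X_t = Z_t + (\text{blue dressing})$, the contribution of the blue dressing to ${\rm e}^{-\lambda_* t}\langle g, X_t\rangle$ vanishes on $\{W_\infty>0\}$. This is false: although each individual blue subtree dies out, new blue subtrees are constantly immigrated off the skeleton, and their aggregate mass does \emph{not} vanish after rescaling by ${\rm e}^{-\lambda_* t}$. Indeed, the limit of ${\rm e}^{-\lambda_* t}\langle g, X_t^\uparrow\rangle$ is $\langle g, \tilde\varphi\, p\rangle W_\infty^\uparrow\,\varphi/p$, while the limit of ${\rm e}^{-\lambda_* t}\langle g, X_t\rangle$ is $\langle g,\tilde\varphi\rangle W_\infty^\uparrow\,\varphi/p$; the difference $\langle g,\tilde\varphi\, w\rangle W_\infty^\uparrow\,\varphi/p$, contributed precisely by the blue dressing, is strictly positive whenever $w>0$. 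The correct mechanism for passing from the skeleton to the full process is not a vanishing estimate but the Binomial thinning structure: conditionally on $\mathcal{F}_t$, the skeleton configuration $X_t^\uparrow$ is a Binomial point process with intensity $p(\cdot)X_t(\cdot)$, so $\mathbb{E}\bigl({\rm e}^{-\lambda_* t}\langle g,X_t^\uparrow\rangle\,\big|\,\mathcal{F}_t\bigr)={\rm e}^{-\lambda_* t}\langle gp,X_t\rangle$. Since $X^\uparrow$ is $\mathcal{F}_\infty$-measurable, one then applies Hunt's lemma (projecting a known $\mathcal{F}_\infty$-conditional limit down onto $\mathcal{F}_t$) to conclude ${\rm e}^{-\lambda_* t}\langle gp,X_t\rangle$ converges almost surely, and finally substitutes $g\mapsto g/p$. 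Without this step your argument does not close.

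A secondary issue: you define $w:=\mathbb{P}_{\delta_{(r,\upsilon)}}(W_\infty>0)$, but the paper's skeleton is built on the survival probability $p=\mathbb{P}(\zeta=\infty)$ with $w=1-p$ the extinction probability. In general $\{\zeta<\infty\}\subsetneq\{W_\infty=0\}$, so your $w$ is a priori a different object. The fixed-point characterisation $w(x)=\mathbb{E}_{\delta_x}\bigl[\prod_i w(x_i(t))\bigr]$, which drives the entire change-of-measure machinery for the $\downarrow$- and $\uparrow$-trees, is established for the extinction probability; if you insist on defining the colouring via $\mathbb{P}(W_\infty>0)$ you would need to prove the analogous multiplicative fixed-point identity for it, which is a nontrivial additional step.
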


To the best of our knowledge no such results can be found in the existing neutron transport literature. The closest known results are found in the final section of \cite{MWY} and are significantly weaker than Theorem \ref{SLLN}.

\smallskip

We can think of Theorem \ref{SLLN} as stating a stochastic analogue of \eqref{phiasymp},
{\color{black}
noting, for example, that the former implies 
\begin{equation}
\lim_{t\to\infty}{\rm e}^{-\lambda_*t}\frac{\mathbb{E}_{\delta_{(r, \upsilon)}}[\langle g, X_t \rangle]}{\varphi(r,v)} = \langle g,\tilde\varphi\rangle 
\label{allthemass}
\end{equation}
for all $r\in D$, $\upsilon \in V$, which is a version of the latter (albeit without the speed of convergence).
}
\smallskip

The proof of  Theorem \ref{SLLN} relies on a fundamental path decomposition, often referred to in the theory of spatial and non-spatial branching processes as a {\it skeletal decomposition}, see e.g. \cite{EKW, HHK, BKM, DW07, MSP}. The skeletal decomposition is essential in that it identifies an embedded NBP within the original one for which there is no neutron-absorption (neither at $\partial D$ nor into nuclei at collision). This `thinned down tree' is significantly easier to analyse for technical reasons, but nonetheless provides all the mass in the limit \eqref{allthemass}.

\section{Skeletal decomposition}\label{Sk} Inspired by \cite{HHK}, we dedicate this section to the proof of a so-called skeletal decomposition, which necessarily requires us to have $\lambda_*>0$. In very rough terms, for the NBP, we can speak of genealogical lines of descent, meaning neutrons that came from a fission event of a neutron that came from a fission event of a neutron ... and so on, back to one of the initial neutrons a time $t=0$. If we focus on an individual  genealogical line of descent embedded in the NBP, it has a space-velocity trajectory which takes the form of a NRW whose spatial component may or may not hit the boundary of $D$. Indeed, when the NBP survives for all time (requiring $\lambda_*>0$), there must necessarily be some genealogical lines of descent whose spatial trajectories remain in $D$ forever. \smallskip

The basic idea of the skeletal decomposition is to consider the collection of all surviving genealogical lines of descent and understand their space-velocity dynamics collectively as a process ({\it the skeleton}). It turns out that the skeleton forms another NBP but with different scatter and fission statistics from the underlying NBP, due to the fact that we are considering genealogical lines of descent which are biased, since they remain in $D$ for all time. 
For the remaining neutron trajectories that go to the boundary of $D$ or end in neutron capture,  the skeletal decomposition identifies them as immigrants that are thrown off the path of the skeleton.
\smallskip

Below, we develop the statement of the skeletal decomposition. It was brought to our attention by a referee that the proof is robust enough to work in the relatively general setting of a {\it Markov branching process (MBP) with non-local branching} and hence we first set up the notation of a general branching process. It is worthy of note that the motivation for this switch to a general setting is that, for branching particle systems, nothing is known of skeletal decompositions for non-local branching generators; although some results have been identified in the more continuous setting of superprocesses, cf \cite{MSP}, they do not apply to particle systems. Our proof is  inspired by the martingale arguments found in \cite{HHK} which gives a skeletal decomposition for branching Brownian motion in a strip with local branching.

\subsection{The general branching Markov setup}\label{MBP}
Until the end of this section (Section \ref{Sk}), unless otherwise mentioned, we will work in the setting of a general MBP, which we will shortly define in more detail. The reader will note that we necessarily choose to overlap our notation for this general setting with that of the NBP. As such, the reader is encouraged to keep in mind the application to the NBP at all times. Additionally, we provide some remarks at the end of this section to illustrate how the general case takes a specific form in the case of the NBP.
\smallskip

%
%
%
%
%
Henceforth, $X =(X_t, t\geq0)$ will be a $(\bP, G)$-Markov branching process on a non-empty, open Euclidian domain\footnote{The arguments presented here are robust enough to work with more abstract domains; see for example the set up in \cite{AH}.} $E \subseteq\mathbb{R}^d$, where $\bP = (\bP_t, t\geq0)$ is a Markov semigroup on $E$ and $G$ is the associated branching generator. More precisely, $X$ is an atomic measure-valued stochastic process (in a similar sense to \eqref{atomicvalued}) in which particles move independently according to a copy of the Markov process associated to $\bP$ such that, when a particle is positioned at $x\in E$,  at the instantaneous spatial rate $\varsigma(x)$, the process will branch and a random number of  offspring, say $N$, are thrown out in positions, say $x_1,\cdots, x_N$ in $E$, according to some law $\mathcal{P}_x$.
{\color{black} (Note, we always consider of $(x_1,\dots,x_N)$ as an ordered set of points.)}
\smallskip

We do not need $\bP$ to have the Feller property, and we assume nothing of the boundary conditions on $E$, in particular, $\bP$ need not be conservative. That said, it will prove to be more convenient to introduce a (possible) cemetery state $\dagger$ appended to $E$, which is to be treated as an absorbing state, and regard $\bP$ as conservative. 
{\color{black} 
As such, 

\begin{equation}
\bP_t[f](x)=\mathbf{E}_x[f(\xi_t)] = \mathbf{E}_x[f(\xi_t)\mathbf{1}_{(t<\tk)}] ,\qquad x\in E, f\in L_\infty^{+}(E),
\label{PtoP}
\end{equation}
where  the process $\xi$, with probabilities $(\mathbf{P}_x,x\in E)$, is the Markov process on $E\cup\{\dagger\}$ with lifetime $\tk =\inf\{t>0: \xi_t \in\{\dagger\}\}$,  $L_\infty^{+}(E)$ is the space of bounded, measurable functions on $E$ and, in this context, we always take $f(\dagger): = 0$. 
}
\smallskip

As such, 
  in a similar spirit to \eqref{G}, we can think of  the branching generator, $G$, as having definition
\begin{equation}
G[f](x) =  \varsigma(x){\mathcal E}_x\Bigg[\prod_{j =1}^N f(x_j) - f(x)\Bigg], \qquad x\in E,
\label{similar111}
\end{equation}
 for $f\in L_\infty^{+,1}(E)$, the space of non-negative   measurable functions on $E$ bounded by unity. {\color{black} As previously, we always define the empty product as equal to unity.}
\smallskip

We  use $\P_{\delta_x}$ for the law of $X$  issued from a single particle positioned at $x\in E$.
{\color{black}  In a similar spirit to
\eqref{ut}, we can introduce the non-linear semigroup of the branching process,
 \begin{equation}
 u_t[g](x): =\mathbb{E}_{\delta_x}\left[\prod_{i = 1}^{N_t} g(x_i(t))\right], \qquad t\geq 0, x\in E, g\in L_\infty^{+,1}(E),
 \label{MBPnonlinear}
 \end{equation}
where $X_t = \textstyle{\sum_{i = 1}^{N_t}\delta_{x_i(t)}}$, $t\geq 0$. As before, we define the empty product to be unity, and for consistency, functions, $g$, appearing in such functionals can be valued on $E\cup\{\dagger\}$ and forced to take the value $g(\dagger) = 1$.
 \smallskip
 
Similarly to the derivation of \eqref{non-linear} and \eqref{Qnonlin}, it is straightforward to show that, for such functions, $u_t[g]$
solves the non-linear mild equation
\begin{equation}
u_t[g] = \hat\bP_t[g](x) +\int_0^t \bP_s[G[u_{t-s}[g]](x)\d s, \qquad t\geq 0, x\in E,
\label{genut}
\end{equation}
where we need to adjust $\bP$ to $\hat\bP$ to accommodate for the fact that empty products are valued as one, as follows
\begin{equation}
\hat\bP_t[g](x)= \mathbf{E}_x[g(\xi_{t\wedge\tk})] ,\qquad x\in E.
\label{hatP}
\end{equation}

}
\smallskip

Now, define 
\begin{equation}
\zeta \coloneqq \inf\{t \ge 0 : \langle 1,X_t \rangle= 0\},
\label{zeta}
\end{equation}  
the time of extinction, and let 
\begin{equation}
\label{wdef}
w(x) \coloneqq \P_{\delta_{x}}(\zeta < \infty).
\end{equation} 
We will also frequently  use with 
\[
p(x): = 1-w(x), \qquad x\in E.
\]
 {\color{black} Recalling that  we need to take as a definition $w(\dagger) =1$,  by conditioning on $\mathcal{F}_t = \sigma(X_s, s\leq t)$, for $t\geq 0$,
\begin{equation}
w(x) = \E_{\delta_{x}}\left[\prod_{i =1}^{N_t} w(x_i(t))\right].
\label{fixedpoint}
\end{equation}
Taking \eqref{fixedpoint}, \eqref{genut} and \eqref{hatP} into account, it is easy to deduce that $w$ also solves 
\begin{equation}
w(x) = \hat\bP_t[w](x) + \int_0^t\bP_s\left[ G[w]\right](x) \D s, \qquad t\geq 0, x\in E.
\label{nonlinear}
\end{equation}
}
\smallskip

We will  assume: 
\bigskip

{\bf (M1): $\textstyle{\inf_{x\in E}w(x)>0}$ and  $w(x)<1$  for $x\in E$.}

\smallskip

Beyond this,  we assume relatively little about $\bP$ and $G$ other than:
\bigskip

{\bf (M2): The branching rate $\varsigma$ is uniformly bounded from above.}

 \smallskip

Re-writing \eqref{nonlinear} in the form
\[{\color{black}
w(x)= \mathbf{E}_x[w(\xi_{t\wedge\tk})] + \mathbf{E}_x\left[\int_0^{t\wedge \tk}  w(\xi_s) \frac{G[w](\xi_s)}{w(\xi_s)}\D s\right],\qquad  t\geq 0,}
\]
and noting that $\textstyle{\sup_{x\in E}G[w](x)/w(x)<\infty}$,
we can appeal to the method of exchanging exponential potential for additive potential\footnote{We will use this trick throughout this paper and  consistently refer to it as the `transfer of the exponential potential to the additive potential' and vice versa in the other direction.}  in e.g.~\cite[Lemma 1.2, Chapter 4, Part 1]{Dynkin2}, which   yields
\begin{equation}
w(x) = \mathbf{E}_x\left[w(\xi_{t\wedge\tk}) \exp\left(\int_0^{t\wedge\tk}\frac{G[w](\xi_s)}{w(\xi_s)}\d s\right)\right], \qquad x\in E, t\ge 0.
\label{exp}
\end{equation}

 This identity will turn out to be extremely useful in our analysis, in particular, the equality \eqref{exp} together with the Markov property of $\xi$ implies that the object in the expectation on the right-hand side of \eqref{exp} is a martingale.
\smallskip

In Theorem \ref{skeleton} below  we give the skeletal decomposition in the form of a theorem. In order to state this result, we first need to develop two notions of conditioning. As there is rather a lot of notation, we include a table in the Appendix which the reader may refer to as needed.

\smallskip

The basic pretext of the skeletal decomposition is that we want to split genealogical lines of descent into those that survive forever and those that reach a dead end. To this end, let $c_i(t)$ denote the label of a particle $i \in \{1, \dots, N_t\}$. We label a particle `prolific', denoted $c_i(t) =\, \uparrow$, if it has an infinite genealogical line of descent, and  $c_i(t) =\, \downarrow$, if its line of descent dies out (i.e. `non-prolific'). Ultimately, we want to describe how the spatial genealogical tree of the MBP can be split into  into a spatial genealogical sub-tree, consisting of $\uparrow$-labelled particles (the skeleton), which is dressed with trees of $\downarrow$-labelled particles.
\smallskip

Let $\P^\updownarrow = (\P^\updownarrow_{\delta_{x}}, x\in E)$ denote the probabilities of the two-labelled process described above. Then for $t\ge 0$ and $x \in E$ we have the following relationship between $\P^\updownarrow$ and $\P$:
\begin{equation}
\frac{\D\P^\updownarrow_{\delta_{x}}}{\D\P_{\delta_{x}}}\bigg|_{\mathcal{F}_{\infty}} = \prod_{i = 1}^{N_t}\left(\mathbf{1}_{(c_i(t) = \,\uparrow)} + \mathbf{1}_{(c_i(t) = \,\downarrow)} \right) = 1,
\end{equation}
where $\mathcal{F}_\infty = \sigma\left(\cup_{t\geq 0} \mathcal{F}_t\right)$.
Projecting onto $\mathcal{F}_t$, for $t\geq 0$, we have
\begin{align}
\frac{\D\P^\updownarrow_{\delta_{x}}}{\D\P_{\delta_{x}}}\bigg|_{\mathcal{F}_{t}} &= \E_{\delta_{x}}\left(\prod_{i = 1}^{N_t}\left(\mathbf{1}_{(c_i(t) =\,\uparrow)} + \mathbf{1}_{(c_i(t) =\,\downarrow)} \right) \bigg| \mathcal{F}_t\right)\notag\\
&= \sum_{I \subseteq \{1,\dots N_t\}}\prod_{i\in I}\P_{\delta_{x}}(c_i(t) =\,\uparrow |\mathcal{F}_t)\prod_{i\in \{1,\dots, N_t\}\backslash I}\P_{\delta_{x}}(c_i(t) =\,\downarrow |\mathcal{F}_t)\notag \\
&= \sum_{I \subseteq \{1,\dots N_t\}}\prod_{i\in I} p(x_i(t))\prod_{i\in \{1,\dots, N_t\}\backslash I}w(x_i(t)),
\label{updownCOM}
\end{align}
where we understand the sum to be taken over all subsets  of $\{1, \cdots, N_t\}$, each of which is denoted by $I$.
\smallskip

The decomposition in  \eqref{updownCOM} indicates the beginning point of how we break up the law of the $(\bP, G)$-MBP according to subtrees that are categorised as $\downarrow$ (with probability $w$) and subtrees that are categorised as $\uparrow$ with $\downarrow$ dressing (with probability $p$), the so-called {\it skeletal decomposition}.
\smallskip

In the next two sections we will examine the notion of our MBP conditioned to die out and conditioned to survive, respectively. Thereafter we will use the characterisation of these conditioned trees to formalise our skeletal decomposition. 


\subsection{$\downarrow$-trees}\label{parti}
Following~\cite{HHK}, let us start by characterising the law of genealogical trees populated by the marks $\downarrow$. Thanks to the branching property, it suffices to consider trees which are issued with a single particle with mark  $\downarrow$. By definition of the mark $c_\emptyset(0)  =\,\downarrow$, where $\emptyset$ is the initial ancestral particle, this is the same as understanding the law of $(X,\P)$  conditioned to become extinct. Indeed, for $A \in \mathcal{F}_t$,
\begin{align}
\P_{\delta_{x}}^\downarrow(A) &\coloneqq \P^\updownarrow_{\delta_{x}}(A | c_\emptyset(0) =\,\downarrow)\notag\\
&= \frac{\P^\updownarrow_{\delta_{x}}(A ; c_i =\,\downarrow, \text{ for each }i = 1, \dots, N_t)}{\P^\updownarrow_{\delta_{x}}(c_\emptyset(0) =\,\downarrow)}\notag\\
&= \frac{\E_{\delta_{x}}\left[\mathbf{1}_A\prod_{i=1}^{N_t}w(x_i(t))\right]}{w(x)}.\label{probredtree}
\end{align}
We are now in a position to characterise the MBP trees which are conditioned to become extinct (equivalently, with genealogical lines of descent which are marked entirely with $\downarrow$). Heuristically speaking, the next proposition shows that the conditioning creates a branching particle  process in which particles are prone to die out (whether that be due to being killed at the boundary under $\bP$, or suppressing offspring). Our proof is partly inspired by Proposition 11 of \cite{HHK}.

\begin{prop}[$\downarrow$ Trees]\label{black}
For initial configurations of the form $\textstyle{\nu = \sum_{i=1}^n\delta_{x_i}} $, for $n\in\mathbb{N}$ and $x_1,\cdots, x_n\in E$, define the measure $\mathbb{P}_\nu^\downarrow$ via ~\eqref{probredtree},  
\[
\mathbb{P}_\nu^\downarrow = \otimes_{i = 1}^n\P^\downarrow_{\delta_{x_i}},
\] 
i.e.  starting independent processes at positions $x_i$ each under $\mathbb{P}_{\delta_{x_i}}$, for $i =1,\cdots, n$.
Then under $\mathbb{P}_\nu^\downarrow$, $X$ is a $(\ebP^\downarrow, G^\downarrow)$-MBP with motions semigroup $\ebP^\downarrow$ and branching generator $G^\downarrow$ defined as follows. 
The motion semigroup $\ebP^\downarrow$ is that of the Markov process $\xi$ with probabilities $(\mathbf{P}^\downarrow_x, x\in E)$, where 
\begin{equation}
\left.\frac{\d \mathbf{P}^\downarrow_{x}}{\d \mathbf{P}_{x}}\right|_{\sigma(\xi_s, s\leq t)} = \frac{w(\xi_{t\wedge \etk})}{w(x)}\exp\left(\int_0^{t\wedge \etk} \frac{G[w](\xi_s)}{w(\xi_s)}\d s\right), \qquad t\geq 0.
\label{COMdown}
\end{equation}
For $x\in E$ and $f\in L_\infty^{+,1}(E)$, the branching generator is given by 
\begin{equation}
G^\downarrow[f]  = \frac{1}{w}\left[G[fw] - fG[w] \right],\label{redbmech}
\end{equation}
which may otherwise be identified as 
\[
G^\downarrow[f] = \varsigma^\downarrow(x)\mathcal{E}^\downarrow_{x}\Bigg[\prod_{j =1}^N f(x_j) - f(x)\Bigg],
\]
where
\begin{equation}
\varsigma^\downarrow(x) = \varsigma(x) + \frac{G[w](x)}{w(x)}
= \frac{\varsigma(x)}{w(x)}\mathcal{E}_{x}\Bigg[\prod_{j =1}^N w(x_j)\Bigg]
\qquad x\in E,
\label{redbrate}
\end{equation}
and
\begin{equation}
\left.\frac{\d \mathcal{P}^\downarrow_{x}}{\d \mathcal{P}_{x}}\right|_{\sigma(N, x_1, \dots, x_N)} 
= \frac{\prod_{i=1}^Nw(x_i)}{\mathcal{E}_{x}\left[\prod_{j =1}^N w(x_j)\right]}
= \frac{\varsigma(x)}{\varsigma^\downarrow(x)w(x)}\prod_{i=1}^Nw(x_i)
\label{COMcalP}
\end{equation}
\end{prop}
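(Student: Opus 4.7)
The plan is to identify the law of $X$ under $\mathbb{P}^\downarrow_\nu$ with that of the claimed $(\bP^\downarrow, G^\downarrow)$-MBP by matching their non-linear semigroups and appealing to uniqueness of the mild equation \eqref{genut}. First I would reduce to a single initial atom $\nu=\delta_x$: the multiplicative form of $\prod_{i=1}^{N_t} w(x_i(t))$ in \eqref{probredtree} combines with the Markov branching property under $\mathbb{P}_\nu$ to factorise the Radon-Nikodym derivative across independent subtrees, giving $\mathbb{P}^\downarrow_\nu = \otimes_{i=1}^n \mathbb{P}^\downarrow_{\delta_{x_i}}$ as asserted. That \eqref{probredtree} defines a probability measure amounts to showing that $\prod_{i=1}^{N_t}w(x_i(t))/w(x)$ is a unit-mean $\mathbb{P}_{\delta_x}$-martingale, which is immediate from the Markov branching property applied to the fixed-point identity \eqref{fixedpoint}. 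Similarly, \eqref{exp} combined with the Markov property of $\xi$ makes the density in \eqref{COMdown} a unit-mean $\mathbf{P}_x$-martingale, so $\mathbf{P}^\downarrow_x$ is well-defined.

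I would then dispatch the algebraic identities for $G^\downarrow$, $\varsigma^\downarrow$ and $\mathcal{P}^\downarrow$. Expanding $G[fw](x)-f(x)G[w](x)$ using \eqref{similar111} gives
\[
\varsigma(x)\,\mathcal{E}_x\!\left[\prod_{j=1}^{N}f(x_j)w(x_j) - f(x)\prod_{j=1}^{N}w(x_j)\right]\!,
\]
and dividing by $w(x)$, then multiplying and dividing by $\mathcal{E}_x[\prod_j w(x_j)]$, yields precisely $\varsigma^\downarrow(x)\bigl(\mathcal{E}^\downarrow_x[\prod_j f(x_j)]-f(x)\bigr)$ with $\varsigma^\downarrow$ and $\mathcal{P}^\downarrow$ as in \eqref{redbrate}--\eqref{COMcalP}. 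The equivalent form $\varsigma^\downarrow=\varsigma+G[w]/w$ is a one-line rearrangement of $G[w](x)/w(x) = \varsigma(x)\mathcal{E}_x[\prod_j w(x_j)]/w(x)-\varsigma(x)$.

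The main task is to identify the non-linear semigroup of $X$ under $\mathbb{P}^\downarrow$. By \eqref{probredtree} and the definition \eqref{MBPnonlinear}, for $f\in L_\infty^{+,1}(E)$,
\[
u^\downarrow_t[f](x) \;:=\; \mathbb{E}^\downarrow_{\delta_x}\!\Bigl[\textstyle\prod_{i=1}^{N_t} f(x_i(t))\Bigr] \;=\; \frac{u_t[fw](x)}{w(x)}.
\]
It then suffices to verify that $u^\downarrow_t[f]$ satisfies the mild equation \eqref{genut} with $(\bP, G)$ replaced by $(\bP^\downarrow, G^\downarrow)$; uniqueness of bounded solutions to that equation (inherited from the standing assumptions via the same Gronwall-type argument used in \cite{SNTE}) will then identify the law of $X$ under $\mathbb{P}^\downarrow$ with the candidate MBP. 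Starting from \eqref{genut} for $u_t[fw]$ and dividing by $w(x)$, I would invoke the Feynman--Kac transfer between exponential and additive potentials (the same tool already used to derive \eqref{exp}) to absorb the Doob $h$-transform factor $w(\xi_{t\wedge\tk})/w(x)$ together with the exponential weight $\exp(\int_0^{t\wedge\tk} G[w](\xi_s)/w(\xi_s)\,\d s)$ into the semigroup $\bP^\downarrow$, thereby converting $\hat{\bP}_t[fw]/w$ into $\hat{\bP}^\downarrow_t[f]$ and $\bP_s[G[u_{t-s}[fw]]]/w$ into $\bP^\downarrow_s[G^\downarrow[u^\downarrow_{t-s}[f]]]$.

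The delicate step in this last paragraph is carrying out the Feynman--Kac transfer simultaneously in the linear and non-linear terms while keeping track of the $h$-transform and the exponential potential. The linchpin is the algebraic identity \eqref{redbmech}, which, rewritten in the form
\[
G^\downarrow\!\bigl[u^\downarrow_{t-s}[f]\bigr]\;=\;\frac{1}{w}\Bigl( G\bigl[u_{t-s}[fw]\bigr] \;-\; u^\downarrow_{t-s}[f]\,G[w]\Bigr),
\]
is precisely what makes the non-local branching generator $G^\downarrow$ emerge cleanly after the combination of $h$-transform and potential $G[w]/w$ has been re-absorbed into $\bP^\downarrow$. This is the new ingredient beyond the local-branching setting of Proposition~11 of \cite{HHK}: without \eqref{redbmech} one cannot re-close the identity as a mild equation of the correct form, because the non-local piece $\mathcal{E}_x[\prod_j u_{t-s}[fw](x_j)]$ does not factorise against $w(x)$ on the nose.
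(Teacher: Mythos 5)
Your proposal follows essentially the same route as the paper: reduce to a single ancestor, express the transformed non-linear semigroup as $u^\downarrow_t[f]=u_t[fw]/w$, plug into the mild equation \eqref{genut}, and use the Feynman--Kac transfer of exponential to additive potential (adding and subtracting a $G[w]/w$ term) so that the $h$-transform and exponential weight are absorbed into $\bP^\downarrow$ and the non-local piece recombines into $G^\downarrow$ via \eqref{redbmech}. The algebraic identification of $\varsigma^\downarrow$ and $\mathcal{P}^\downarrow$ is likewise what the paper does.

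There is, however, one genuine logical gap. Matching $u^\downarrow_t[f]$ against the mild equation for $(\bP^\downarrow,G^\downarrow)$ and invoking uniqueness only identifies the \emph{one-time marginal} of $X_t$ under $\mathbb{P}^\downarrow_{\delta_x}$ with that of the candidate MBP; it does not by itself pin down the full path law. Before the mild-equation step can ``suffice,'' you must verify that $(X,\mathbb{P}^\downarrow)$ itself enjoys the Markov branching property. Your observation about factorising the Radon--Nikodym derivative across initial subtrees is the right germ of the argument, but it is stated only at time $0$; the paper instead proves (its display \eqref{itisNBP}) that for all $s,t\ge 0$,
\[
\mathbb{E}^\downarrow_{\delta_{x}}\!\left[\left.\prod_{i = 1}^{N_{t+s}}g(x_i(t+s))\right|\mathcal{F}_t\right]
= \frac{1}{w(x)}\prod_{i = 1}^{N_{t}} w(x_i(t))\,u^\downarrow_s[g] (x_i(t)),
\]
which is exactly the conditional factorisation certifying that, given $\mathcal{F}_t$, each time-$t$ particle launches an independent $\mathbb{P}^\downarrow$ subtree. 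You have all the ingredients to prove this (the multiplicative form of the density together with the branching Markov property of $X$ under $\mathbb{P}$), but it must appear explicitly before the uniqueness argument can legitimately upgrade agreement of non-linear semigroups to equality of laws.
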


\begin{proof}[Proof of Proposition \ref{black}] First let us show that the change of measure results in a particle process that respects the Markov branching property.
In a more general sense, for $\nu$ as in the statement of this proposition, \eqref{probredtree} takes the form 
\[
\left.\frac{\d \mathbb{P}^\downarrow_\nu}{\d \mathbb{P}_\nu}\right|_{\mathcal{F}_t} = \frac{\prod_{i = 1}^{N_t} w(x_i(t))}{\prod_{i=1}^n w(x_i)}
\]
It is clear from the conditioning that every particle in the resulting process under the new  measure $\mathbb{P}^\downarrow_\nu$ must carry the mark $\downarrow$, i.e. be non-prolific, by construction. 

\smallskip

Let us define, for $g\in L_\infty^{+,1}(E)$, 
\begin{equation}
u^\downarrow_t[g](x) = \E^\updownarrow_{\delta_{x}}\left[\left.\prod_{i = 1}^{N_t} g(x_i(t))\right| c_\emptyset(0) =\,\downarrow\right] = \frac{1}{w(x)}u_t[w g](x),
\label{udown}
\end{equation}
which describes the evolution of the the process $X$ under $\mathbb{P}^\downarrow$. In particular, for $g\in L_\infty^{+,1}(E)$, $x\in E$ and $s,t\geq 0$, note that
\begin{align}
\mathbb{E}^\downarrow_{\delta_{x}}\left[\left.\prod_{i = 1}^{N_{t+s}}g(x_i(t+s))\right|\mathcal{F}_t\right]
& =\frac{1}{w(x)}\prod_{i = 1}^{N_{t}}w(x_i(t))
\mathbb{E}_{\delta_{x}}\left[\left.\frac{\prod_{j = 1}^{N^i_{s}}w(x_j^i(s))g(x_j^i(s))}{w(x_i(t))}\right|\mathcal{F}_t\right]\notag\\
&= \frac{1}{w(x)}\prod_{i = 1}^{N_{t}} w(x_i(t))u^\downarrow_s[g] (x_i(t)) ,
\label{itisNBP}
\end{align}
where, given $\mathcal{F}_t$, $((x_j^i(t)), j = 1,\cdots, {N}^i_s)$ are the physical configurations of particles at time $t+s$ that are descendent from  particle  $i\in N_t$. This ensures the Markov branching property holds.

\smallskip

 It thus suffices for the remainder of the proof to show,  in the spirit of \eqref{non-linear}, that, for $g\in L_\infty^{+,1}(E)$, 
 \begin{equation}
u^\downarrow_t[g](x) = \hat\bP^\downarrow_t[g](x) +\int_0^t \bP^\downarrow_s[G^\downarrow[u^\downarrow_{t-s}[g]](x)\d s, \qquad t\geq 0, x\in E.
\label{rednonlinearNTE}
\end{equation}
holds, where $\hat{\bP}^\downarrow$ is defined in a similar spirit to \eqref{hatP}, which is the semigroup evolution equation for a $(\bP^\downarrow, G^\downarrow)$-MBP, and to identify the internal structure of $G^\downarrow$.
\smallskip

From \eqref{genut} and \eqref{udown} it follows that, for $g\in L_\infty^{+,1}(E)$, 
{\color{black}
\begin{align}
\label{downarrownon-linear}
u^\downarrow_t[g] 
&= \frac{1}{w}\hat\bP_t[wg] +\int_0^t \frac{1}{w}\bP_s[G[wu^\downarrow_{t-s}[g]]\d s, \qquad t\geq 0,
\end{align}
}

In the spirit of the derivation of \eqref{exp}, we can 
apply~\cite[Lemma 1.2, Chapter 4, Part 1]{Dynkin1} and use \eqref{COMdown} and \eqref{downarrownon-linear} to get, for $x\in E$,
\begin{align*}
u^\downarrow_t[g](x)
&= {\color{black}\frac{1}{w(x)}\hat\bP_t[wg](x) }
+\int_0^t \frac{1}{w(x)}\bP_s\left[w\frac{G[wu^\downarrow_{t-s}[g]]}{w}\right](x)\d s\\
&+\int_0^t \frac{1}{w(x)}\bP_s\left[\frac{G[w]}{w}wu^\downarrow_{t-s}[g]\right](x)\d s
-\int_0^t  \frac{1}{w(x)}\bP_s\left[\frac{G[w]}{w}w u^\downarrow_{t-s}[g]\right](x)\d s\\
&={\color{black}\frac{1}{w(x)}\mathbf{E}_x\left[g(\xi_{t\wedge\tk}) w(\xi_{t\wedge\tk})
{\rm e}^{\int_0^{t\wedge\tk} \frac{G[w](\xi_u)}{w(\xi_u)}\d u}\right]}\\
&+ \frac{1}{w(x)}\mathbf{E}_x\left[
\int_0^{\color{black} t\wedge\tk}
\frac{G[wu^\downarrow_{t-s}[g]](\xi_s)}{w(\xi_s)}w(\xi_s)
{\rm e}^{\int_0^s \frac{G[w](\xi_u)}{w(\xi_u)}\d u}\d s\right]\\
&-  \frac{1}{w(x)}
\mathbf{E}_x\left[\int_0^{\color{black} t\wedge \tk}
\frac{G[w](\xi_s)}{w(\xi_s)} u^\downarrow_{t-s}[g](\xi_s)w(\xi_s)
{\rm e}^{\int_0^s \frac{G[w](\xi_u)}{w(\xi_u)}\d u}
\d s\right]\\
&= {\color{black}\hat\bP^\downarrow_t[g](x)} +\int_0^t \bP^\downarrow_s\left[\frac{G[wu^\downarrow_{t-s}[g]]}{w}\right](x)\d s-\int_0^t  \bP^\downarrow_s\left[\frac{G[w]}{w}u^\downarrow_{t-s}[g]\right](x)\d s\\
&= {\color{black}\hat\bP^\downarrow_t[g](x)}+\int_0^t \bP^\downarrow_s\left[G^\downarrow[u^\downarrow_{t-s}[g]\right](x)\d s
\end{align*}
where   we have used the definition \eqref{redbmech}.

\smallskip

It remains to identify the internal structure of $G^\downarrow $. Taking as a pre-emptive definition $\varsigma^\downarrow := \varsigma + w^{-1}G[w]$, we have, for $f\in L_\infty^{+,1}(E)$,
\begin{align*}
G^\downarrow[f](x) &= \frac{1}{w(x)}\left[G[fw] - fG[w] \right](x)\notag\\
&= \frac{1}{w(x)}\left[\varsigma(x) \mathcal{E}_{x}\left[\prod_{i = 1}^Nf(x_i)w(x_i)\right] - \varsigma(x)f(x)w(x) -  fG[w](x)\right]\notag\\
&= \frac{\varsigma(x)}{w(x)}\mathcal{E}_{x}\left[\prod_{i = 1}^Nf(x_i)w(x_i) \right]- \left(\varsigma(x) + \frac{G[w]}{w}(x) \right)f(x)\notag\\
&= \varsigma^\downarrow(x)\left(\frac{\varsigma(x)}{\varsigma^\downarrow(x)w(x)}\mathcal{E}_{x}\left[\prod_{i=1}^Nw(x_i) f(x_i)\right]- f(x)\right),\notag
\end{align*}
Moreover, recalling the change of measure \eqref{COMcalP},
note that, for $x\in E$,  $\mathcal{P}^\downarrow_{x}$ is a probability measure  on account of the fact that, when we set $f\equiv 1$, recalling again that $\varsigma^\downarrow = \varsigma + w^{-1}G[w]$ as well as the definition of $G$ given in \eqref{similar111},
\[
\mathcal{E}_{x}\left[\frac{\varsigma(x)}{\varsigma^\downarrow(x)w(x)}\prod_{i=1}^Nw(x_i)\right]=
\frac{ G[w](x) + \varsigma(x)w(x)}{\varsigma(x)+ w^{-1}(x)G[w](x)}\frac{1}{w(x)} = 1
\]

as required.
\end{proof}


\subsection{$\updownarrow$-trees}\label{part2}
In a similar spirit to the previous section we can look at the law of our MBP, when issued from a single ancestor, conditioned to have a subtree of prolific individuals.  As such, for $A \in \mathcal{F}_t$, we define 
\begin{align}
 \P^\updownarrow_{\delta_{x}}(A | c_\emptyset(0) =\,\uparrow)&= \frac{\P^\updownarrow_{\delta_{x}}(A ; c_i =\,\uparrow, \text{ for at least one }i = 1, \dots, N_t)}{\P^\updownarrow_{\delta_{x}}(c_\emptyset(0) =\,\uparrow)}\notag\\
&= \frac{\E_{\delta_{x}}\left[\mathbf{1}_A\left(1 - \prod_{i=1}^{N_t}w(x_i(t))\right)\right]}{p(x)}.\label{probluetree}
\end{align}
In the next proposition, we want to describe our MBP under $\P^\updownarrow_{\delta_{x}}(\cdot | c_\emptyset(0) =\,\uparrow)$. In order to do so, we first need to introduce a type-$\uparrow$-type-$\downarrow$ MBP.

%
%
%
\smallskip

Our type-$\uparrow$-type-$\downarrow$ MBP process, say $X^\updownarrow =(X^\updownarrow_t, t\geq0)$, has an ancestor which is of type-$\uparrow$. We will implicitly assume (and suppress from the notation $X^\updownarrow$) that $X^\updownarrow_0=\delta_x$, for $x \in E$. Particles in $X^\updownarrow$ of  type-$\uparrow$ move as a $\bP^\uparrow$-Markov process. When a branching event occurs for a type-$\uparrow$ particle, both type-$\uparrow$ and type-$\downarrow$ particles may be produced, but always at least one type-$\uparrow$ is produced. Type-$\uparrow$ particles may be thought of as offspring and any additional type-$\downarrow$ particles may be thought of as immigrants.  Type-$\downarrow$ particles that are created can only subsequently produce type-$\downarrow$ particles in such a way that they give rise to a $(\bP^\downarrow,G^\downarrow)$-MBP. 

\smallskip

The joint branching/immigration rate of type-$\uparrow$ and type-$\downarrow$ particles in  $X^\updownarrow$ at $x\in E$ is given by
\begin{align}
\varsigma^\updownarrow(x) 
&=\frac{\varsigma(x)}{p(x)}\,
\mathcal{E}_{x}\left[1-\prod_{j =1}^N w(x_j)\right].
\label{bluebrate}
\end{align}
We can think of the branching rate in \eqref{bluebrate} as the original rate $\varsigma(x)$ multiplied by the probability (under $\mathcal{P}_x$) that at least one of the offspring are of type-$\uparrow$, given the branching particle is of type-$\uparrow$
\smallskip

At a branching/immigration event of a type-$\uparrow$ particle, we will write $N^\uparrow$ and $(x^\uparrow_i, i = 1, \cdots, N^\uparrow)$
for the number and positions of type-$\uparrow$ offspring and  
 $N^\downarrow$ and $(x^\downarrow_j, j = 1,\cdots, N^\downarrow)$  for the number and  positions of  type-$\downarrow$ immigrants. We will write $(\mathcal{P}^\updownarrow_{x}, x\in E)$ for the joint law of the the  random variables in the previous sentence.  Formally speaking, the branching generator, $G^\updownarrow$,  of offspring/immigrants for a type-$\uparrow$ particle positioned at $x\in E$ is written
\begin{equation}
G^\updownarrow[f,g](x) =\varsigma^\updownarrow(x)
\left(\mathcal{E}^{\updownarrow}_{x}\left[\prod_{i = 1}^{N^\uparrow} f(x^\uparrow_i)\prod_{j = 1}^{N^\downarrow} g(x^\downarrow_j)\right] - f(x)\right)
\label{Gupdown0}
\end{equation}
for $f,g\in L_\infty^{+,1}(E)$.

\smallskip

For our process $X^\updownarrow$, for each $x\in E$, we will define the laws  $\mathcal{P}^\updownarrow_{x}$ in terms of  an additional random selection from $(x_i, i= 1,\cdots, N)$ under  $\mathcal{P}_{x}$. Write 
$\mathcal{N}^\uparrow$ for the set of indices in $\{1,\cdots, N\}$, which, together, identify the type-$\uparrow$ particles, i.e. $(x_i, i\in \mathcal{N}^\uparrow) = (x^\uparrow_j, j = 1,\cdots, N^\uparrow)$. The remaining indices $\{1,\cdots, N\}\setminus\mathcal{N}^\uparrow$ will identify the type-$\downarrow$ immigrants from $(x_i, i= 1,\cdots, N)$. Thus, to describe $\mathcal{P}^\updownarrow_{x}$, for any $x\in E$, it suffices to  give the law of $(N; x_1,\dots,x_N ; \mathcal{N}^\uparrow)$. To this end, for $F\in \sigma(N; x_1,\dots,x_N)$ and $I\subseteq \N$, we will set
\begin{align}
 \mathcal{P}^\updownarrow_{x}(F\cap \{\mathcal N^\uparrow=I\})
&:= 
\mathbf{1}_{\{  |I|\geq 1\}}
\frac{\varsigma(x)}{\varsigma^\updownarrow(x)p(x)}\,
\mathcal{E}_x\left[
\mathbf{1}_{F\cap\{I\subseteq \{1,\dots,N\}\}}\,
\prod_{i\in I}p(x_i)\prod_{i \in \{1, \dots, N\}\backslash I}w(x_i)\,\,
\right].
\label{PcalupdownCOM}
\end{align}
Said another way, 
 for all $I\subseteq\N$, 
\begin{equation}
\mathcal{P}^\updownarrow_{x}(\mathcal N^\uparrow=I |  \sigma(N; x_1,\dots,x_N)) 
:= 
\mathbf{1}_{\{  |I|\geq 1\}\cap \{I\subseteq \{1,\dots,N\}\}}
\frac{\prod_{i\in I}p(x_i)\prod_{i \in \{1, \dots, N\}\backslash I}w(x_i)}
{1-\mathcal{E}_{x}\left[\prod_{j =1}^N w(x_j)\right]}.
\end{equation}

\smallskip

The pairs $(x^\uparrow_i, i = 1, \cdots, N^\uparrow)$ and $(x^\downarrow_j, j = 1,\cdots, N^\downarrow)$ under $(\mathcal{P}^\updownarrow_x, x\in E)$ in \eqref{Gupdown0} can thus be seen as equal in law to selecting the type of each particle following an independent sample of the non-local branching configuration 
$(x_1,\dots, x_N)$ under $\mathcal{P}_{x}$,
where each $x_k$ is independently assigned either as type-$\uparrow$ with probability $p(x_k)$ or as type-$\downarrow$ with probability $w(x_k)=1-p(x_k)$, 
but then conditional on there being at least one type-$\uparrow$. 

\smallskip

As such with the definitions above, it is now a straightforward exercise to identify the branching generator in \eqref{Gupdown0} in terms of $(x_i, i = 1,\cdots, N)$ under $(\mathcal{P}_x, x\in E)$ via the following identity, for $x\in E$,
\begin{multline}
G^\updownarrow[f,g](x) 
= 
\frac{\varsigma(x)}{p(x)}
\mathcal{E}_{x} \Bigg[ 
\sum_{\stackrel{I \subseteq \{1, \dots, N\}}{|I|\geq 1}}
\prod_{i\in I}p(x_i)f(x_i)\prod_{i \in \{1, \dots, N\}\backslash I}w(x_i) g(x_i)
\Bigg] 
-\varsigma^\updownarrow(x)f(x)
\label{Gupdown}
\end{multline}

\begin{prop}[Dressed $\uparrow$-trees]\label{new} For $x\in E$, the process $X^{\updownarrow}$ 
is equal in   law to  $X$ under $\P^\updownarrow_{\delta_{x}}(\cdot | c_\emptyset(0) =\,\uparrow)$. Moreover, both are equal in law to  to a dressed $(\ebP^\uparrow, G^\uparrow)$-MBP, say $X^\uparrow$, where
the motion semigroup $\ebP^\uparrow$ corresponds to the  Markov process $\xi$ on $E\cup\{\dagger\}$ with probabilities $(\mathbf{P}^\uparrow_x, x\in E)$ given by  (recalling that $p$ is valued 0 on $\dagger$)
\begin{equation}
\left.\frac{\d \mathbf{P}^\uparrow_{x}}{\d \mathbf{P}_{x}}\right|_{\sigma(\xi_s, s\leq t)} = \frac{p(\xi_t)}{p(x)}\exp\left(-\int_0^t \frac{G[w](\xi_s)}{p(\xi_s)}\d s\right), \qquad t\geq 0,
\label{pmg}
\end{equation}
and the branching generator is given by 
\begin{equation}
G^\uparrow[f] = \frac{1}{p}\left(G[pf + w] - (1-f)G[w]\right), \qquad f\in L_\infty^{+,1}(E).
\label{bluebmech}
\end{equation}
The dressing consists of  additional particles, which are immigrated non-locally in space at the branch points of $X^\uparrow$, 
with each immigrated particle continuing to evolve as an independent copy of $(X^\downarrow, \mathbb{P^\downarrow})$ from their respective space-point of immigration, such that the joint branching/immigration generator of type-$\uparrow$ offspring and type-$\downarrow$ immigrants is given by \eqref{Gupdown}.
\end{prop}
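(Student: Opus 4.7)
The plan is to mirror the proof of Proposition \ref{black}, working with the change of measure \eqref{probluetree} and the fact that, conditional on $\mathcal{F}_t$ and by the Markov branching property, each of the $N_t$ subtrees survives independently with probability $p(x_i(t))$. Since $\P^\updownarrow_{\delta_{x}}(\cdot \mid c_\emptyset(0) = \uparrow)$ is, by construction, the law of $X$ endowed with $\uparrow/\downarrow$ labels determined by future survival, the substance of the proposition lies in showing that this labelled process splits into a $(\bP^\uparrow, G^\uparrow)$-backbone dressed at its branch points by independent copies of the $\downarrow$-tree built in Proposition \ref{black}.

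First I would introduce the two-function non-linear semigroup
\[
u^\updownarrow_t[f,g](x) := \E^\updownarrow_{\delta_{x}}\!\left[\prod_{i:\, c_i(t)=\uparrow}\! f(x_i(t)) \prod_{j:\, c_j(t)=\downarrow}\! g(x_j(t)) \,\Bigg|\, c_\emptyset(0) = \uparrow\right], \qquad f,g \in L_\infty^{+,1}(E).
\]
Using \eqref{probluetree} together with the conditional independence of the subtree survival events given $\mathcal{F}_t$, a direct computation yields
\[
p(x)\, u^\updownarrow_t[f,g](x) = u_t[pf + wg](x) - u_t[wg](x).
\]
The Markov branching property of $X$ under the conditioned law (analogous to \eqref{itisNBP}) follows from the corresponding multiplicative structure on $\mathcal{F}_t$, so it remains to recognise the right-hand side as the non-linear semigroup of the claimed dressed process.

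The core technical step is to convert this subtracted identity into the non-linear mild equation
\[
u^\updownarrow_t[f,g](x) = \hat{\bP}^\uparrow_t[f](x) + \int_0^t \bP^\uparrow_s\!\left[G^\updownarrow[u^\updownarrow_{t-s}[f,g],\, u^\downarrow_{t-s}[g]]\right](x)\,\d s
\]
expected of a $(\bP^\uparrow, G^\uparrow)$-MBP with non-local $\downarrow$-immigration at branch points. This is achieved by writing the mild equation \eqref{genut} for each of $u_t[pf+wg]$ and $u_t[wg]$, subtracting, dividing by $p(x)$, and applying the standard exchange of exponential potential for additive potential (cf.~\cite[Lemma 1.2, Chapter 4, Part 1]{Dynkin2}), exactly as in the proof of Proposition \ref{black}. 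The multiplicative factor $p(\xi_t)/p(x)$ and the additive potential $-G[w](\xi_s)/p(\xi_s)$ appearing in \eqref{pmg} then emerge naturally; the identities $u_t[w]=w$, which follows from \eqref{fixedpoint} (or equivalently \eqref{exp}), and $u_t[wg] = w\, u^\downarrow_t[g]$ from \eqref{udown} are the two key substitutions that allow one to isolate the $\downarrow$-immigration contribution and recognise the joint generator in the form \eqref{Gupdown}.

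Specialising to $g \equiv 1$ (i.e.\ tracking only the $\uparrow$-backbone and discarding the immigration) produces the marginal equation for the backbone alone, and a short algebraic check verifies $G^\updownarrow[f,1] = G^\uparrow[f]$ in the form \eqref{bluebmech}. Finally, that $\varsigma^\updownarrow$ is finite and that $\mathcal{P}^\updownarrow_{x}$ defines a bona fide probability measure on $\{|\mathcal{N}^\uparrow| \geq 1\}$ is confirmed, as at the end of the proof of Proposition \ref{black}, by substituting $f = g = 1$. The main obstacle I anticipate is managing the two-function algebra through the exchange-of-potentials step while keeping the cemetery-state conventions (so that $p(\dagger) = 0$, $w(\dagger) = 1$, and the $t\wedge\tk$ truncations align consistently between $\bP, \hat{\bP}, \bP^\uparrow$ and $\hat{\bP}^\uparrow$) compatible with the non-local form \eqref{similar111} of the branching mechanism.
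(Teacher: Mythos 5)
Your route is correct, and it is a genuinely different and somewhat more streamlined argument than the one in the paper. The paper's proof of Proposition \ref{new} breaks the expectation defining $u^\updownarrow_t$ over the first branching event of the underlying process, which produces a rather large intermediate identity (their ``massive'' equation) summing over all possible markings of the first offspring, and only then performs the exchange of exponential/additive potentials. You instead exploit, via the conditional independence of subtree-survival given $\mathcal{F}_t$, the clean algebraic identity
\[
p(x)\,u^\updownarrow_t[f,g](x) = u_t[pf+wg](x) - u_t[wg](x),
\]
and then apply the already-established mild equation \eqref{genut} to each term, subtract, divide by $p$, and exchange potentials. This mirrors the proof of Proposition \ref{black} more faithfully (where \eqref{udown} plays exactly the role your subtracted identity plays here), bypasses the first-transition expansion entirely, and makes the linearity in $u_t$ do the combinatorial work that the paper handles by summing over subsets $I$. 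One small technical point you should make explicit: when you subtract the mild equations, the two branching integrands combine to
\[
\frac{1}{p}\bigl(G[p\,u^\updownarrow_{t-s}[f,g] + w\,u^\downarrow_{t-s}[g]] - G[w\,u^\downarrow_{t-s}[g]]\bigr) = G^\updownarrow[u^\updownarrow_{t-s}[f,g], u^\downarrow_{t-s}[g]] - \frac{G[w]}{p}\,u^\updownarrow_{t-s}[f,g],
\]
and it is precisely the extra term $-\tfrac{G[w]}{p}u^\updownarrow_{t-s}[f,g]$ that cancels the additive potential created when you pass from $\bP$ to $\bP^\uparrow$ via \eqref{pmg}; without verifying this cancellation explicitly, the appeal to Dynkin's lemma is not self-contained. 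Also note that $\bP^\uparrow$ is conservative on $E$ (since $p(\dagger)=0$ kills the killing), so the $\hat\bP^\uparrow$ you write in the target mild equation is redundant; the paper remarks on this. With those points spelled out, your proof delivers exactly \eqref{updownevolve}, and your final check $G^\updownarrow[f,1] = G^\uparrow[f]$ and the normalisation of $\mathcal{P}^\updownarrow_x$ by setting $f=g=1$ go through as you indicate.
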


\begin{proof}[Proof of Proposition \ref{new}] We may think of  $((x_i(t), c_i(t)), i\leq N_t)$, $t\geq0$, under $\mathbb{P}^\updownarrow$ as a two-type branching process. 
To this end, let us write $N^\uparrow_t = \textstyle{\sum_{i =1}^{N_t} \mathbf{1}_{(c_i(t) = \,\uparrow)}}$ and $N^\downarrow = N_t -N_t^\uparrow$, for $t\geq 0$. Define, for $f,g\in L_\infty^{+,1}(E)$,
\begin{align}
u^\updownarrow_t[f, g](x)
&= \mathbb{E}^\updownarrow_{\delta_{x}}\left[\left.\Pi_t[f,g] \right| c_\emptyset(0) = \,\uparrow \right], \qquad t\geq 0.
\label{uupdown}
\end{align}
where, for $t\geq 0$,
\[
\Pi_t[f,g] =\prod_{i =1}^{N_t^\uparrow}p(x^\uparrow_i(t)) f(x^\uparrow_i(t))\prod_{j=1}^{N^\downarrow_t}w(x^\downarrow_j(t)) g(x^\downarrow_j(t)),
\]
where 
\[
(x^\uparrow_i(t), i = 1,\cdots, N^\uparrow_t) = (x_i(t) \text{ such that } c_i(t) = \uparrow, \,i\leq N_t)
\] and $(x^\downarrow_i(t), i = 1,\cdots, N^\downarrow)$ is similarly defined.

\smallskip

We can break the expectation in the definition of $u^\updownarrow_t[f, g]$ over the first branching event, noting that until that moment, the initial ancestor is necessarily prolific.  We have (again remembering  $p(\dagger)=0$)
\begin{align}
&u^\updownarrow_t[f, g](x)\notag \\
&= \frac{\mathbb{E}_{\delta_{x}}\left[\Pi_t[f,g] \mathbf{1}_{( c_\emptyset(0) = \,\uparrow )}\right]}{\mathbb{P}_{\delta_{x}}(c_\emptyset(0) = \,\uparrow )}\notag\\
&= \frac{1}{p(x)} \mathbf{E}_x\left[p(\xi_t)f(\xi_t){\rm e}^{-\int_0^t \varsigma(\xi_u)\d u}\right]\notag\\
&+\frac{1}{p(x)} \mathbf{E}_x\Bigg[ \int_0^{t} 
p(\xi_s)\frac{\varsigma(\xi_s)}{p(\xi_s)}{\rm e}^{-\int_0^s \varsigma(\xi_u)\d u}\notag\\
&
\hspace{3cm}\mathcal{E}_{\xi_s}
\Bigg[ 
\sum_{\stackrel{I \subseteq \{1, \dots, N\}}{|I|\geq 1}}
\prod_{i\in I}p(x_i)u^\updownarrow_{t-s}[f,g](x_i) \prod_{i \in \{1, \dots, N\}\backslash I}w(x_i) u^\downarrow_{t-s}[g](x_i)
\Bigg]\d s\Bigg].
\label{massive}
\end{align}
To help the reader interpret  \eqref{massive} better, we note that the first term on the right-hand side comes from the event that no branching occurs up to time $t$, in which case the initial ancestor is positioned at $\xi_t$. Moreover, we have used the fact that   $\mathbb{P}_{\delta_x}(c_\emptyset(0) = \uparrow|\mathcal{F}_t) = p(\xi_t)$. The second term is the consequence of a branching event occurring at time $s\in[0,t]$, at which point in time, the initial ancestor is positioned at $\xi_s$ and thus has  offspring  scattered at $(x_i, i =1 \cdots, N)$ according to $\mathcal{P}_{\xi_s}$. The contribution thereof from time $s$ to $t$, 
can be either captured by $u^\updownarrow_{t-s}[f,g]$, with probability $p$, if a given offspring is of type-$\uparrow$ (thereby growing a tree of particles marked both $\uparrow$ and $\downarrow$), or captured by $u^\downarrow_{t-s}[g]$, with probability $w$, if a given offspring is of type-$\downarrow$ (thereby growing a tree of particles marked only with $\downarrow$). Hence projecting the expectation of $\Pi_t[f,g]\mathbf{1}_{(c_\emptyset = \uparrow)}$ onto the given  configuration $(x_i, i =1 \cdots, N)$ at time $s$, we get the sum inside the expectation with respect to $\mathcal{P}_{\xi_s}$, which caters for all the possible markings of the offspring of the initial ancestor, ensuring that at least one of them is $\uparrow$ (which guarantees $c_\emptyset(0) = \uparrow$).
In both expectations, the event of killing is accommodated for the fact that $p(\dagger) =f(\dagger) = \varsigma(\dagger)= 0$.

%
\smallskip

We may now substitute 
and \eqref{Gupdown}   into \eqref{massive} to get
\begin{align}
&u^\updownarrow_t[f, g](x)\notag \\
&= \frac{1}{p(x)} \mathbf{E}_x\left[p(\xi_t)f(\xi_t){\rm e}^{-\int_0^t \varsigma(\xi_u)\d u}\right]\notag\\
&+\frac{1}{p(x)} \mathbf{E}_x\Bigg[ \int_0^{t} 
p(\xi_s)\frac{\varsigma(\xi_s)}{p(\xi_s)}{\rm e}^{-\int_0^s \varsigma(\xi_u)\d u}
\left[G^\updownarrow[u^\updownarrow_{t-s}[f,g],u^\downarrow_{t-s}[g]](\xi_s)
+\varsigma^\updownarrow(\xi_s)u^\updownarrow_{t-s}[f,g](\xi_s)
\right]\d s\Bigg].
\label{massive2}
\end{align}

Next, recalling the first equality in \eqref{bluebrate} that $\varsigma(x) = \varsigma^\updownarrow(x) +  {G[w](x)}/{p(x)}$, in each of the terms on the right-hand side of \eqref{massive2}, we can exchange the exponential potential $\textstyle{\exp(-\int_0^\cdot \varsigma(\xi_u)\d u )}$ for the exponential potential $\textstyle{\exp(-\int_0^\cdot {G[w](\xi_u)}/{p(\xi_u)}\d u )}$ by transferring the the difference  in the exponent to an additive potential (cf. Lemma 1.2, Chapter 4 in \cite{Dynkin2}). In this exchange, the term  $\varsigma^\updownarrow(\xi_\cdot)u^\updownarrow_{t-s}[f,g](\xi_\cdot)$ is cancelled out on the right-hand side of \eqref{massive2}. Then recalling the change of measure  \eqref{pmg} that defines the semigroup $\bP^\uparrow$, we get,  on $E$,
\begin{align}
u^\updownarrow_t[f, g] &= \bP^\uparrow_t[f] + \int_0^t \bP^\uparrow_s\bigg[ G^\updownarrow[u^{\updownarrow}_{t-s}[f,g], u^\downarrow_{t-s}[g]] 
\bigg]\d s, \qquad t\geq0.
\label{updownevolve}
\end{align} 
{\color{black}(Note, there is no need to define the object $\hat\bP^\uparrow$ in the sprit of \eqref{hatP} as the semigroup $\bP^\uparrow$ is that of a conservative process on $E$.)}
This is the semigroup of a two-type MBP in which $\downarrow$-marked particles immigrate off an $\uparrow$-marked MBP. We have yet to verify however that the $\uparrow$-marked MBP is in fact the previously described $(\bP^\uparrow, G^\uparrow)$-MBP.  In order to do this, we need to show that $G^\uparrow[f] =G^\updownarrow[f,1]$, for all $f\in L_\infty^{+,1}(E)$, where $G^\uparrow$ was given in \eqref{bluebmech}.

\smallskip

To this end, let us note two computational facts.  First, for any $x \in E$,
\begin{align}
1 &= \mathcal{E}_{x}\left[\prod_{i = 1}^N( p(x_i) + w(x_i))\right]= \mathcal{E}_{x}\left[\sum_{I \subseteq \{1, \dots, N\}}\prod_{i \in I}p(x_i)\prod_{i \in \{1, \dots, N\}\backslash I}w(x_i)\right],\label{e1}
\end{align}
so that
\begin{align}
\varsigma^\updownarrow(x) &= \frac{\varsigma(x)}{p(x)} \mathcal{E}_{x}\left[\sum_{\stackrel{I \subseteq \{1, \dots, N\}}{|I|\geq 1}}\prod_{i \in I}p(x_i)\prod_{i \in \{1, \dots, N\}\backslash I}w(x_i)\right] .
\label{e2}
\end{align}

\smallskip

Second, recalling \eqref{similar111}, note 
\[
G[f](x) -G[g](x) =  \varsigma(x)\left({\mathcal E}_x\left[\prod_{j =1}^N f(x_j)\right] - {\mathcal E}_x\left[\prod_{j =1}^N g(x_j)\right] \right)
\] 
and $G[1](x) \equiv 0$.  We thus have that, for $x\in E$,
\begin{align*}
G^\uparrow[f] &=G^\updownarrow[f,1](x)\\
&=\frac{\varsigma(x)}{p(x)}
 \mathcal{E}_{x}
\Bigg[ 
\sum_{\stackrel{I \subseteq \{1, \dots, N\}}{|I|\geq 1}}
\prod_{i\in I}p(x_i)f(x_i)\prod_{i \in \{1, \dots, N\}\backslash I}w(x_i) 
\Bigg]\\
&\hspace{2cm}- f(x)  \frac{\varsigma(x)}{p(x)}\mathcal{E}_{x}\left[\sum_{\stackrel{I \subseteq \{1, \dots, N\}}{|I| \ge 1}}\prod_{i \in I}p(x_i)\prod_{i \in \{1, \dots, N\}\backslash I}w(x_i)\right]
\\
&=\frac{\varsigma(x)}{p(x)}
 \mathcal{E}_{x}
\Bigg[ 
\prod_{k=1}^N \left(p(x_i)f(x_i)+w(x_k )\right) - \prod_{k=1}^N w(x_k )
\Bigg]\\
&\hspace{2cm}- f(x) \frac{\varsigma(x)}{p(x)} \Bigg[ 
\prod_{k=1}^N \left(p(x_i)+w(x_k )\right) - \prod_{k=1}^N w(x_k )
\Bigg]
\\
&=\frac{1}{p(x)}
\left\{
G[pf+w](x)-G[w](x)
\right\}
- f(x) \frac{1}{p(x)} 
\left\{
G[1](x)-G[w](x)
\right\}
\\
& = \frac{1}{p(x)}\left\{G[pf + w](x) - (1-f)G[w](x)\right\},
\end{align*}
since $p(x)+w(x)=1$, and this is just as required.
\end{proof}

\begin{theorem}[Skeletal decomposition]\label{skeleton} We assume throughout that (M1) and (M2) are in force. Suppose that $\mu = \textstyle{\sum_{i = 1}^n\delta_{x_i} }$, for $n\in\mathbb{N}$ and $x_1,\cdots, x_n\in E$. Then $(X,\mathbb{P}_\mu)$ is equal in law to 
\begin{equation}
 \sum_{i = 1}^n \left(\emph{\texttt{B}}_i X^{i, \updownarrow}_t + (1- \emph{\texttt{B}}_i)X^{i, \downarrow}_t \right), \qquad t\geq 0,
\label{bernoulli}
\end{equation}
where, for each  $i = 1,\dots, n$,  $\emph{\texttt B}_i$ is an  independent Bernoulli random variable with probability of success given by 
\begin{equation}
\label{pdef}p(x_i) :=1-w(x_i)
\end{equation} and the processes $X^{i, \downarrow}$ and $X^{i,\updownarrow}$ are independent copies of $(X,\mathbb{P}^\downarrow_{\delta_{x_i}})$ and $(X, \P^\updownarrow_{\delta_{x_i}}(\cdot | c_\emptyset(0) =\,\uparrow))$, respectively.
\end{theorem}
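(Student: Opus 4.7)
My approach is a short bookkeeping argument built on the two structural results Propositions~\ref{black} and~\ref{new}, which between them do all the real work. First, I would reduce to the single-ancestor case. Under $\mathbb{P}_\mu$ with $\mu = \sum_{i=1}^n \delta_{x_i}$, the branching property gives $X \eqd \sum_{i=1}^n X^i$, where $X^1,\ldots,X^n$ are independent copies of $(X, \mathbb{P}_{\delta_{x_i}})$, and this factorisation lifts to the $\updownarrow$-extension since each label $c_i(t)$ is a tail functional of the subtree rooted at the $i$-th ancestor --- it records whether that subtree dies out in finite time --- so on the $n$ disjoint subtrees the labels are independent. It therefore suffices to prove the statement for $\mu = \delta_x$.

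Second, I would condition on the label of the root. Under $\mathbb{P}^\updownarrow_{\delta_x}$ the label $c_\emptyset(0)$ is Bernoulli with
$$\mathbb{P}^\updownarrow_{\delta_x}(c_\emptyset(0)=\downarrow)=w(x), \qquad \mathbb{P}^\updownarrow_{\delta_x}(c_\emptyset(0)=\uparrow)=p(x),$$
directly from the definition~\eqref{wdef} of $w$. Since the total Radon--Nikodym density in \eqref{updownCOM} integrates to unity, the unlabelled marginal of $\mathbb{P}^\updownarrow_{\delta_x}$ coincides with $\mathbb{P}_{\delta_x}$, and the law of total probability then yields
$$\mathbb{P}_{\delta_x}(X\in\cdot) = w(x)\,\mathbb{P}^\updownarrow_{\delta_x}(X\in\cdot \mid c_\emptyset(0)=\downarrow) + p(x)\,\mathbb{P}^\updownarrow_{\delta_x}(X\in\cdot \mid c_\emptyset(0)=\uparrow).$$
By \eqref{probredtree}, the first conditional law is $\mathbb{P}^\downarrow_{\delta_x}$, which Proposition~\ref{black} identifies with a $(\bP^\downarrow,G^\downarrow)$-MBP; and Proposition~\ref{new} identifies the second conditional law as the law of the dressed $(\bP^\uparrow,G^\uparrow)$-MBP denoted $X^{\updownarrow}$.

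Third comes the assembly. Introducing an independent Bernoulli variable $\texttt{B}$ with success probability $p(x)$, together with independent copies $X^{\updownarrow},X^{\downarrow}$ sampled under these two conditional laws, the mixture identity above is exactly $X \eqd \texttt{B}\,X^{\updownarrow} + (1-\texttt{B})\,X^{\downarrow}$ under $\mathbb{P}_{\delta_x}$. Combining with the ancestor-wise factorisation of the first step delivers \eqref{bernoulli}. The only subtlety worth guarding against lies in that first step: one must verify the compatibility of the $\updownarrow$-extension with the splitting of $\mathcal{F}_\infty$ into the $n$ subtree $\sigma$-algebras, and this is where a careless argument would stumble. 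The needed observation is that $w(x_i) = \mathbb{P}_{\delta_{x_i}}(\zeta<\infty)$ is a functional of the $i$-th subtree alone, which is a built-in feature of the fixed-point relation \eqref{fixedpoint}, after which the mutual independence of the labels across the $n$ subtrees follows immediately from the independence of the subtrees themselves under $\mathbb{P}_\mu$.
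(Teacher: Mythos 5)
Your argument is correct and follows essentially the same route as the paper: both reduce via the branching property to a single ancestor, condition on the root's label under $\mathbb{P}^\updownarrow$ (which is Bernoulli with success probability $p(x)$), and then invoke Propositions~\ref{black} and~\ref{new} to identify the two conditional laws. The paper phrases this through the generating functional $\Pi_t[f,g]$ and a Doob $h$-transform viewpoint while you phrase it as a direct law-of-total-probability mixture, but the substance and the dependence on the two propositions are the same.
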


As alluded to previously, Theorem \ref{skeleton} pertains to a classical decomposition of branching trees in which the process \eqref{bernoulli} describes how the MBP divides into the genealogical lines of descent which are `prolific' (surviving with probability $p$), in the sense that they create eternal subtrees which never leave the domain, and  those which are `unsuccessful' (dying with  probability $w$), in the sense that they generate subtrees in which all genealogies die out.

\begin{remark}\label{iii}\rm It is an easy consequence of Theorem \ref{skeleton} that, 
for $t\geq 0$, the law of $X^\uparrow_t$  conditional on $\mathcal{F}_t =\sigma(X_s, s\leq t)$, is equal to that of a Binomial point process with intensity $p(\cdot)X_t(\cdot)$. The latter, written  ${\rm BinPP}(pX_t)$, is an atomic random measure given by 
\[
{\rm BinPP}(pX_t) = \sum_{i = 1}^{N_t}{\texttt B}_i\delta_{x_i(t)},
\]
where (we recall) that $\textstyle{X_t = \sum_{i=1}^{N_t}\delta_{x_i(t)}}$, and  ${\texttt B}_i$ is a Bernoulli random variable with probability $p(x_i(t))$, $i = 1,\cdots,N_t$.
\end{remark}

{\color{black}
\begin{remark}\label{1or2}\rm
It is also worth noting that the skeleton process $X^\uparrow$, given above, necessarily  has at least one type-$\uparrow$ offspring at each branch point, and indeed might have exactly one type-$\uparrow$ offspring (although possibly with other simultaneous type-$\downarrow$ immigrants). As such, an alternative way of looking at the  type-$\uparrow$ process  would be to  think of the skeleton of prolific individuals as a $(\bP^\Uparrow, G^\Uparrow)$-MBP with \emph{at least two} type-$\uparrow$ offspring at each branch point, with a modified motion $\bP^\Uparrow$ in place of $\bP^\uparrow$ which integrates the event of a single type-$\uparrow$ as an additional discontinuity in the movement. However, note these additional jumps are {special} in the sense as they are also potential points of simultaneous immigration of type-$\downarrow$ particles, unlike other jumps corresponding to $\bP^\uparrow$ where there is no type-$\downarrow$ immigration. 
\end{remark}
}

\subsection{Combining $\updownarrow$-trees and $\downarrow$-trees into the skeletal decomposition}
Finally we are now ready to give the skeletal decomposition of $(X, \mathbb{P})$.

\begin{proof}[Proof of Theorem \ref{skeleton}] As previously, we may think of  $((x_i(t), c_i(t)), i\leq N_t)$, $t\geq0$, as a two-type branching process under $\mathbb{P}^\updownarrow$. 
A similar calculation to  \eqref{updownCOM} gives us that, for  $\nu = \textstyle{\sum_{i = 1}^n \delta_{x_i}}$ with $n\geq 1$ and $x_i\in E$, $i = 1,\dots, n$, 
 \begin{align*}
&\E^\updownarrow_{\nu}\left[\Pi_t[f,g] \right]= \sum_{I\subseteq \{1,\dots, n\}}\prod_{i\in  I} {p(x_i)}{u^\updownarrow_t[f, g](x_i)}
\prod_{i\in \{1,\dots, n\}\backslash I}{w(x_i)}{u^\downarrow_t[wg](x_i)}.
\end{align*}

What this shows, together with the conditional version \eqref{updownCOM}, is that the change of measure \eqref{updownCOM} (which is of course unity) is equivalent to a Doob $h$-transform on a two-type branching particle system (i.e. types $\{\uparrow,\downarrow\}$) where we consider the system after disregarding the marks. The effect of this  Doob $h$-transform on type-$\downarrow$ particles is that they generate $(\bP^\downarrow, G^\downarrow)$-MBPs, where as  type-$\uparrow$ particles generate a dressed $(\bP^\uparrow,G^\uparrow)$-MBP as described in Proposition \ref{new}. 
\end{proof}

\subsection{Remarks on the skeletal decomposition for the NBP} {\color{black}The case of the skeletal decomposition for the NBP adds an additional layer of intricacy to the general picture given above. In this case, we have $E = D\times V$ with cemetery state $\dagger$ that is entered when there is neutron capture (a neutron disappears in $D\times V$ without undergoing fission) or neutrons go to the physical boundary points $\{(r,\upsilon): r\in \partial D \text{ and }\upsilon
\cdot{\bf n}_r>0\}$. It turns out that for the NBP, it is more convenient to view Theorem \ref{skeleton} in the spirit of Remark \ref{1or2}, i.e. we view the process $X^\uparrow$ as a branching process that has at least two offspring at every branching event and whose movement corresponds to advection plus an extra discontinuity, which accounts for a branching event with one offspring. 
\smallskip

To make this statement more precise, we first enforce the conditions of Theorem \ref{Kesten} in order to ensure (M1) and (M2) are satsfied. 
Indeed, on account of the inclusion $\{\zeta<\infty\}\subseteq \{W_\infty = 0\}$, we see that $w(x)\leq \P_{\delta_{x}}(W_\infty = 0)$, $r\in D, \upsilon\in V$. Recalling that $W$ converges both almost surely as well as in $L^1(\mathbb{P})$ to its limit, we have that $\P_{\delta_{x}}(W_\infty = 0)<1$ for $r\in D, \upsilon\in V$. This, combined with the fact that every particle may leave the bounded domain $D$ directly without scattering or undergoing fission with positive probability, gives us that  
\begin{equation}
{\rm e}^{-\int_0^{\kappa^D_{r, \upsilon}} \sigma(r+\upsilon s, \upsilon)\d s}<w(r, \upsilon)<1\text{ for all }r\in D, \upsilon\in V.
\label{<1}
\end{equation}
Note that the lower bound is uniformly bounded away from 0 thanks to the maximal diameter of $D$, the minimal velocity $\upsilon_{\texttt{min}}$ (which, together uniformly upper bound $\kappa^D_{r, \upsilon}$) and the uniformly upper bounded rates of fission and scattering. 
The upper inequality becomes an equality for $r\in \partial D$ and $\upsilon
\cdot{\bf n}_r>0$.
\smallskip

Now, viewing the NBP $X$ as a process with movement $\Q$ and branching generator $G$, heuristically speaking, we can understand a little better the the motions of $X^\uparrow$ and $X^\downarrow$ through the action of their generators. By considering only the leading order terms in small time (the process $(X_t, t\geq 0)$ is but a Markov chain), the action of the generator can be see as the result of the limit 
\begin{equation}
\bL f =\lim_{t\downarrow0}\frac{1}{t}\left(\Q_t[f] -f\right),
\label{genlim}
\end{equation}
for suitably smooth  $f$ (e.g. continuously differentiable within $L_\infty^{+}(D\times V)$).
It is easy to show, and indeed known (cf. e.g. \cite{MultiNTE, DL6}), that the action of the generator corresponding to $\Q$ is given by
\begin{equation}
\bL f(r,\upsilon) = \upsilon\cdot\nabla f(r,\upsilon) +\int_{V}\left( f(r, \upsilon') - f(r,\upsilon) \right)\sigma_{\texttt{s}} (r,\upsilon)\pi_{\texttt{s}}(r,\upsilon, \upsilon')\d\upsilon',
\label{L}
\end{equation}

for $f\in L^+_\infty(D\times V)$ such that $\nabla f$ is well defined (here $\nabla$ is assumed to act on the spatial variable $r$). 
{\color{black}
We emphasise again that, in view of Remark \ref{1or2}, this corresponds to motion plus a branching event with one offspring (or scattering). 
}
\smallskip

\smallskip

The change of measure \eqref{COMdown} induces a generator action given by 
\begin{align}
\bL^\downarrow f(r, \upsilon)&= \frac{1}{w(r, \upsilon)}\bL (w f)(r, \upsilon) + f (r, \upsilon)\frac{G[w]}{w}(r, \upsilon)\notag\\
& = \upsilon\cdot\nabla f(r,\upsilon) +\int_{V}\left( f(r, \upsilon') - f(r,\upsilon) \right)\sigma_{\texttt{s}} (r,\upsilon)\frac{w(r,\upsilon')}{w(r,\upsilon)}\pi_{\texttt{s}}(r,\upsilon, \upsilon')\d\upsilon'\notag\\
&\hspace{2cm}+ f(r, \upsilon)\left(\frac{\bL w}{w} + \frac{G[w]}{w}\right)(r, \upsilon) \notag\\
&= \upsilon\cdot\nabla f(r,\upsilon) +\int_{V}\left( f(r, \upsilon') - f(r,\upsilon) \right)\sigma_{\texttt{s}} (r,\upsilon)\frac{w(r,\upsilon')}{w(r,\upsilon)}\pi_{\texttt{s}}(r,\upsilon, \upsilon')\d\upsilon',
\label{Ldown}
\end{align}
where the fact that the right-hand side of \eqref{COMdown} is a martingale will lead to $\bL w + G[w] = 0$.

In other words, our heuristic reasoning above shows that the motion on the $\downarrow$-marked tree is tantamount to a $w$-tilting of the scatting kernel. This tilting favours scattering in a direction where 
extinction becomes more likely, and as such, $\bL^\downarrow$ encourages $\downarrow$-marked trees to become extinct `quickly'.
\smallskip

%
%
%
%
%
Almost identical  reasoning shows that the change of measure \eqref{pmg} has generator with action
\begin{align}
\bL^\uparrow f(r, \upsilon) &= \frac{1}{p(r, \upsilon)}\bL(p f)(r, \upsilon) - f(r, \upsilon)\frac{G[w]}{p}(r, \upsilon)\notag\\
&=\upsilon\cdot\nabla f(r,\upsilon) +\int_{V}\left( f(r, \upsilon') - f(r,\upsilon) \right)\sigma_{\texttt{s}} (r,\upsilon)\frac{p(r,\upsilon')}{p(r,\upsilon)}\pi_{\texttt{s}}(r,\upsilon, \upsilon')\d\upsilon',
\label{Lup}
\end{align}
for suitably smooth $f$, where we have again used $\bL w + G[w] = 0$ and left the calculations that the second equality from the first as an exercise for the reader. One sees again a $p$-tilting of the scattering kernel, and hence $L^\uparrow$ rewards scattering in directions that `enable survival'. Note, moreover for regions of $D\times V$ for which $p(r,\upsilon)$ can be come arbitrarily small (corresponding to a small probability of survival), the scattering rate also becomes very large, and hence $L^\uparrow$ `urgently' scatters particles away from such regions.

\subsection{Remarks on BBM}
On account of the fact that we have stated Theorem \ref{skeleton} for a relatively general MBP with non-local branching, it is worth pointing to the known example of a BBM in a strip that has previously been worked out in detail in \cite{HHK}. 
This model has the features that $\bP$ is that of a Brownian motion with drift $\mu$ killed on existing an interval $[0,K]$, so that $\bL = (1/2)\d^2/\d x^2 + \mu \d /\d x$, the branching rate $\varsigma$ is constant (not spatially dependent) and the offspring distribution is concentrated at the point of death of each particle. As such, the generator $G$ in \eqref{similar111} takes the simpler form 
\begin{equation}
G[\theta] = \varsigma\mathcal{E}\left(\theta^N - \theta\right) 
\label{BBMG}
\end{equation}
where 
it suffices to take $\theta$ as a number in $(0,1)$, rather than a function, as there is no spatial dependency. The extinction probability now solves the differential equation 
\[
\frac{1}{2}\frac{\d w}{\d x^2} +\mu\frac{\d w}{\d x}+ G[w] = 0 \text{ on }(0,K)\text{ with } w(0)= w(K) = 1.
\]
In order for survival to occur with positive probability, it is required that the leading eigenvalue of the mean semigroup associated to the branching process, which is  $\lambda_*  := (m-1)\varsigma - \mu^2/2 -\pi^2/2K$, must satisfy $\lambda_*>0$, where $\textstyle{m = \sum_{k= 0}^\infty kp_k}$ is the mean number of offspring. Note, the mean semigroup is the analogue of \eqref{semigroup} and the leading eigenvalue plays precisely the role of $\lambda_*$ in Theorem \ref{PF} for the NTE. 
\smallskip

For the $\downarrow$ process, writing $G^\downarrow$ in \eqref{redbmech} in a similar format to \eqref{BBMG}, it is straightforward to verify that it agrees with the branching generator stipulated in analysis of the red tree given in \cite{HHK}.
\smallskip

However, for the $\uparrow$ process, this model also takes the point of view described in Remark \ref{1or2}. Indeed, it is straightforward to show that the branching generator for the blue tree in \cite{HHK} agrees with $G^\Uparrow$ given in Remark \ref{1or2} and the `discontinuity' associated with a birth of one offspring is appended to the motion. However, since this model only has local branching and the movement is a Brownian motion, this does not actually change the motion. On the other hand, this choice does affect the overall process $X^{\updownarrow}$ and leads to two types of immigration of red trees onto the blue tree: immigration at branch points and immigration along the trajectory, with the latter immigration occurring at the points corresponding to a `birth of one offspring'.
\smallskip


When, additionally, the interval $[0,K]$ is replaced by $\mathbb{R}$, the extinction probability $w$ is no longer spatially dependent and is a simple solution of $G[w] = 0$. Assuming that $w\in(0,1)$, it is easy to see that  $\bL^\uparrow$ and $\bL^\downarrow$ are both equal to $\bL$ and the skeletal decomposition is nothing more than the original skeletal decomposition for Galton--Watson processes (albeit in continuous time) given in the book of  Harris \cite{H}.}

\section{SLLN on the skeleton}

Our aim is to use the skeletal decomposition of the neutron branching process to prove Theorem \ref{SLLN} by first stating and proving the analogous result for $X^\uparrow$. {\color{black}Hence, in what follows, we will assume (H1), (H2)$^*$, (H3)$^*$ and (H4) hold.} Before continuing to the proof, let us consider a useful identity. For a suitable $g\in L_\infty(D\times V)$  and $t \ge 0$, we have from Theorem \ref{skeleton} (cf. Remark \ref{iii}) that 
\begin{align}
\mathbb{E}_{\delta_{(r, \upsilon)}}^\uparrow\left[ \langle g, X_t^\uparrow\rangle \right] &= \mathbb{E}_{\delta_{(r, \upsilon)}}^\updownarrow\left[\langle g, pX_t\rangle \big|  c_\emptyset(0) = \uparrow \right] = \frac{1}{p(r, \upsilon)}\mathbb{E}_{\delta_{(r, \upsilon)}}\left[\langle gp, X_t \rangle\right] \label{identity1}.
\end{align}

We can use this identity to show that ${\lambda_*}$ is also an eigenvalue for the linear semigroup of $X^\uparrow$, as well as to compute the associated  left and right eigenfunctions (in a similar sense to \eqref{leftandright}). Our first claim is that the right eigenfunction is given by $\varphi/p$. Indeed, for $(r, \upsilon) \in D \times V$, due to the above computation,
\begin{equation}
\mathbb{E}_{\delta_{(r, \upsilon)}}^\uparrow[\langle {\varphi}/{p}, X_t^\uparrow \rangle] = \frac{\mathbb{E}_{\delta_{(r, \upsilon)}}[\langle \varphi, X_t\rangle]}{p(r, \upsilon)} = {\rm e}^{\lambda_* t}\frac{\varphi(r, \upsilon)}{p(r, \upsilon)}. 
\label{bboneright}
\end{equation}
For the left eigenfunction, again using~\eqref{identity1}, we have
\begin{equation}
\langle \tilde\varphi p, \mathbb{E}_{\delta_\cdot}^\uparrow[\langle g, X_t^\uparrow\rangle]\rangle = \langle  \tilde\varphi p, {\mathbb{E}_{\delta_\cdot}[\langle gp, X_t\rangle]}/{p(\cdot)} \rangle = \langle  \tilde\varphi, \mathbb{E}_{\delta_\cdot}[\langle gp, X_t\rangle]\rangle = {\rm e}^{\lambda_* t}\langle \tilde{\varphi}p, g \rangle. \label{bboneleft}
\end{equation}
Hence $\tilde\varphi p$ is the corresponding left eigenfunction with eigenvalue ${\rm e}^{\lambda_*t}$. 

\smallskip

It now follows by similar arguments to those given in~\cite{SNTE} that
\begin{equation}
W_t^\uparrow \coloneqq {\rm e}^{-\lambda_* t}\frac{\langle \varphi/p , X_t^\uparrow\rangle}{\langle \varphi/ p, \mu\rangle}, \quad t \ge 0,
\label{bbonemg}
\end{equation}
is a positive martingale under $\mathbb{P}_{\mu}^\uparrow$ for $\mu\in \mathcal{M}(D\times V)$, and hence has a finite limit, which we denote $W_\infty^\uparrow$.

\smallskip
{\color{black}

A second useful fact that we will use is the following result.
\begin{lem}\label{Simonlemma}
There exists a constant $C\in(0,\infty)$ such that  $\textstyle{\sup_{r\in D,\upsilon\in V}\varphi(r,\upsilon)/p(r,\upsilon)}<C$.
\end{lem}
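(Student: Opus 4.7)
The plan is to combine the fact that $\varphi$ is uniformly bounded above with a lower bound of the form $p(r,\upsilon) \geq c\,\varphi(r,\upsilon)$ for some constant $c>0$ and all $(r,\upsilon)\in D\times V$. Recall from the inclusion $\{\zeta<\infty\}\subseteq \{W_\infty = 0\}$ mentioned in the discussion preceding \eqref{<1} that
\[
p(r,\upsilon)=\P_{\delta_{(r,\upsilon)}}(\zeta = \infty)\geq \P_{\delta_{(r,\upsilon)}}(W_\infty>0).
\]
Since the assumptions of Theorem \ref{Kesten} are in force, $W$ converges in $L^2(\P)$, so $\E_{\delta_{(r,\upsilon)}}[W_\infty]=1$. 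A one-line application of the Cauchy--Schwarz inequality then gives
\[
1=\E_{\delta_{(r,\upsilon)}}[W_\infty\mathbf{1}_{\{W_\infty>0\}}]^2\leq \E_{\delta_{(r,\upsilon)}}[W_\infty^2]\,\P_{\delta_{(r,\upsilon)}}(W_\infty>0),
\]
so it suffices to produce an upper bound of the shape
\begin{equation}
\sup_{t\ge 0}\E_{\delta_{(r,\upsilon)}}[W_t^2]\leq \frac{C'}{\varphi(r,\upsilon)},\qquad (r,\upsilon)\in D\times V, \label{sketchbound}
\end{equation}
for a finite constant $C'$ not depending on $(r,\upsilon)$, since by $L^2$-convergence this inequality passes to $W_\infty$ and then yields $p(r,\upsilon)\geq \varphi(r,\upsilon)/C'$, giving the result with $C = C'$.

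The main substantive step is establishing \eqref{sketchbound}. First I would write the standard second moment identity for the branching process
\[
\E_{\delta_{(r,\upsilon)}}[\langle \varphi, X_t\rangle^2]=\psi_t[\varphi^2](r,\upsilon) + \int_0^t \psi_s\big[\,\V[\psi_{t-s}[\varphi]]\,\big](r,\upsilon)\,\d s,
\]
where $\V$ denotes the branching variance kernel built from $\sigma_{\texttt f}$ and the measures $\mathcal P_{(r,\upsilon)}$; the existence and explicit form of this decomposition follows routinely from the Markov branching structure and the bounded offspring assumption (H4), and has already been used in \cite{SNTE}. Using $\psi_{t-s}[\varphi]= {\rm e}^{\lambda_*(t-s)}\varphi$ together with the boundedness of $\V[\varphi]$ (a direct consequence of (H1), (H4) and the boundedness of $\varphi$), the right-hand side is bounded by
\[
\psi_t[\varphi^2](r,\upsilon) + \|\V[\varphi]\|_\infty \int_0^t {\rm e}^{2\lambda_*(t-s)}\,\psi_s[\mathbf{1}](r,\upsilon)\,\d s.
\]

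Next I would apply the Perron--Frobenius asymptotic \eqref{phiasymp} of Theorem \ref{PF} to the two semigroup terms: for every $h\in L^+_\infty(D\times V)$ we have $\psi_s[h](r,\upsilon)\leq C_h\,{\rm e}^{\lambda_* s}\varphi(r,\upsilon)$ uniformly in $(r,\upsilon)$ and $s\geq 0$, with $C_h$ depending only on $\|h\|_\infty$ and $\langle\tilde\varphi,h\rangle$. Plugging this in (with $h=\varphi^2$ and $h=\mathbf{1}$ respectively) and performing the resulting elementary integral in $s$ (where $\lambda_*>0$ is crucial) gives
\[
\E_{\delta_{(r,\upsilon)}}[\langle \varphi, X_t\rangle^2]\leq C'\,\varphi(r,\upsilon)\,{\rm e}^{2\lambda_* t},
\]
and dividing by $\varphi(r,\upsilon)^2\,{\rm e}^{2\lambda_* t}$ yields \eqref{sketchbound}, as required.

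The hard part will be carefully extracting the uniform constants in the Perron--Frobenius step so that the bound is truly uniform in $(r,\upsilon)\in D\times V$, especially where $\varphi(r,\upsilon)$ is small (for instance near the outgoing part of $\partial D$). However, this is really just a bookkeeping exercise once one has \eqref{phiasymp}, because \eqref{phiasymp} is an $L^\infty$ statement in the variable $\varphi^{-1}\psi_t[h]$, so the constants $C_h$ above are genuinely uniform. Everything else is then automatic from $L^2$-convergence of $W_t$ and the Cauchy--Schwarz lower bound on $\P_{\delta_{(r,\upsilon)}}(W_\infty>0)$.
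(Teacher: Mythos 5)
Your proof is correct, and it takes a genuinely different route from the paper's. The paper proceeds via the spine change of measure $\P^\varphi$ defined through the martingale $W$: writing $p(r,\upsilon)=\lim_{t\to\infty}\P_{\delta_{(r,\upsilon)}}(t<\zeta)=\lim_{t\to\infty}\E^\varphi_{\delta_{(r,\upsilon)}}[W_t^{-1}\mathbf{1}_{(t<\zeta)}]$, it then applies Fatou, Jensen, and immortality of the process under $\P^\varphi$ (cited from Lemma 6.1 of \cite{SNTE}) to obtain $p\geq 1/\E^\varphi[W_\infty]$, and finally bounds $\E^\varphi_{\delta_{(r,\upsilon)}}[W_\infty]=\lim_t\E_{\delta_{(r,\upsilon)}}[W_t^2]$ by $c\int_0^\infty e^{-2\lambda_* t}\psi_t[1](r,\upsilon)/\varphi(r,\upsilon)\,\d t$, quoted directly from equations (10.1) and (10.3) of \cite{SNTE}, before appealing to Theorem \ref{PF}. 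You instead reach the key inequality $p(r,\upsilon)\geq 1/\E_{\delta_{(r,\upsilon)}}[W_\infty^2]$ by the more elementary route of the inclusion $\{\zeta<\infty\}\subseteq\{W_\infty=0\}$ plus Cauchy--Schwarz, thereby avoiding any appeal to the $\P^\varphi$-immortality result, and you re-derive the second-moment bound $\sup_t\E[W_t^2]\leq C'/\varphi$ directly from the branching variance decomposition rather than citing the companion paper. Both approaches ultimately control the same quantity (since $\E^\varphi[W_\infty]=\lim_t\E[W_t^2]$), so the second-moment content is equivalent; what your argument buys is a shorter and more self-contained first step (Cauchy--Schwarz in place of the change-of-measure machinery plus Fatou plus Jensen plus immortality), at the modest cost of spelling out the second-moment identity that the paper simply imports. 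The one place I'd suggest being a bit more explicit is the branching variance kernel $\V$: since the branching here is non-local, the correct form is the one implicit in (10.3) of \cite{SNTE}, and one should verify that $\|\V[\varphi]\|_\infty<\infty$ really does follow from (H1), (H4) and $\varphi\in L^+_\infty$ as you assert; but this is exactly what the paper relies on too, so there is no genuine gap.
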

\begin{proof}
Let us introduce the family of measures $\P^\varphi: = (\P^\varphi_\mu, \mu\in\mathcal{M}(D\times V))$, where
\begin{equation}
\left.\frac{\d \mathbb{P}^\varphi_{\mu}}{\d\mathbb{P}_{\mu}}\right|_{\mathcal{F}_t}  =
 W_t, \qquad t\geq 0,
\label{mgCOM}
\end{equation}We start by noting that, for all $r\in D, \upsilon\in V$,
$p(r,\upsilon) = 1-   \P_{\delta_{(r,\upsilon)}}(\zeta<\infty)=  \mathbb{P}_{\delta_{(r,\upsilon)}}(X \text{ survives})$, where $\zeta$ is the lifetime of $X$ defined in \eqref{zeta}. 
Taking account of \eqref{mgCOM}, we can thus write, with the help of Fatou's Lemma and Jensen's inequality,
\begin{align*}
p(r,\upsilon)& = \lim_{t\to\infty} \mathbb{P}_{\delta_{(r,\upsilon)}}(t<\zeta)\\
&=   \lim_{t\to\infty}\mathbb{E}^\varphi_{\delta_{(r,\upsilon)}}\left[\frac{1}{W_t}\mathbf{1}_{(t<\zeta)}\right]\\
&\geq \mathbb{E}^\varphi_{\delta_{(r,\upsilon)}}\left[{1}/{W_\infty}\right]\\
&\geq {1}/{\mathbb{E}^\varphi_{\delta_{(r,\upsilon)}}\left[W_\infty\right]},
\end{align*}
where we note that the indicator is dropped in the first inequality as, from Lemma 6.1 in \cite{SNTE}, the process $(X,\P^\varphi)$ is immortal.

\smallskip

From equations (10.1) and (10.3) in \cite{SNTE}, it has already been shown that 
\begin{equation}
\mathbb{E}^\varphi_{\delta_{(r,\upsilon)}}\left[W_\infty\right] = \lim_{t\to\infty} 
\mathbb{E}_{\delta_{(r,\upsilon)}}\left[W_t^2\right]\leq  c \int_0^\infty 
{\rm e}^{-2\lambda_* t } \frac{\psi_t[1](r, \upsilon)}{\varphi (r,\upsilon)} \d t,
\label{usebelow}
\end{equation}
for some constant $c\in(0,\infty)$. Taking account of  Theorem \ref{PF}, which tells us that 
\[
\lim_{t\to\infty} {\rm e}^{-\lambda_*t}\psi_t[1](r, \upsilon) = \langle1,\tilde\varphi\rangle\varphi(r,\upsilon)\leq \norm{\varphi}_\infty\langle1,\tilde\varphi\rangle,\]
 we deduce that there exists a constant $C\in(0,\infty)$, which does not depend on $(r,\upsilon)\in D\times V$, such that 
\[
p(r,\upsilon)\geq \frac{\varphi(r,\upsilon)}{C}.
\]
The result now follows.
\end{proof}
}
We are now in a position to state and prove a strong law for the skeleton $X^\uparrow$. 
\begin{theorem}\label{SLLNbbone} 
For all non-negative and directionally continuous $g$ (in the sense that $\textstyle{\lim_{s\downarrow0}g(r+\upsilon s, \upsilon) = g(r,\upsilon)}$ for all $r\in D$, $\upsilon\in V$) such that, for some constant $c>0$,  $g\leq  c\varphi / p$, 
\begin{equation}
\lim_{t\to\infty} {\rm e}^{-\lambda_* t}\langle g, X_t^\uparrow\rangle = \langle g,\tilde{\varphi}p\rangle\langle \varphi/ p, \mu\rangle W^\uparrow_\infty.
\label{onlaticefirst}
\end{equation}
$\mathbb{P}_{\mu}^\uparrow$-almost surely for $\mu\in \mathcal{M}(D\times V)$
\end{theorem}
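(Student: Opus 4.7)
The plan is to lift the Perron--Frobenius asymptotics of Theorem \ref{PF} to the mean semigroup of $X^\uparrow$ via \eqref{identity1}, establish $L^2$-control on $W^\uparrow$ along the lines of Theorem \ref{Kesten}, use these to extract $\mathbb{P}^\uparrow_\mu$-a.s.\ convergence along a discrete lattice of times by a Borel--Cantelli argument, and then interpolate to continuous time using the directional continuity of $g$. Since $g \leq c\varphi/p$ we have $gp \leq c\varphi$, so identity \eqref{identity1} combined with \eqref{phiasymp} and Lemma \ref{Simonlemma} gives, uniformly in $(r,\upsilon)$,
$$e^{-\lambda_* t}\mathbb{E}^\uparrow_{\delta_{(r,\upsilon)}}[\langle g, X_t^\uparrow\rangle] = e^{-\lambda_* t}\frac{\psi_t[gp](r,\upsilon)}{p(r,\upsilon)} = \langle gp, \tilde\varphi\rangle \frac{\varphi(r,\upsilon)}{p(r,\upsilon)} + O(e^{-\varepsilon t}).$$
An adaptation of the second-moment argument underlying Theorem \ref{Kesten} to the skeleton dynamics $(\bP^\uparrow, G^\uparrow)$ yields that $W^\uparrow$ is $L^2$-bounded under $\mathbb{P}^\uparrow_\mu$, and combining this with $g\leq c\varphi/p$ and Lemma \ref{Simonlemma} produces a constant $C$ with $\mathbb{E}^\uparrow_{\delta_x}[\langle g, X_s^\uparrow\rangle^2] \leq C e^{2\lambda_* s}$ uniformly in $x\in D\times V$.

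Setting $N_t := e^{-\lambda_* t}\langle g, X_t^\uparrow\rangle$ and fixing $s>0$, the Markov branching property for $X^\uparrow$ splits the conditional variance over independent subtrees, which together with the second-moment bound above and $\mathbb{E}^\uparrow_\mu[\langle 1, X_{t-s}^\uparrow\rangle] \leq C'' e^{\lambda_*(t-s)}$ gives
$$\mathbb{E}^\uparrow_\mu\!\left[(N_t - \mathbb{E}^\uparrow_\mu[N_t\mid \mathcal{F}_{t-s}])^2\right] \leq C' e^{\lambda_* s}\, e^{-\lambda_* t}.$$
Choosing $t_n = n\delta$, the right-hand side is summable in $n$, so Borel--Cantelli implies $N_{t_n} - \mathbb{E}^\uparrow_\mu[N_{t_n}\mid\mathcal{F}_{t_n - s}] \to 0$ $\mathbb{P}^\uparrow_\mu$-almost surely. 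On the other hand the Markov branching property applied to the mean asymptotics above gives
$$\mathbb{E}^\uparrow_\mu[N_{t_n}\mid \mathcal{F}_{t_n - s}] = \langle gp, \tilde\varphi\rangle\,\langle \varphi/p,\mu\rangle\, W^\uparrow_{t_n-s} + O(e^{-\varepsilon s})\,W^\uparrow_{t_n-s}\,\langle\varphi/p,\mu\rangle,$$
so sending $n \to \infty$ then $s \to \infty$ yields $N_{t_n}\to \langle gp,\tilde\varphi\rangle\langle\varphi/p,\mu\rangle W_\infty^\uparrow$ $\mathbb{P}^\uparrow_\mu$-almost surely along the lattice $\{t_n\}$.

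The main obstacle is the interpolation step. For $t \in [t_n, t_{n+1}]$ one introduces the directional envelopes
$$g^\pm_\delta(r,\upsilon) := \sup_{0\leq u\leq\delta}\bigl(\pm g(r+\upsilon u,\upsilon)\bigr),$$
which are dominated by a constant multiple of $\varphi/p$ and, by directional continuity, converge to $\pm g$ pointwise as $\delta\downarrow 0$. A particle alive at time $t_n$ which does not branch on $[t_n, t_{n+1}]$ evolves deterministically by advection, so its contribution to $\langle g, X_t^\uparrow\rangle$ at any $t\in [t_n,t_{n+1}]$ is sandwiched between $-g^-_\delta$ and $g^+_\delta$ evaluated at its time-$t_n$ position. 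The residual contribution from particles that do branch in $[t_n,t_{n+1}]$ is controlled by a Poissonian bound on the number of branch events in this interval (the branching rate for $X^\uparrow$ being uniformly bounded, which follows from \eqref{pmg}, (H1), (H4) and Lemma \ref{Simonlemma}) together with the second-moment estimate from Step 2. Applying the lattice result to $g^\pm_\delta$ and then letting $\delta\downarrow 0$ via dominated convergence completes the proof. The delicate part is the non-local nature of the branching generator $G^\uparrow$: newly scattered or immigrated particles in $[t_n,t_{n+1}]$ can land at velocities far from the parent's, so the directional envelopes only capture the drift between branch events, and one must argue separately (via a monotone-class or refinement argument) that the sandwich functions $g^\pm_\delta$ still fall within the class to which the lattice convergence applies.
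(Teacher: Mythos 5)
Your lattice-time argument is essentially the same in spirit as the paper's Lemma~\ref{lemma1} and the subsequent lattice step: a second-moment Borel--Cantelli argument applied to $N_t - \E^\uparrow[N_t\mid\mathcal{F}_{t-s}^\uparrow]$, using the mean asymptotics coming from \eqref{identity1} and Theorem \ref{PF}, and then sending $n\to\infty$ followed by $s\to\infty$. The paper parametrises the time split with a general sequence $(m_n)$ rather than a fixed $s$, but this is cosmetic. The $L^2$-control on $W^\uparrow$ is also obtained in the paper from the original process via \eqref{identity1}, \eqref{usebelow} and Lemma~\ref{Simonlemma} (yielding Corollary~\ref{freebie}), not by re-running the proof of Theorem~\ref{Kesten} on the skeleton dynamics, but the conclusion is the same.

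The interpolation step is where your proposal genuinely deviates, and it has gaps. First, a particle of the skeleton that does not branch on $[t_n,t_{n+1}]$ does \emph{not} evolve deterministically by advection: the skeleton's single-particle motion $\bP^\uparrow$ in \eqref{pmg} is a neutron random walk with the $p$-tilted scattering kernel (cf.\ \eqref{Lup}), so the velocity component is itself a jump process even between branch events. Your directional envelopes $g^\pm_\delta$ sandwich the advective drift but say nothing about particles that scatter, so the ``non-branching'' contribution is not controlled the way you claim. Second, even for the advective piece, the envelopes $g^\pm_\delta(r,\upsilon)=\sup_{0\le u\le\delta}(\pm g(r+\upsilon u,\upsilon))$ are not obviously in the admissible class: you need them to be measurable, directionally continuous and dominated by $c\varphi/p$ in order to apply the lattice result, and you flag this as a ``monotone-class or refinement argument'' without carrying it out --- in a blind proof this is a gap, not a resolution. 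Third, the ``Poissonian bound'' on the residual from branching/scattering events is stated but never made quantitative, and because the branching generator $G^\uparrow$ is non-local, the offspring and immigrants can land at velocities arbitrarily far from the parent's; dominating their total $\varphi/p$-weight over a $\delta$-window uniformly in $n$ in a summable way requires a genuine second-moment estimate that you do not supply.

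The paper's route sidesteps all of this by exploiting a structural property you do not use: the skeleton is \emph{prolific}, so every time-$n\delta$ particle has at least one descendant at every $t\in[n\delta,(n+1)\delta)$. This permits the clean \emph{one-sided} lower bound \eqref{estimate}: one attaches to each time-$n\delta$ particle the indicator $\Xi^{\delta,\varepsilon}$ that its entire descendant subtree remains in the set $\Omega_\varepsilon$ (where $g\varphi/p$ is within a factor $(1+\varepsilon)^{-1}$ of its initial value), and shows $\E^\uparrow[\Xi^{\delta,\varepsilon}]\to 1$ as $\delta\downarrow 0$ by a Fatou argument using the directional continuity of $g$, $\varphi$ and $p$. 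This handles scattering and non-local branching simultaneously because the indicator constrains the whole subtree, not just the advective drift. The matching upper bound is then obtained for free by applying the lower bound to $\varphi/p - g$, rather than by bounding a residual term. If you want to repair your proposal you should replace the envelope/sandwich step with this prolific-indicator argument (or an equivalent); as written, the continuous-time interpolation does not close.
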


We prove this theorem by breaking it up into several parts. Starting with the following lemma, we first prove that Theorem \ref{SLLNbbone} holds along lattice times. Our proofs are principally by techniques that have been used a number of times in the literature, developed by \cite{AH, EHK, CRY, CRSZ} amongst others. Just before we state the next lemma, it will be convenient  to quickly introduce the notation  $\mathcal{F}^\uparrow_t  =\sigma(X^\uparrow_s : s\leq t)$, $t\geq 0$.

\begin{lem}\label{lemma1}
Fix $\delta > 0$. For non-negative bounded functions $g\in L^+_\infty(D\times V)$, define
\begin{equation}
U_t = {\rm e}^{-\lambda_*t}\langle {g\varphi}/{p}, X_t^\uparrow \rangle, \quad t \ge 0.
\label{utg}
\end{equation}
Then, for any non-decreasing sequence $(m_n)_{n \ge 0}$ with $m_0>0$ and $(r, \upsilon) \in D\times V$,
\begin{equation}
\lim_{n \to \infty}|U_{(m_n + n)\delta} - \mathbb{E}^\uparrow[U_{(m_n + n)\delta} | \mathcal{F}_{n\delta}^\uparrow]| = 0, \quad \mathbb{P}^\uparrow_{\delta_{(r, \upsilon)}}\text{-a.s.}
\label{limit1}
\end{equation} 
\end{lem}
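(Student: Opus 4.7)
The plan is to show $\sum_{n \ge 0} \mathbb{E}_{\delta_{(r,\upsilon)}}^\uparrow\!\left[(U_{(m_n + n)\delta} - \mathbb{E}^\uparrow[U_{(m_n + n)\delta} | \mathcal{F}_{n\delta}^\uparrow])^2\right] < \infty$ and then conclude by Chebyshev and Borel--Cantelli. Let me write $D_n := U_{(m_n+n)\delta} - \mathbb{E}^\uparrow[U_{(m_n+n)\delta}\mid \mathcal{F}_{n\delta}^\uparrow]$. The starting point is the branching property of $X^\uparrow$ at time $n\delta$: conditional on $\mathcal{F}_{n\delta}^\uparrow$, and writing $(x_i^\uparrow(n\delta))_{i=1}^{N_{n\delta}^\uparrow}$ for the positions of particles in $X^\uparrow$ at time $n\delta$, the process $X^\uparrow_{(m_n+n)\delta}$ is distributed as a sum of independent copies $\sum_i X_{m_n\delta}^{(i),\uparrow}$, each started from $\delta_{x_i^\uparrow(n\delta)}$ under $\mathbb{P}^\uparrow$. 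Consequently,
\[
D_n = \mathrm{e}^{-\lambda_*(m_n+n)\delta}\sum_{i=1}^{N_{n\delta}^\uparrow} Y_i,
\]
where $Y_i := \langle g\varphi/p, X^{(i),\uparrow}_{m_n\delta}\rangle - \mathbb{E}^\uparrow_{\delta_{x_i^\uparrow(n\delta)}}[\langle g\varphi/p, X^\uparrow_{m_n\delta}\rangle]$ are conditionally independent and mean zero.

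From conditional independence we compute
\[
\mathbb{E}^\uparrow[D_n^2\mid \mathcal{F}_{n\delta}^\uparrow] = \mathrm{e}^{-2\lambda_*(m_n+n)\delta}\sum_{i=1}^{N_{n\delta}^\uparrow}\mathrm{Var}^\uparrow_{\delta_{x_i^\uparrow(n\delta)}}\!\left(\langle g\varphi/p, X^\uparrow_{m_n\delta}\rangle\right).
\]
Since $g$ is bounded, $\langle g\varphi/p, X^\uparrow_t\rangle \le \|g\|_\infty \langle \varphi/p, X^\uparrow_t\rangle$, and bounding variance by second moment we get
\[
\mathrm{Var}^\uparrow_{\delta_x}\!\left(\langle g\varphi/p, X^\uparrow_t\rangle\right) \le \|g\|_\infty^2\, (\varphi/p)(x)^2\, \mathrm{e}^{2\lambda_* t}\, \mathbb{E}^\uparrow_{\delta_x}[(W_t^\uparrow)^2].
\]
Here I appeal to the $L^2$-boundedness of the martingale $(W_t^\uparrow)_{t\geq 0}$ uniformly in the starting point $x$, i.e.\ $\sup_{x\in D\times V, t\geq 0}\mathbb{E}^\uparrow_{\delta_x}[(W_t^\uparrow)^2] < \infty$. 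This is the analogue for $W^\uparrow$ of the $L^2$ computation carried out for $W$ in Section~10 of~\cite{SNTE}; the argument transfers verbatim once one notes that $X^\uparrow$ has offspring number bounded by $n_{\texttt{max}}$ (inherited from (H4)) and that its mean semigroup enjoys the Perron--Frobenius structure~\eqref{bboneright}--\eqref{bboneleft}.

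Combining, and using Lemma~\ref{Simonlemma} to absorb one factor of $(\varphi/p)(x) \le C$,
\[
\mathbb{E}^\uparrow[D_n^2\mid \mathcal{F}_{n\delta}^\uparrow] \le C'\,\mathrm{e}^{-2\lambda_*(m_n+n)\delta}\,\mathrm{e}^{2\lambda_* m_n\delta}\sum_{i=1}^{N_{n\delta}^\uparrow}(\varphi/p)(x_i^\uparrow(n\delta)) = C'\,\mathrm{e}^{-2\lambda_*n\delta}\,\langle\varphi/p, X^\uparrow_{n\delta}\rangle.
\]
Taking expectations and using $\mathbb{E}^\uparrow_{\delta_{(r,\upsilon)}}[\langle \varphi/p, X^\uparrow_{n\delta}\rangle] = \mathrm{e}^{\lambda_* n\delta}(\varphi/p)(r,\upsilon)$ yields
\[
\mathbb{E}^\uparrow_{\delta_{(r,\upsilon)}}[D_n^2] \le C''\,\mathrm{e}^{-\lambda_* n\delta}(\varphi/p)(r,\upsilon),
\]
which, notably, is independent of $m_n$. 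Since $\lambda_*>0$, this bound is summable in $n$, so by Chebyshev's inequality $\sum_n \mathbb{P}^\uparrow_{\delta_{(r,\upsilon)}}(|D_n|>\epsilon)<\infty$ for every $\epsilon>0$, and the Borel--Cantelli lemma gives~\eqref{limit1}.

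The main obstacle lying behind this argument is establishing the uniform $L^2$-boundedness of $W^\uparrow$; the computation is not deep but requires checking that the second moment bounds of Section~10 of~\cite{SNTE} adapt to the skeletal motion $\mathbf{P}^\uparrow$ and branching generator $G^\uparrow$, using (H1), (H4) and the boundedness of $\varphi/p$ to handle the various tilted quantities.
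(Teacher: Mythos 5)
Your strategy (branching property at time $n\delta$, conditional variance, second-moment estimate, Borel--Cantelli) coincides with the paper's, and the algebraic bookkeeping of the exponential factors is correct. There is, however, one unsubstantiated step: you assert that $\sup_{x\in D\times V,\,t\ge 0}\mathbb{E}^\uparrow_{\delta_x}[(W_t^\uparrow)^2]<\infty$, claiming this ``transfers verbatim'' from the $L_2$ computation for $W$ in \cite{SNTE}. This is not what the transfer gives. The paper's actual estimate (see the display labelled \eqref{mgchange} and its consequence \eqref{arrowL2}) is the \emph{non-uniform} bound
\[
\mathbb{E}^\uparrow_{\delta_x}\bigl[(W_t^\uparrow)^2\bigr]\ \le\ K\,\frac{p(x)}{\varphi(x)},
\]
obtained by pushing the change of measure between $\mathbb{P}^\uparrow$ and $\mathbb{P}$ through the second moment and invoking Lemma~\ref{Simonlemma} together with the uniform bound on $\mathbb{E}_{\delta_x}[W_\infty^2]$ from \cite[Eqs.\ (10.1)--(10.3)]{SNTE}. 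Lemma~\ref{Simonlemma} only gives $\varphi/p\le C$, i.e.\ $p/\varphi\ge 1/C$; it does \emph{not} give an upper bound on $p/\varphi$, and since $\varphi$ degenerates near $\partial D$ while $p\le 1$, the ratio $p/\varphi$ need not be uniformly bounded. Thus the uniform $L^2$ statement you invoke is strictly stronger than what the paper establishes and would need its own proof.

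The good news is that the weaker, non-uniform bound is exactly what your computation needs. Feeding $\mathbb{E}^\uparrow_{\delta_x}[(W_t^\uparrow)^2]\le K\,p(x)/\varphi(x)$ into your variance estimate gives
\[
\mathrm{Var}^\uparrow_{\delta_x}\bigl(\langle g\varphi/p, X^\uparrow_t\rangle\bigr)\ \le\ \|g\|_\infty^2\,(\varphi/p)^2(x)\,\mathrm{e}^{2\lambda_* t}\cdot K\,\frac{p(x)}{\varphi(x)}\ =\ K\|g\|_\infty^2\,\mathrm{e}^{2\lambda_* t}\,(\varphi/p)(x),
\]
so the factor $p/\varphi$ from the $L^2$ estimate itself cancels one factor of $\varphi/p$, and the invocation of Lemma~\ref{Simonlemma} at that step becomes unnecessary. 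After this substitution your chain of inequalities leads to $\sum_n\mathbb{E}^\uparrow[D_n^2]\lesssim (\varphi/p)(r,\upsilon)\sum_n\mathrm{e}^{-\lambda_* n\delta}<\infty$ exactly as you intended, and the Borel--Cantelli conclusion follows. So the gap is localised: replace the claimed uniform $L^2$ bound with the derivable $p/\varphi$ bound (and drop the superfluous use of Lemma~\ref{Simonlemma} there), and the proof matches the paper's.
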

\begin{proof}

By the Borel-Cantelli lemma, it is sufficient to prove that for each $(r, \upsilon) \in D \times V$ and all $\varepsilon >0$,
\begin{equation}
\sum_{n\ge 1}\mathbb{P}^\uparrow_{\delta_{(r, \upsilon)}}\left(\big|U_{(m_n + n)\delta} - \mathbb{E}[U_{(m_n + n)\delta}|\mathcal{F}_{n\delta}^\uparrow]\big| > \varepsilon\right) <\infty.
\label{enoughforBC1}
\end{equation}

\smallskip

To this end, note that Markov's inequality gives
\begin{align}
&\mathbb{P}^\uparrow_{\delta_{(r, \upsilon)}}\left(\big|U_{(m_n + n)\delta} - \mathbb{E}[\right.U_{(m_n + n)\delta}|\mathcal{F}_{n\delta}^\uparrow]\big| > \varepsilon\left.\right)\notag
\\
&\hspace{2cm} 
\le \varepsilon^{-2}\mathbb{E}_{\delta_{(r, \upsilon)}}^\uparrow\left(\big|U_{(m_n + n)\delta} - \mathbb{E}[U_{(m_n + n)\delta}|\mathcal{F}_{n\delta}^\uparrow]\big|^2\right). 
\label{enoughforBC2}
\end{align}
Hence, let us consider the term in the conditional expectation on the right-hand side above. First note that 
\begin{equation}
U_{(m_n + n)\delta} - \mathbb{E}^\uparrow[U_{(m_n + n)\delta} | \mathcal{F}_{n\delta}^\uparrow] = \sum_{i=1}^{N_{n\delta}}{\rm e}^{-n\delta\lambda_*}( U_{m_n \delta}^{(i)} - \mathbb{E}^\uparrow[U_{m_n\delta}^{(i)} | \mathcal{F}_{n\delta}^\uparrow] ),
\label{branchprop}
\end{equation}
where, given $\mathcal{F}_t^\uparrow$, the $U^{(i)}$ are independent and equal in distribution to $U$ under $\mathbb{P}_{\delta_{(R_i(t), \Upsilon_i(t))}}^\uparrow$ and $\{(R_i(t), \Upsilon_i(t)): i = 1,\cdots, N_t\}$ describes the configuration of $X^\uparrow$ at time $t \ge 0$.
Note in particular, conditional on $\mathcal{F}_{n\delta}^\uparrow$, $Z_i = U_{m_n \delta}^{(i)} - \mathbb{E}^\uparrow(U_{m_n\delta}^{(i)} | \mathcal{F}_{n\delta}^\uparrow)$ are independent with $\mathbb{E}[Z_i] = 0$. The formula for the variance of sums of zero mean independent random variables 
together with the inequality $|a+b|^{2}\leq 2( |a|^{2} + |b|^{2})$, we get
\begin{align*}
\mathbb{E}^\uparrow&(|U_{(m_n + n)\delta} - \mathbb{E}[U_{(m_n + n)\delta}|\mathcal{F}_{n\delta}^\uparrow]|^2|\mathcal{F}_{n\delta}^\uparrow) \\
&= \sum_{i = 1}^{N_{n\delta}}{\rm e}^{-2\lambda_* n\delta}\mathbb{E}^\uparrow\left[\left.\bigg| U_{m_n \delta}^{(i)} - \mathbb{E}^\uparrow[U_{m_n\delta}^{(i)} | \mathcal{F}_{n\delta}^\uparrow] \bigg|^2\right|  \mathcal{F}_{n\delta}^\uparrow\right]\\
&\le  \sum_{i = 1}^{N_{n\delta}}{\rm e}^{-2\lambda_* n \delta} \mathbb{E}^\uparrow\left[4(| U_{m_n \delta}^{(i)}|^2 + |\mathbb{E}^\uparrow[U_{m_n\delta}^{(i)} | \mathcal{F}_{n\delta}^\uparrow]|^2) | \mathcal{F}_{n\delta}^\uparrow \right]\\
& \le 4\sum_{i = 1}^{N_{n\delta}} {\rm e}^{-2\lambda_* n \delta}\mathbb{E}^\uparrow\left[|U_{m_n\delta}^{(i)}|^2 | \mathcal{F}_{n\delta}\right],
\end{align*}
where we have used Jensen's inequality again in the final inequality.
Hence, with $\{(R_i(n\delta), \Upsilon_i(n\delta)): i = 1, \dots, N_{n\delta}\}$ describing the configurations of the particles at time $N_{n\delta}$ in $X^\uparrow$, we have
\begin{align}
\sum_{n = 1}^\infty & \mathbb{E}^\uparrow\left[ |U_{(m_n + n)\delta} - \mathbb{E}^\uparrow(U_{(m_n + n)\delta}|\mathcal{F}_{n\delta}^\uparrow)|^2 
\right] \notag\\
& \le 4\sum_{n = 1}^\infty {\rm e}^{-2\lambda_*n\delta}\mathbb{E}^\uparrow_{\delta_{(r, \upsilon)}}\left[ \sum_{i = 1}^{N_{n\delta}}\mathbb{E}^\uparrow_{\delta_{(R_i(n\delta), \Upsilon_i(n\delta))}}\left[ U_{m_n\delta}^2\right]\right] \notag\\ 
& \le 4\norm{g}_\infty^2\sum_{n = 1}^\infty{\rm e}^{-2\lambda_*n\delta} \mathbb{E}^\uparrow_{\delta_{(r, \upsilon)}}\left[ \sum_{i = 1}^{N_{n\delta}}\frac{\varphi(R_i(n\delta), \Upsilon_i(n\delta))^2}{p(R_i(n\delta), \Upsilon_i(n\delta))^2}\mathbb{E}^\uparrow_{\delta_{(R_i(n\delta), \Upsilon_i(n\delta))}}\left[ ({W}_{m_n\delta}^{\uparrow})^2\right]\right], \label{BC1}
\end{align}
where the final inequality was obtained by noting that, from the definitions of $U_t$ and $W_t^\uparrow$, we have
\[
\mathbb{E}^\uparrow_{\delta_{(r, \upsilon)}}[U_t^2] \le \Vert g \Vert_\infty^2\frac{\varphi(r, \upsilon)^2}{p(r, \upsilon)^2}\mathbb{E}^\uparrow_{\delta_{(r, \upsilon)}}[(W_t^\uparrow)^2].
\]

\smallskip 

Due to Theorem \ref{skeleton}, in particular Remark \ref{iii}, and the calculation leading to \eqref{identity1}, we have, for all $t\geq 0$,
{\color{black}\begin{align}
\mathbb{E}_{\delta_{(r, \upsilon)}}^\uparrow[(W_t^\uparrow)^2] &=
\frac{{\rm e}^{-2\lambda_*t}}{(\varphi(r, \upsilon)/p(r, \upsilon))^2} \mathbb{E}_{\delta_{(r, \upsilon)}}^\uparrow [\langle \varphi/p, X_t^\uparrow\rangle^2 ] \notag \\
&=
\frac{{\rm e}^{-2\lambda_*t}}{(\varphi(r, \upsilon)/p(r, \upsilon))^2} \mathbb{E}_{\delta_{(r, \upsilon)}}^\updownarrow \left[\langle \varphi/p, {\rm BinPP}(pX_t)\rangle^2 | c_\emptyset(0) = \uparrow\right] \notag \\
&\leq \frac{p(r, \upsilon)^2}{\varphi(r, \upsilon)^2} \left\{
{\rm e}^{-2\lambda_*t}\mathbb{E}_{\delta_{(r, \upsilon)}}\left[\langle \varphi^2/p, X_t\rangle \right]/p(r, \upsilon)
+
{\rm e}^{-2\lambda_*t}\mathbb{E}_{\delta_{(r, \upsilon)}}\left[\langle \varphi, X_t\rangle^2 \right]/p(r, \upsilon)
\right\}
\notag \\
&\leq
 C\left( \frac{{\rm e}^{-\lambda_*t}}{\varphi(r,\upsilon)}
+\mathbb{E}_{\delta_{(r, \upsilon)}}\left[W^2_t \right]
 \right)
p(r, \upsilon)\notag
\\
&\leq C\frac{p(r,\upsilon)}{\varphi(r,\upsilon)}\left(\varphi(r,\upsilon)\mathbb{E}_{\delta_{(r, \upsilon)}}\left[W^2_t \right]
+
1
\right)
 \label{mgchange}
\end{align}
where we have used Lemma \ref{Simonlemma} in the second inequality. From Corollary 5.3 of \cite{SNTE}, more precisely from its proof, we know that 
$\textstyle{\mathbb{E}_{\delta_{(r,\upsilon)}}[\sup_{t\geq 0}W_t^2}]<\infty$. Hence we have from Doob's maximal inequality that, for each fixed $t\geq 0$,
\begin{align}
\mathbb{E}_{\delta_{(r, \upsilon)}}^\uparrow\left[(W_{t}^\uparrow)^2\right]&\leq \mathbb{E}_{\delta_{(r, \upsilon)}}^\uparrow\left[\sup_{s\geq 0}(W_{s}^\uparrow)^2\right]\notag\\
&\leq \limsup_{s\to\infty}4\mathbb{E}_{\delta_{(r, \upsilon)}}^\uparrow\left[(W_{s}^\uparrow)^2\right]\notag\\
&\leq4C\frac{p(r,\upsilon)}{\varphi(r,\upsilon)}\left(\varphi(r,\upsilon)\mathbb{E}_{\delta_{(r, \upsilon)}}\left[W^2_\infty \right]
+
1
\right)\notag\\
&\leq4C\frac{p(r,\upsilon)}{\varphi(r,\upsilon)}\left(C'
+
1
\right)<\infty
\label{arrowL2}
\end{align} 
for some constant $C'$ which does not depend on $(r,\upsilon)$, where we have used \eqref{usebelow}.
(Note \eqref{arrowL2} implies that $W^\uparrow$ is an $L_2(\P^\uparrow)$-convergent martingale.)
\smallskip

Substituting the estimate  \eqref{mgchange} back into  \eqref{BC1} and making use of the uniform boundedness of $\varphi$, we get
\begin{align}
\sum_{n = 1}^\infty & \mathbb{E}^\uparrow\left[ |U_{(m_n + n)\delta} - \mathbb{E}^\uparrow(U_{(m_n + n)\delta}|\mathcal{F}_{n\delta}^\uparrow)|^2 \right] \notag\\
&\le K\norm{g}_\infty^2\sum_{n = 1}^{\infty}{\rm e}^{-2\lambda_* n\delta}\mathbb{E}_{\delta_{(r, \upsilon)}}^\uparrow\Bigg[\sum_{i = 1}^{N_{n\delta}}\frac{\varphi(R_i(n\delta), \Upsilon_i(n\delta))}{p(R_i(n\delta), \Upsilon_i(n\delta))}\Bigg], \label{bound1}
\end{align}
for some constant $K\in(0,\infty)$.
Now the fact that $\varphi/p$ is an eigenfunction for the linear semigroup of $X^\uparrow$, we get
\begin{align}
\sum_{n = 1}^\infty  \mathbb{E}^\uparrow\left[ |U_{(m_n + n)\delta} - \mathbb{E}^\uparrow(U_{(m_n + n)\delta}|\mathcal{F}_{n\delta}^\uparrow)|^2 \right] 
&\le K\norm{g}_\infty^2\sum_{n \ge 1}{\rm e}^{-2\lambda_* n\delta}\mathbb{E}_{\delta_{(r, \upsilon)}}^\uparrow[\langle \varphi/p, X_{n\delta}^\uparrow\rangle] \notag \\
&= K\norm{g}_\infty^2\frac{\varphi(r,\upsilon)}{p(r, \upsilon)}\sum_{n \ge 1}{\rm e}^{-\lambda_* n\delta}<\infty. \label{bound2}
\end{align}
The result now follows by \eqref{enoughforBC1} and \eqref{enoughforBC2}. 
}
\end{proof}

{\color{black}
It is worth noting that a small corollary falls out of the above proof, which will be useful later on.
\begin{corollary}\label{freebie}
We have $\textstyle{\sup_{t\geq 0}W^\uparrow_t}$ is square integrable and hence $W^\uparrow$ converges in $L_2(\mathbb{P}^\uparrow)$.
\end{corollary}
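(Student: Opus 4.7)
The plan is that this corollary is essentially a free byproduct of the estimates already derived in the proof of Lemma \ref{lemma1}, and requires only a brief organisational argument. The key observation is that the chain of inequalities \eqref{mgchange}–\eqref{arrowL2} already establishes, for each $(r,\upsilon)\in D\times V$, that
\[
\sup_{t\geq 0}\mathbb{E}^\uparrow_{\delta_{(r,\upsilon)}}\!\left[(W^\uparrow_t)^2\right] \leq 4C\frac{p(r,\upsilon)}{\varphi(r,\upsilon)}\left(C' + 1\right) < \infty,
\]
where the bound on $\sup_{t\geq 0}\mathbb{E}_{\delta_{(r,\upsilon)}}[W_t^2]$ comes from Corollary 5.3 of \cite{SNTE}. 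So the first task is simply to isolate this uniform-in-$t$ $L_2$ bound for the non-negative martingale $W^\uparrow$.

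Next, I would invoke Doob's $L_2$ maximal inequality for the non-negative martingale $(W^\uparrow_t,t\geq 0)$ under $\mathbb{P}^\uparrow_{\delta_{(r,\upsilon)}}$, giving
\[
\mathbb{E}^\uparrow_{\delta_{(r,\upsilon)}}\!\left[\sup_{t\geq 0}(W^\uparrow_t)^2\right] \leq 4\sup_{t\geq 0}\mathbb{E}^\uparrow_{\delta_{(r,\upsilon)}}\!\left[(W^\uparrow_t)^2\right] < \infty,
\]
which is exactly the square-integrability of $\sup_{t\geq 0}W^\uparrow_t$. This extends to a general initial configuration $\mu\in\mathcal{M}(D\times V)$ by the branching property.

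Finally, for the $L_2$ convergence, the plan is to combine the almost sure convergence $W^\uparrow_t\to W^\uparrow_\infty$ (already established as the limit of the non-negative martingale in the discussion preceding \eqref{bbonemg}) with the uniform domination $(W^\uparrow_t)^2\leq (\sup_{s\geq 0}W^\uparrow_s)^2$, which is in $L_1(\mathbb{P}^\uparrow)$ by the previous step. The dominated convergence theorem then yields $\mathbb{E}^\uparrow[(W^\uparrow_t - W^\uparrow_\infty)^2]\to 0$.

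There is no genuine obstacle: the only real work has already been done inside the proof of Lemma \ref{lemma1}, specifically in the computation \eqref{mgchange} exploiting the Binomial thinning representation from Remark \ref{iii} together with the $L_2$ boundedness of the original martingale $W$. The corollary simply packages this into a standalone statement via Doob's maximal inequality.
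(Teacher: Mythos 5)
Your proposal is correct and follows essentially the same route as the paper: the estimate \eqref{arrowL2} inside the proof of Lemma~\ref{lemma1} already applies Doob's maximal inequality to the uniform $L_2$ bound from \eqref{mgchange}, yielding $\mathbb{E}^\uparrow_{\delta_{(r,\upsilon)}}[\sup_{s\geq 0}(W^\uparrow_s)^2]<\infty$, which is exactly the content of the corollary. The only cosmetic difference is that you close with dominated convergence using the maximal function as the dominating variable, whereas the paper simply invokes the standard $L_2$-bounded martingale convergence theorem; these are equivalent.
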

}

\begin{proof}[Proof of Theorem~\ref{SLLNbbone} (lattice sequences)] 
We have already noted that 
\[
\mathbb{E}_{\delta_{(r, \upsilon)}}^\uparrow\left[U_{t+s}\big| \mathcal{F}_t^\uparrow \right] = \sum_{i = 1}^{N_t}{\rm e}^{-\lambda_* t}\bar U_s^{(i)},
\]
where, given $\mathcal{F}_t^\uparrow$, the $\bar U_s^{(i)}$ are independent and equal  to  $\mathbb{E}_{\delta_{(R_i(t), \Upsilon_i(t))}}^\uparrow[U_s]$ and $\{(R_i(t), \Upsilon_i(t)): i = 1,\cdots, N_t\}$ describes the configuration of $X^\uparrow$ at time $t \ge 0$. Hence, once again using \eqref{identity1}, as well as \eqref{semigroup}, we have
\begin{align}
\mathbb{E}^\uparrow_{\delta_{(r, \upsilon)}}\left[U_{t+s} | \mathcal{F}_t^\uparrow \right]
 &= \sum_{i = 1}^{N_t}{\rm e}^{-\lambda_* t}\mathbb{E}_{\delta_{(R_i(t), \Upsilon_i(t))}}^\uparrow \left[{\rm e}^{-\lambda_* s}\langle g\varphi/p, X_s^\uparrow\rangle \right] \notag \\
&= \sum_{i = 1}^{N_t}{\rm e}^{-\lambda_* t}\frac{\mathbb{E}_{\delta_{(R_i(t), \Upsilon_i(t))}}[{\rm e}^{-\lambda_* s}\langle g\varphi, X_s\rangle]}{p(R_i(t), \Upsilon_i(t))}\notag \\
&= \sum_{i = 1}^{N_t}{\rm e}^{-\lambda_* t}\frac{{\rm e}^{-\lambda_* s}\psi_s[\varphi g](R_i(t), \Upsilon_i(t))}{p(R_i(t), \Upsilon_i(t))}
 \notag \\
&= \frac{p(r, \upsilon)}{\varphi(r, \upsilon)} \langle g\varphi, \tilde\varphi\rangle W_t^\uparrow \notag\\
&\hspace{1cm}+ \sum_{i = 1}^{N_t}{\rm e}^{-\lambda_* t}
\left( {\rm e}^{-\lambda_* s}\frac{\psi_s[\varphi g](R_i(t), \Upsilon_i(t))}{\varphi(R_i(t), \Upsilon_i(t))} - \langle g\varphi, \tilde\varphi\rangle \right)
\frac{\varphi(R_i(t), \Upsilon_i(t))}{p(R_i(t), \Upsilon_i(t))}. \label{step1}
\end{align}
Appealing to Theorem \ref{PF}, we can pick  $s $ sufficiently large so that, for any given $\varepsilon>0$,
\begin{equation}
\norm{  {\rm e}^{-\lambda_* s}\varphi^{-1}\psi_s[\varphi g] -\langle\tilde\varphi, \varphi g\rangle}_\infty < \varepsilon.
\label{unifconv}
\end{equation}
Combining this with~\eqref{step1} yields
\begin{equation}
\lim_{t \to \infty}\left| \mathbb{E}_{\delta_{(r, \upsilon)}}^\uparrow[U_{t + s} | \mathcal{F}_t^\uparrow] - W_\infty^\uparrow \langle \varphi g, \tilde\varphi \rangle\frac{p(r, \upsilon)}{\varphi(r, \upsilon)}\right| = 0.
\end{equation}
The above combined with the conclusion of Lemma \ref{lemma1} gives the conclusion of Theorem \ref{SLLN} along lattice sequences.
\end{proof}

We now make the transition from lattice times to continuous times. 

\begin{proof}[Proof of Theorem~\ref{SLLNbbone} (full sequence)]
For $\varepsilon > 0$ and $(r, \upsilon) \in D \times V$, define
\[
\Omega_\varepsilon(r, \upsilon) \coloneqq \left\{(r', \upsilon') \in D\times V \,: \, g(r', \upsilon')\frac{\varphi(r', \upsilon')}{p(r', \upsilon')} \geq (1+\varepsilon)^{-1}g(r, \upsilon)\frac{\varphi(r, \upsilon)}{p(r, \upsilon)}\right\}.
\]
If we consider the equation \eqref{nonlinear} for the special setting of the NBP, we can decompose it over the first scatter event, rather than the first fission event, from which we will obtain 
{\color{black}\[
w(r, \upsilon) = \hat{\U}_t[w](r, \upsilon) + \int_0^t\U_s\left[ \bS w +G[w]\right](r, \upsilon) \D s, \qquad t\geq 0, x\in E,
\]}
where the semigroup $(\U_t, t\geq 0)$ was defined in \eqref{adv}, $(\hat{\U}_t, t\geq 0)$ was defined in \eqref{advhat}, and the scattering operator $\bS$ was defined in \eqref{S}.
This implies that, for a given $r\in D$ and $\upsilon\in V$, $w(r+\upsilon t, \upsilon)$, and hence $p(r+\upsilon t, \upsilon)$, are continuous for all $t$ sufficiently small. Similarly noting that $\psi_t[\varphi] = {\rm e}^{\lambda_*t}\varphi$, from \eqref{mild}, we can also deduce a similar continuity property of $\varphi$. Hence, together with the assumed directional continuity of $g$, for each $r\in D$, $\upsilon\in V$ and $\varepsilon\ll 1$, there exists a $\delta_\varepsilon$ such that $(r+\upsilon t,\upsilon)\in \Omega_\varepsilon(r, \upsilon) $ for all $t\leq \delta_\varepsilon$.
\smallskip

Next, for each $\delta > 0$ define
\[
\Xi^{\delta, \varepsilon}(r, \upsilon) \coloneqq \mathbf{1}_{\{{\rm supp}(X_t^\uparrow) \, \subset \, \Omega_\varepsilon(r, \upsilon) \text{ for all } t \in [0, \delta]\}}, \quad (r, \upsilon) \in D \times V,
\]
and let $\eta^{\delta, \varepsilon}(r, \upsilon) = \mathbb{E}_{\delta_{(r, \upsilon)}}^\uparrow[\Xi^{\delta, \varepsilon}(r, \upsilon)]\leq 1$.
Appealing to Fatou's Lemma and the continuity properties discussed above, we have, for  $\varepsilon\ll 1$, 
\begin{align*}
\liminf_{\delta\downarrow0}\eta^{\delta, \varepsilon}(r, \upsilon)&\geq \mathbb{E}^\uparrow_{\delta_{(r,\upsilon)}}[\liminf_{\delta\downarrow0}\mathbf{1}_{\{{\rm supp}(X_t^\uparrow) \, \subset \, \Omega_\varepsilon(r, \upsilon) \text{ for all } t \in [0, \delta]\}}]\\
& = \mathbb{E}^\uparrow_{\delta_{(r,\upsilon)}}[\lim_{\delta\downarrow0}\mathbf{1}_{\{(r + \upsilon t, \upsilon)\in\Omega_\varepsilon(r, \upsilon) \text{ for all } t \in [0, \delta]\}}]\\
&=1.
\end{align*}

Since we can effectively see the skeleton as producing at least one\footnote{Although a subtle point in the argument, this is fundamentally the reason why the skeletal decomposition is needed and makes the proof much easier than otherwise.} offspring at every fission event (see also the discussion in Remark \ref{1or2}), it follows that if $t \in [n\delta, (n+1)\delta)$ then,
\begin{align}
&{\rm e}^{-\lambda_* t}\langle g{\varphi}/{p}, X^\uparrow_t \rangle \notag\\
&\ge \frac{{\rm e}^{-\delta}}{(1+\varepsilon)}\sum_{i = 1}^{N_{n\delta}}{\rm e}^{-\lambda_* n\delta}g(R_i(n\delta), \Upsilon_i(n\delta))\frac{\varphi(R_i(n\delta), \Upsilon_i(n\delta))}{p(R_i(n\delta), \Upsilon_i(n\delta))}\Xi^{\delta, \varepsilon}(R_i(n\delta), \Upsilon_i(n\delta)).
\label{estimate}
\end{align}
If we denote the summation on the right-hand side of the above equation by $\tilde{U}_{n\delta}(r, \upsilon)$, and assume that ${\rm supp} (g)$ is compactly embedded in $D$,   then we can apply similar arguments to those given in the proof of Lemma~\ref{lemma1} together with \eqref{identity1} to show that
\begin{align}
\sum_{n = 1}^\infty &\mathbb{E}_{\delta_{(r, \upsilon)}}^\uparrow\left[|\tilde{U}_{n\delta} - \mathbb{E}^\uparrow[\tilde{U}_{n\delta} | \mathcal{F}_{n\delta}^\uparrow] |^2\right] \notag \\
& \hspace{0.5cm}\le C\sum_{n = 1}^\infty {\rm e}^{-\lambda_* n\delta q}\mathbb{E}_{\delta_{(r, \upsilon)}}^\uparrow\left[\sum_{i = 1}^{N_{n\delta}} g(R_i(n\delta), \Upsilon_i(n\delta))^2\frac{\varphi(R_i(n\delta), \Upsilon_i(n\delta))^2}{p(R_i(n\delta), \Upsilon_i(n\delta))^2} \eta^{\delta, \varepsilon}(R_i(n\delta), \Upsilon_i(n\delta))\right] \notag \\
&\hspace{2cm}\le C\sum_{n = 1}^\infty {\rm e}^{-2\lambda_* n\delta }\mathbb{E}_{\delta_{(r, \upsilon)}}^\uparrow \left[\langle (g{\varphi}/{p})^2, X_{n\delta}^\uparrow\rangle\right]\notag\\
&\hspace{2cm}\le \frac{ C}{p(r, \upsilon)}\sum_{n = 1}^\infty {\rm e}^{-2\lambda_* n\delta }\mathbb{E}_{\delta_{(r, \upsilon)}} \left[\langle (g\varphi)^2 p^{-1}, X_{n\delta}\rangle\right]  \notag\\
&\hspace{2cm}= \frac{C}{p(r, \upsilon)}\sum_{n = 1}^\infty{\rm e}^{-2\lambda_*n\delta }\psi_{n\delta}[(g\varphi)^2 p^{-1}](r,\upsilon).
\label{UB1}
\end{align}
Note in particular that the compact embedding of the support of $g$ in $D\times V$ together with 
Lemma \ref{Simonlemma}, the fact that $p\leq 1$, $\varphi$ belongs to $L^+_\infty(D\times V)$ and is bounded away from $0$ on compactly embedded subsets of $D\times V$ ensures that $(g\varphi)^2 p^{-1}$ is uniformly bounded away from $0$ and $\infty$ and hence, taking account of the conclusion of Theorem \ref{PF}, the expectation on the right-hand side of \eqref{UB1} is finite.
\smallskip

Noting that 
\[
\mathbb{E}^\uparrow[\tilde{U}_{n\delta} | \mathcal{F}_{n\delta}^\uparrow] = {\rm e}^{-\lambda_*n\delta}\langle g \varphi\eta^{\delta, \varepsilon}/p, X_{n\delta}^\uparrow\rangle,
\]
the consequence of \eqref{UB1}, when taken in the light of the  Borel-Cantelli Lemma and the already proved limit \eqref{onlaticefirst} on lattice times, means that, $\P_{\delta_{(r,\upsilon)}}$-almost surely,
\[
\liminf_{t \to \infty}{\rm e}^{-\lambda_* t}\langle g{\varphi}/{p}, X_t^\uparrow \rangle \ge \frac{{\rm e}^{-\delta}}{1+\varepsilon}\langle g {\varphi}\eta^{\delta, \varepsilon}/{p}, \tilde\varphi p\rangle W_\infty^\uparrow \frac{\varphi(r, \upsilon)}{p(r, \upsilon)}.
\]
Letting $\delta \downarrow 0$ with the help of Fatou's Lemma and then $\varepsilon \downarrow 0$ in the above inequality yields
\begin{equation}
\liminf_{t \to \infty}{\rm e}^{-\lambda_* t}\langle g{\varphi}/{p}, X_t^\uparrow \rangle \ge \langle g\varphi, \tilde\varphi \rangle W_\infty^\uparrow \frac{\varphi(r, \upsilon)}{p(r, \upsilon)},
\label{lower1}
\end{equation}
$\P_{\delta_{(r,\upsilon)}}$-almost surely.  Now replacing  $g$ by   $hp/\varphi$, ensuring still that the support of $h$ is compactly embedded in $D \times V$, so that $hp/\varphi$  is uniformly bounded away from 0 and $\infty$, the lower bound \eqref{lower1} yields
\begin{equation}
\liminf_{t \to \infty}{\rm e}^{-\lambda_* t}\langle h, X_t^\uparrow \rangle \ge \langle h, \tilde\varphi  p\rangle W_\infty^\uparrow \frac{\varphi(r, \upsilon)}{p(r, \upsilon)}.
\label{lower2}
\end{equation}
We can push \eqref{lower2} a little bit further by removing the requirement that the support of $h$  is compactly embedded in $D \times V$. Indeed, suppose that, for $n\geq 1$, $h_n = h\mathbf{1}_{B_n}$, where $h\leq c \varphi/p$ for some constant $c>0$ and  $B_n$ is an increasing sequence of compactly embedded  domains in $D\times V$, such that $\cup_{n\geq 1}B_n= D\times V$.
Then \eqref{lower2} and  together with monotonicity gives us
\begin{align}
\liminf_{t \to \infty}{\rm e}^{-\lambda_* t}\langle h, X_t^\uparrow \rangle &\ge 
\lim_{n\to\infty}
\liminf_{t \to \infty}{\rm e}^{-\lambda_* t}\langle h_n, X_t^\uparrow \rangle \notag\\
&\ge
\lim_{n\to\infty}
\langle h_n, \tilde\varphi p\rangle W_\infty^\uparrow \frac{\varphi(r, \upsilon)}{p(r, \upsilon)} \notag\\
&=\langle h, \tilde\varphi p\rangle W_\infty^\uparrow \frac{\varphi(r, \upsilon)}{p(r, \upsilon)}
\label{liminf2}
\end{align} 
$\P_{\delta_{(r,\upsilon)}}$-almost surely.
\smallskip

To complete the proof of Theorem \ref{SLLNbbone} it now suffices to show that, $\P_{\delta_{(r,\upsilon)}}$-almost surely,  $\limsup_{t \to \infty}{\rm e}^{-\lambda_* t}\langle g, X_t^\uparrow \rangle \leq      \langle g, \tilde\varphi p\rangle W_\infty^\uparrow {\varphi(r, \upsilon)}/{p(r, \upsilon)}.$
To this end note that, for  $0\leq g \leq c \varphi/p$, for some constant $c>0$ (which, without loss of generality, we may take equal to 1), 
\begin{align*}
\limsup_{t \to \infty}{\rm e}^{-\lambda_* t}\langle g, X_t^\uparrow \rangle  &= \limsup_{t \to \infty}\left(\frac{\varphi(r, \upsilon)}{p(r, \upsilon)}W_t^\uparrow -  {\rm e}^{-\lambda_* t}\langle \varphi/p - g , X_t^\uparrow\rangle\right) \\
&= \frac{\varphi(r, \upsilon)}{p(r, \upsilon)}W_\infty^\uparrow - \liminf_{t \to\infty}{\rm e}^{-\lambda_* t}\langle \varphi/p - g , X_t^\uparrow\rangle\\
& \le \frac{\varphi(r, \upsilon)}{p(r, \upsilon)}W_\infty^\uparrow - \langle \varphi/p - g, \tilde\varphi p\rangle\frac{\varphi(r, \upsilon)}{p(r, \upsilon)}W_\infty^\uparrow \\
&= \langle g, \tilde\varphi p\rangle W_\infty^\uparrow \frac{\varphi(r, \upsilon)}{p(r, \upsilon)},
\end{align*}
as required, where we have used the normalisation $\langle\varphi,\tilde\varphi\rangle = 1$.
\end{proof}

\section{Proof of Theorem \ref{SLLN}}
The proof we will give relies on the stochastic embedding of the skeleton 
process $X^\uparrow$ in $X$ together with a measure theoretic trick. It is worth stating the latter in the format of a proposition which is essentially taken from~\cite{HR}. (The reader may note that there is a slight variation in the statement as the original version 
was missing an additional condition.) 
\begin{prop}\label{MattSimon}
Let $(\Omega, \mathcal{F}, (\mathcal{F}_t,t\geq 0), \mathbb{P})$ be a filtered probability space and define $\mathcal{F}_\infty \coloneqq \sigma(\cup_{i = 1}^\infty\mathcal{F}_t)$. Suppose $(U_t, t\geq 0)$ is an $\mathcal{F}$-measurable non-negative process such that $\textstyle{\sup_{t\geq 0}U_t}$ has finite expectation and  $(\mathbb{E}(U_t | \mathcal{F}_t), t\geq0)$ is c\`adl\`ag. If 
\[
\lim_{t\to\infty}\mathbb{E}(U_t | \mathcal{F}_\infty) = Y, \text{ a.s,}
\]
then
\[
\lim_{t\to\infty}\mathbb{E}(U_t | \mathcal{F}_t) =Y, \text{ a.s.}.
\]
\end{prop}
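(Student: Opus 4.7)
The plan is to reduce the claim to the convergence of martingales attached to fixed integrable random variables, via the tower property and two applications of L\'evy's upward martingale convergence theorem. Set $V_t := \mathbb{E}(U_t\mid \mathcal{F}_\infty)$, so that by hypothesis $V_t \to Y$ almost surely, and by the tower property $\mathbb{E}(U_t\mid \mathcal{F}_t) = \mathbb{E}(V_t\mid \mathcal{F}_t)$. Since $0\le V_t \le \mathbb{E}(\sup_{s\ge 0}U_s\mid \mathcal{F}_\infty)$, which is integrable by hypothesis, the limit $Y$ inherits the same integrable envelope and is $\mathcal{F}_\infty$-measurable. L\'evy's upward theorem applied to the (fixed) integrable random variable $Y$ yields $\mathbb{E}(Y\mid \mathcal{F}_t) \to Y$ almost surely in continuous time (using the c\`adl\`ag version of the associated martingale), so it suffices to show that $\mathbb{E}(V_t - Y\mid \mathcal{F}_t) \to 0$ almost surely as $t\to\infty$.

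For this, introduce the envelope process $W_t := \sup_{s\ge t}\lvert V_s - Y\rvert$, which is non-increasing in $t$, tends to $0$ almost surely, and is bounded above by the integrable random variable $2\mathbb{E}(\sup_{s\ge 0}U_s\mid \mathcal{F}_\infty)$. The key estimate is
\[
\bigl\lvert \mathbb{E}(V_t - Y\mid \mathcal{F}_t)\bigr\rvert \;\le\; \mathbb{E}(W_t\mid \mathcal{F}_t) \;\le\; \mathbb{E}(W_s\mid \mathcal{F}_t) \qquad \text{for every fixed } 0\le s\le t.
\]
For each fixed $s$, the process $t\mapsto \mathbb{E}(W_s\mid \mathcal{F}_t)$ is a uniformly integrable c\`adl\`ag martingale, and L\'evy's upward theorem gives $\mathbb{E}(W_s\mid \mathcal{F}_t) \to \mathbb{E}(W_s\mid \mathcal{F}_\infty) = W_s$ almost surely as $t\to\infty$. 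Consequently $\limsup_{t\to\infty}\lvert \mathbb{E}(V_t - Y\mid \mathcal{F}_t)\rvert \le W_s$ almost surely. Taking $s$ through a countable sequence tending to infinity along which $W_s\downarrow 0$ yields the required conclusion.

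The role of the c\`adl\`ag assumption on $(\mathbb{E}(U_t\mid \mathcal{F}_t))_{t\ge 0}$ is to pin down a version of the process along continuous time so that the pathwise limit as $t\to\infty$ is meaningful; without it one would only get convergence along countable index sets. The only genuinely delicate step, and the point at which one might expect difficulty, is the passage from a family of martingale convergences (one for each fixed $s$) to a uniform statement in $t$ for the ``diagonal'' process $\mathbb{E}(W_t\mid \mathcal{F}_t)$. The monotone domination $W_t\le W_s$ for $t\ge s$ is precisely what enables this: it exchanges the awkward $t$-dependence inside the conditional expectation for the $t$-dependence of the conditional expectation of a \emph{single} integrable random variable, to which L\'evy's upward theorem applies directly.
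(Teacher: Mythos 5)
Your proof is correct and follows the same essential route as the paper: shift to $V_t := \mathbb{E}(U_t\mid\mathcal{F}_\infty)$ via the tower property, exploit the integrable envelope $\mathbb{E}(\sup_s U_s\mid\mathcal{F}_\infty)$, and conclude by a dominated L\'evy-type convergence. The difference is that the paper simply cites \emph{Hunt's Lemma} (if $Y_n\to Y$ a.s.\ with $\sup_n|Y_n|$ integrable, then $\mathbb{E}(Y_n\mid\mathcal{F}_n)\to\mathbb{E}(Y\mid\mathcal{F}_\infty)$ a.s.) and then uses the assumed right-continuity of $\mathbb{E}(U_t\mid\mathcal{F}_t)$ to pass from lattice to continuous time, whereas you re-derive Hunt's Lemma from scratch via the monotone envelope $W_s=\sup_{u\ge s}|V_u-Y|$ and the two-step limit $t\to\infty$ then $s\to\infty$. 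That re-derivation is the standard proof of Hunt's Lemma, so nothing is gained or lost mathematically, but two small technical points deserve a word: the supremum defining $W_s$ should be taken over a countable dense set of times to ensure measurability, and your argument quietly requires c\`adl\`ag versions of the auxiliary martingales $\mathbb{E}(Y\mid\mathcal{F}_t)$ and $\mathbb{E}(W_s\mid\mathcal{F}_t)$ in order to pass to continuous time, which the hypothesis does not grant directly (only $\mathbb{E}(U_t\mid\mathcal{F}_t)$ is assumed c\`adl\`ag). The paper's route — discrete Hunt along increasing lattices of rationals, then the c\`adl\`ag hypothesis on the single given process — sidesteps that extra requirement, so it is marginally leaner, though under the usual completeness/right-continuity conventions both arguments go through.
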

In fact, this result can be readily obtained by considering $Y_t:=\E(U_t|\mathcal{F}_\infty)$ then 
using right continuity and \emph{Hunt's Lemma}: If $Y_n\rightarrow Y$ a.s., 
$(Y_n, n\in\N)$ is dominated by $\textstyle{\sup_{n\in \N}| Y_n|}$ with $\textstyle{\E\sup_{n\in \N} |Y_n|<\infty}$, 
then $\E(Y_n|\mathcal F_n) \rightarrow \E(Y|\mathcal F_\infty)$ a.s..
\smallskip

We will take the quantities in the above proposition from their definition in the context of the physical process of the neutron transport equation. 
In a similar fashion to the proof of Theorem~\ref{SLLNbbone}, set $U_t = {\rm e}^{-\lambda_*t}\langle g, X_t^\uparrow\rangle$, for $g\in L^+_\infty(D\times V)$, and recall that  $(\mathcal{F}_t, t \ge 0)$ is the filtration generated by the neutron branching process $(X_t, t \ge 0)$. Note that 
we can easily bound $(U_t,t\geq 0)$ by  a multiple of $(W^\uparrow_t,t\geq 0)$ and hence we automatically get that $\textstyle{\sup_{t\geq 0}U_t}$ has a second, and hence first, moments thanks to Corollary \eqref{freebie}.
Due to Theorem~\ref{SLLNbbone} and the fact that $X_t^\uparrow$ is $\mathcal{F}_\infty$-measurable, $U_t = \mathbb{E}(U_t | \mathcal{F}_\infty) $ and hence
\[
\lim_{t\to\infty}\mathbb{E}(U_t | \mathcal{F}_\infty) = \langle gp, \tilde\varphi\rangle W_\infty^\uparrow \frac{\varphi(r, \upsilon)}{p(r, \upsilon)}
\]
$\P_{\delta_{(r,\upsilon)}}$-almost surely, for $r\in D$, $\upsilon\in V$.
\smallskip

Using \eqref{identity1}  (which comes from the skeleton embedding  Theorem \ref{skeleton}, cf. Remark \ref{iii}) as we have in the proof of Theorem \ref{SLLNbbone},  we get
\begin{align*}
\mathbb{E}(U_t | \mathcal{F}_t) &= \mathbb{E}({\rm e}^{-\lambda_* t}\langle g, X_t^\uparrow\rangle| \mathcal{F}_t)
= {\rm e}^{-\lambda_* t}\langle g, pX_t\rangle.
\end{align*}

Combining this with Proposition~\ref{MattSimon} yields
\begin{equation}
\lim_{t\to\infty}{\rm e}^{-\lambda_* t}\langle g, pX_t\rangle = \langle gp, \tilde\varphi\rangle W_\infty^\uparrow \frac{\varphi(r, \upsilon)}{p(r, \upsilon)},
\label{gp}
\end{equation}
$\P_{\delta_{(r,\upsilon)}}$-almost surely. If the support of $g$ is compactly embedded in $D \times V$, then we can replace $g$ by $g/p$, with the assurance that the latter is uniformly bounded away from $0$ and $\infty$ (cf. Lemma \ref{Simonlemma}), and \eqref{gp} gives us
\begin{equation}
\lim_{t\to\infty}{\rm e}^{-\lambda_* t}\langle g, X_t\rangle = \langle g, \tilde\varphi\rangle W_\infty^\uparrow \frac{\varphi(r, \upsilon)}{p(r, \upsilon)},
\label{gp2}
\end{equation}
$\P_{\delta_{(r,\upsilon)}}$-almost surely.  We can remove the assumption that the support of $g$ is compactly embedded in $D \times V$ by appealing to similar reasoning as that of the computation in \eqref{liminf2}.

\smallskip

To complete the proof of almost sure convergence, we need to show that $W_\infty^\uparrow/p = W_\infty$, almost surely. To do so, note that if we take $g= \varphi$ in \eqref{gp2},  noting that the left-hand side is equal to $\lim_{t\to\infty}W_t\varphi(r,\upsilon)$ and $\langle\varphi, \tilde\varphi\rangle=1$, we get the desired result.

\smallskip

Finally, for the convergence in $L_2(\P)$, first recall that we already know that $\textstyle{\E(\sup_{t\geq0} W_t^2)<\infty}$ by Doob's $L_p$-inequality and $L_2(\P)$-boundedness of $W$ (see discussion within proof of Lemma \ref{lemma1}). 
Then, by assumption $g\leq \phi$, we similarly have $\textstyle{\sup_{t\geq0} \langle g, X_t \rangle}$ in $L_2(\P)$, hence we can use the dominated convergence theorem to conclude that we have convergence in $L_2(\P)$, as well as almost surely.
\hfill$\square$
\section{Concluding remarks}

The proof of Theorem \ref{SLLN} above gives a generic approach for branching particle systems which have an identified skeletal decomposition. Indeed, the reasoning is robust and will show in any such situation that the existence of a strong law of large numbers for the skeleton implies almost immediately a strong law of large numbers for the original process into which the skeleton is embedded. 
As an exercise, the reader is encouraged to consider the setting of a branching Brownian motion in a strip (cf. \cite{HHK}). Supposing a strong law of large numbers exists on the skeleton there (in that setting it is called the `blue tree'), then we claim that the  the above reasoning applied verbatim will deliver the strong law of large numbers for the branching Brownian motion in a strip.

\smallskip

More generally, we claim that, modulo some minor technical modifications (e.g. taking account of the fact that $E$ may be unbounded),  in the general MBP setting of Theorem \ref{skeleton}, an analogue of Theorem \ref{SLLN} may be reconstructed  once the following three important components are in hand:  (i) An analogue of Theorem \ref{PF}; (ii) A degree of knowledge concerning the continuity properties of $\varphi$ and $p$; (iii) the martingale $W$ has the property $\textstyle{\mathbb{E}_{\delta_x}[\sup_{t\geq 0}W_t^2]<\infty}$, for all $x\in E$. Indeed, last of these three may be weakened to $\gamma$-integrability of the martingale $W$, for $\gamma\in(1,2)$, in which case one may replace many of the estimates in the Borel-Cantelli arguments by $\gamma$ moment estimates instead of second moment estimates (see e.g. \cite{EHK} for comparison).
\smallskip

It is also worth pointing out however that the reasoning in the proof of Theorem \ref{SLLN} does not so obviously work in the setting of superprocesses with a skeletal decomposition. Indeed a crucial step, which is automatic for branching particle systems, but less obvious for superprocesses, is the point in the argument at which we claim that $U_t = \mathbb{E}(U_t | \mathcal{F}_\infty) $. In the particle system, this statement follows immediately from the fact that $\mathcal{F}_\infty$ carries enough information to construct the marks $\uparrow$ and $\downarrow$ on particles because individual genealogical lines of descent are identifiable. For superprocesses, it is less clear how to choose the filtration $(\mathcal{F}_t, t\geq 0)$ so that the notion of genealogy or otherwise can be used to claim that $X^\uparrow_t$, and hence  $U_t$, is $\mathcal{F}_\infty$-measurable.

\section*{Acknowledgements} The body of work in this article as well as \cite{SNTE, MultiNTE} was born out of a surprising connection that was made at the problem formulation ``Integrative Think Tank'' as part of the EPSRC Centre for Doctoral Training SAMBa in the summer of 2015. We are indebted to Professor Paul Smith and Dr. Geoff Dobson from the ANSWERS modelling group at Wood for the extensive discussions as well as hosting at their offices in Dorchester.  We are also grateful to Denis Villemonais for discussions on general convergence theorems for semigroups.
We are also grateful to a referee and the AE who made a number of very helpful suggestions.

\bibliography{references}{}
\bibliographystyle{plain}

\begin{table}[h]
\begin{tabular}{l l l}
&\multicolumn{1}{c}{\bf Glossary of some commonly used notation}&\\
&\multicolumn{1}{c}{\bf (Th. = Theorem, a. = above, b. = below)}&\\
&&\\
\hline
Notation & Description & Introduced\\
\hline
&&\\
$(\psi_t, t\geq 0)$ & Solution to mild NTE/NBP expectation 
semigroup& \eqref{semigroup}, \eqref{mild}\\
$D$ and $V$ & Physical and velocity domain & \S \ref{intro}\\
$\sigma_{\texttt{s}}$, $\sigma_{\texttt{f}}$ and $\sigma$ & Scatter, fission and total cross-sections 
&b. \eqref{bNTE}\\
$\pi_{\texttt s}$ and $\pi_{\texttt f}$ & Scatter and fission kernels & b. \eqref{bNTE}\\
$\bS$ and $\bF$ & Scatter and fission operators & \eqref{S}, \eqref{F}\\

$n_\texttt{max}$ & Maximum number of neutrons in a fission event& b. \eqref{Erv}\\
$\lambda_*$, $\varphi$ and $\tilde\varphi$& Leading eigenvalue, right- and left-eigenfunctions &Th. \ref{PF}\\
$(W_t, t\geq 0)$& Additive martingale &\eqref{addmg}\\

&&\\
\hline
&&\\
$E$ and $\dagger$ & Domain and cemetery state on which $\bP$ and $\xi$ is defined &\S \ref{MBP}\\
$\bP$ and $\hat\bP$ &  Particle motion semigroup on $E$  and $E\cup\{\dagger\}$ resp.& \S \ref{MBP}\\
$\bL$ & Generator associated to $\bP$ in the setting of NBP &\eqref{L}\\
$(\xi, \mathbf{P}_x)$ & Markov process issued from $x\in E$ whose semigroup is $\bP$ & \eqref{PtoP}\\
$(X, \mathbb{P}_\mu)$ & General $(\bP, G)$-MBP (and NBP) when issued from $\mu$ & \S \ref{MBP} (and \eqref{atomicvalued})\\
$(u_t, t\geq 0)$ & Non-linear semigroup of $X$ (and NBP) & \eqref{MBPnonlinear} (and \eqref{ut})\\
$\zeta$ &Lifetime of $X$ & \eqref{zeta}\\

$\varsigma(x)$ & Instantaneous branching rate of $X$ at $x\in E$& \S \ref{MBP}\\
$\mathcal{P}_x$  & Offspring law  of $X$ when parent at  $x\in E$ (and for NBP)& a. \eqref{PtoP} (and \eqref{Erv})\\
$G$ & Branching generator (and for NBP) & \eqref{similar111} (and \eqref{G})\\
$(x_i, i = 1,\cdots, N)$& Position and number of offspring positions  of a family in $X$ & \S \ref{MBP}\\
$w(x)$ (resp.  $p(x)$)& Prob. extinction (resp. surivival) when issued from $x\in E$&  \eqref{wdef} (resp.  \eqref{pdef})\\
&&\\
\hline
&&\\
$(X^\downarrow, \mathbb{P}^\downarrow_\mu)$ &MBP conditioned to die out and law when issued from $\mu$ & Th. \ref{skeleton} (i)\\
$(u^\downarrow_t, t\geq 0)$ &Non-linear semigroup of $X^\downarrow$ & \eqref{rednonlinearNTE}, \eqref{udown}\\

$\bP^\downarrow$ and $\hat\bP^\downarrow$ & Markov semigroup  associated to $X^\downarrow$ on $E$ and $E\cup\{\dagger\}$ resp. &Th. \ref{skeleton} (i)\\
$(\xi, \mathbf{P}_x^\downarrow)$ & Markov process  associated to $\bP^\downarrow$ issued from $x\in E$&\eqref{COMdown}\\
$\bL^\downarrow$ & Generator associated to $\bP^\downarrow$ in the setting of NBP &\eqref{Ldown}\\
$\varsigma^\downarrow(x)$ & Instantaneous branching rate of $X^\downarrow$ at $x\in E$&  \eqref{redbrate}  \\
$\mathcal{P}^\downarrow_x$ &Offspring law  of $X^\downarrow$ when parent at  $x\in E$ & \eqref{COMcalP}\\
$G^\downarrow$  & Branching generator of $X^\downarrow$  
&\eqref{redbmech} \\
$(x^\downarrow_i, i = 1,\cdots, N^\downarrow)$& Position and number of offspring positions  of a family in  $X^\downarrow$& Th. \ref{skeleton} (ii)\\

&&\\
\hline
&&\\
$(X^\uparrow, \mathbb{P}^\uparrow_\mu)$ &Skeleton MBP ($X$ conditioned to survive) when issued from $\mu$ & Th. \ref{skeleton} (ii)\\
$(X^\updownarrow, \mathbb{P}^\updownarrow_\mu)$ &Skeleton $X^\uparrow$ dressed with $X^\downarrow$ trees when issued from $\mu$ & Th. \ref{skeleton} (ii), \eqref{updownCOM}\\
$(u^\updownarrow_t, t\geq 0)$ &Non-linear semigroup of $X^\updownarrow$ & \eqref{uupdown},  \eqref{updownevolve}\\
$\bP^\uparrow$ & Markov semigroup  associated to $X^\uparrow$  &Th. \ref{skeleton} (ii)\\
$(\xi, \mathbf{P}_x^\uparrow)$ & Markov process  associated to $\bP^\uparrow$ issued from $x\in E$&\eqref{pmg}\\
$\bL^\uparrow$ & Generator associated to $\bP^\uparrow$ in the setting of NBP &\eqref{Lup}\\
 $\varsigma^\updownarrow(x)$ & Instantaneous branching rate of $X^\uparrow$ and $X^\updownarrow$ at $x\in E$&   \eqref{bluebrate} \\
$\mathcal{P}^\updownarrow_x$ &Joint $\uparrow$ and $\downarrow$ offspring law of $X^\updownarrow$ when parent at  $x\in E$ & \eqref{PcalupdownCOM}\\

 $G^\uparrow$ & Branching generator of $X^\uparrow$
& \eqref{bluebmech} \\
$(x^\uparrow, i = 1,\cdots, N^\uparrow)$& Position and number of offspring positions  of a family in  $X^\uparrow$& Th. \ref{skeleton} (ii)\\
$G^\updownarrow$ & Joint branching generator of $\uparrow$-type and $\downarrow$-type in $X^\updownarrow$  
& \eqref{Gupdown0}, \eqref{Gupdown}\\
&&\\
\hline
\end{tabular}
\label{table-notation}
\end{table}

\end{document}